\newcounter{theorem}
\newtheorem{theorem}[theorem]{Theorem}
\newtheorem{lemma}[theorem]{Lemma}
\newtheorem{prop}[theorem]{Proposition}
\newtheorem{cor}[theorem]{Corollary}
\theoremstyle{definition}
\newtheorem{defn}[theorem]{Definition}
\newtheorem{problem}[theorem]{Problem}
\theoremstyle{remark}
\newtheorem*{remark*}{Remark}
\newtheorem{rmk}[theorem]{Remark}
\numberwithin{equation}{section}
\newcommand{\Z}{\mathcal Z}
\newcommand{\C}{\mathrm{C}^*}
\newcommand{\N}{\mathbb{N}}
\newcommand{\M}{\mathbb{M}}
\newcommand{\dimnuc}{\dim_{\mathrm{nuc}}}
\newcommand{\id}{\mathrm{id}}
\newcommand{\alg}{\mathrm{alg}}
\newcommand{\Aff}{\mathrm{Aff}}
\newcommand{\im}{\operatorname{im}}
\newcommand{\au}{\operatorname{au}}
\newcommand{\Tor}{\operatorname{Tor}}
\newcommand{\ev}{\operatorname{ev}}
\newcommand{\KK}{\operatorname{KK}}
\newcommand{\Th}{\operatorname{Th}}
\newcommand{\Hom}{\operatorname{Hom}}
\title[]{On topologically zero-dimensional morphisms}
\author[J.\ Castillejos]{Jorge Castillejos}
\address{\hskip-\parindent Jorge Castillejos, Instituto de Matem\'aticas, Unidad Cuernavaca, Universidad Nacional Aut\'onoma de M\'exico, 
	 Cuernavaca 62210, Morelos, México.}
\email{jorge.castillejos@im.unam.mx}
\author[R.\ Neagu]{Robert Neagu}
\address{\hskip-\parindent Robert Neagu, Mathematical Institute, University of Oxford, Oxford, OX2 6GG, UK.}
\email{robert.neagu@maths.ox.ac.uk}
\thanks{The first named author was supported by UNAM--PAPIIT IA103124. The second named author was supported by the EPSRC grant EP/R513295/1.}
\begin{document}
	
	\maketitle
	
	\begin{abstract}
		We investigate $^*$-homomorphisms with nuclear dimension equal to zero. 
		In the framework of classification of $^*$-homo-morphisms, we characterise such maps as those that can be approximately factorised through an AF-algebra.
		
		Along the way, we obtain various obstructions for the total invariant of zero-dimensional morphisms and show that in the presence of real rank zero, nuclear dimension zero can be completely determined at the level of the total invariant. We end by characterising when unital embeddings of $\mathcal{Z}$ have nuclear dimension equal to zero.
	\end{abstract}

\numberwithin{theorem}{section}	
	
 \section*{Introduction}
\renewcommand*{\thetheorem}{\Alph{theorem}}
	
Nuclear dimension, introduced by Winter and Zacharias in \cite{nucdim}, is a non-commutative generalisation of covering dimension for topological spaces to $\C$-algebras. 
It is now known that, for simple separable nuclear $\C$-algebras, finite nuclear dimension is equivalent to tensorial absorption of the Jiang-Su algebra $\mathcal{Z}$ (\cite{Win, T, CPOU, CE}). In fact,
the nuclear dimension of a simple $\C$-algebra can only be zero, one or infinite (\cite{CPOU,CE}).
This revolutionary notion played a major role in the classification programme of simple separable unital nuclear $\C$-algebras. 

In recent years, it has become an important task to understand not only regularity properties of $\C$-algebras but also of maps between them. 
This is in part due to the fact that the
proof of many classification results can be divided, roughly speaking, in two important steps usually called ``existence" and ``uniqueness". 
Such steps focus on classifying maps rather than $\C$-algebras;
but by combining them, one can obtain classification results for the algebras themselves.

Nuclear dimension of $^*$-homomorphisms is a natural regularity condition to investigate.
Since nuclear dimension of a $\C$-algebra is defined by means of an approximation property for the identity map, it is immediate how to generalise this notion to $^*$-homomorphisms;  
a line of research that was initiated in \cite{decomprankZstable}. More recently,
Bosa, Gabe, Sims, and White showed that if $A$ is separable and exact, then any nuclear $\mathcal{O}_\infty$-stable $^*$-homomorphism $\theta:A\to B$ has nuclear dimension at most one ({\cite[Theorem C]{nucdimOinftystable}}).
On the other hand, from the main result of \cite{CPOU}, it follows that
if $\theta:A\to B$ is a 
$^*$-homomorphism 
where the domain or codomain is a simple nuclear $\Z$-stable $\C$-algebra,
then the nuclear dimension of $\theta$ is at most one.
Based on this, it is natural to investigate criteria that will allow us to determine when the nuclear dimension of $^*$-homomorphisms between classifiable $\C$-algebras is zero or one.
In \cite{nucdimOinftystable}, the authors pose the following problem.

\begin{problem}[{\cite[Problem 7.3]{nucdimOinftystable}}]\label{theproblem}
Characterise $^*$-homomorphisms with nuclear dimension equal to zero.
\end{problem}

A very recent work shows that under some regularity conditions, unital $^*$-homomorphisms are classified by a new invariant called the \emph{total invariant} \cite{classif}. 
Our objective in this paper will be to analyse Problem \ref{theproblem} for $^*$-homomorphisms covered by the classification theorem in \cite{classif}. We will describe the invariant of classifiable $^*$-homomorphisms with nuclear dimension equal to zero. Then, under some additional hypotheses, we can completely characterise the property of having nuclear dimension equal to zero at the level of the invariant.

Early in the theory, Winter established that a $\C$-algebra has nuclear dimension equal to zero if and only if it is \emph{approximately finite dimensional} (AF) {\cite[Theorem 4.2.3]{Win03}}.
For $^*$-homomorphisms, it is readily seen that a $^*$-homomorphism factoring through an approximately finite dimensional $\C$-algebra has nuclear dimension equal to zero. \emph{Does this condition characterise all zero-dimensional morphisms up to some notion of equivalence?}

The main result of this paper states that this previous condition characterises classifiable zero-dimensional $^*$-homomorphisms $A \to B$ when viewed as maps into the sequence algebra $B_\infty$.

\begin{theorem}\label{factoringintoBinfty}
Let $A$ and $B$ be simple separable unital and nuclear $\C$-algebras such that $A$ is stably finite and satisfies the UCT and  $B$ is $\Z$-stable.
If $\theta:A\to B$ is a unital $^*$-homomorphism and $\iota_B:B\to B_\infty$ is the canonical diagonal inclusion, then the following are equivalent:
\begin{enumerate}
    \item There exist a simple separable unital AF-algebra $C$ and unital $^*$-homomorphisms $\psi: A \to C$ and $\varphi: C \to B_\infty$ such that
    $\iota_B \circ \theta = \varphi \circ \psi$;
    
    \item $\theta$ has nuclear dimension equal to zero.\label{factoring32}
\end{enumerate}
\end{theorem}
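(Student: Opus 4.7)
\medskip

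\noindent\emph{The direction \textup{(i)} $\Rightarrow$ \textup{(ii)}.} Since $C$ is AF, its identity is approximated by completely positive contractive (c.p.c.) order-zero maps through finite-dimensional algebras; pre-composing with $\psi$ and post-composing with $\varphi$ yields such approximations for $\varphi\circ\psi = \iota_B\circ\theta$, so $\iota_B\circ\theta$ has nuclear dimension zero as a $^*$-homomorphism $A \to B_\infty$. Because $\iota_B$ is the diagonal embedding, a standard reindexing/diagonal argument (in the style of Kirchberg's $\varepsilon$-test) lifts $B_\infty$-valued c.p.c.\ order-zero approximations to $B$-valued approximations of $\theta$ itself, so $\dimnuc(\theta) = 0$.

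\medskip

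\noindent\emph{The direction \textup{(ii)} $\Rightarrow$ \textup{(i)}.} This is the substantive direction. My plan is to reduce the factorisation problem to one at the level of the total invariant and then invoke the classification of unital $^*$-homomorphisms from \cite{classif}. The steps are:
\begin{enumerate}
\item Identify the obstructions that $\dimnuc(\theta) = 0$ imposes on the total invariant of $\theta$. I expect $K_1(\theta)$ to vanish (since any factorisation through an AF algebra kills $K_1$), and the full Bockstein and tracial data of $\theta$ to be compatible with an AF factorisation; these are the ``obstructions for the total invariant'' flagged in the abstract.
\item Using Elliott's classification of AF algebras, construct a simple separable unital AF-algebra $C$ whose total invariant sits between those of $A$ and of $B_\infty$, together with invariant-level morphisms whose composition matches the invariant of $\iota_B\circ\theta$.
\item Apply the existence half of the classification in \cite{classif} to realise these invariant-level morphisms as honest unital $^*$-homomorphisms $\psi: A \to C$ and $\varphi: C \to B_\infty$.
\item Apply the uniqueness half of classification to conclude that $\varphi\circ\psi$ is approximately unitarily equivalent to $\iota_B\circ\theta$, and then absorb the equivalence into $\varphi$ via conjugation by a single unitary in $B_\infty$ (using separability of $A$ and a diagonal argument) to upgrade to strict equality.
\end{enumerate}

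\medskip

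\noindent\emph{Main obstacle.} The crux is step (1): pinning down exactly which features of the total invariant are forced by $\dimnuc(\theta) = 0$, and showing that these features permit an AF splitting. This is delicate because it requires connecting a purely analytic approximation property with algebraic and tracial data, and is where the stably finite UCT hypothesis on $A$, $\Z$-stability of $B$, and the passage to $B_\infty$ (which supplies the real-rank-zero-like regime hinted at in the abstract) should all enter in an essential way. The existence and uniqueness steps (3)--(4) then reduce to black-box applications of \cite{classif}, provided the invariant-level data in (2) has been correctly arranged.
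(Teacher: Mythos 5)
Your overall architecture --- extract invariant-level consequences of $\dimnuc\theta=0$, interpolate an AF-algebra at the level of the invariant, then apply the existence and uniqueness halves of \cite{classif} and absorb the resulting unitary equivalence into $\varphi$ --- is exactly the paper's strategy, and your treatment of (i)$\Rightarrow$(ii) matches the paper (which simply quotes \cite[Proposition 2.5]{decomprankZstable} for the descent from $B_\infty$ to $B$). However, there is a genuine gap at the centre of (ii)$\Rightarrow$(i): your step (2) asserts that one can ``construct a simple separable unital AF-algebra $C$ whose total invariant sits between those of $A$ and of $B_\infty$'' but gives no mechanism for doing so, and no abstract splitting of the invariant of $\iota_B\circ\theta$ will produce one. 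The paper's construction is concrete and goes through the codomain-side finite-dimensional approximations: first, $\dimnuc\theta=0$ is upgraded (Lemmas \ref{approxmultmaps} and \ref{homupwardmap}) to an \emph{exact} factorisation $\iota_B\circ\theta=\eta\circ\sigma$ through a product $\prod_\infty F_k$ of finite dimensional algebras with both $\sigma$ and $\eta$ unital $^*$-homomorphisms; then one shows $K_0\bigl(\prod_\infty F_k\bigr)$ is an unperforated Riesz group with property $(D)$ (Proposition \ref{K0ultraprod}, which needs the matrix blocks to grow, i.e.\ Proposition \ref{prop: largematrixblocks}); then one cuts down to a \emph{separable} $\C$-subalgebra $E\supseteq\sigma(A)$ retaining these properties via Blackadar's separably inheritable properties (Propositions \ref{sepinh}, \ref{Dsepinh}, \ref{dimgroup}); and finally one re-orders $K_0(E)$ by taking the positive cone to be the order units together with $0$, obtaining a \emph{simple} dimension group realised as $K_0(C)$ for a simple unital AF-algebra $C$ (Proposition \ref{prop:AFalgebra.compatibility}). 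None of these steps appears in your plan, and the last one explains why the factorisation only lands in $B_\infty$: the map $K_0(C)\to K_0\bigl(\prod_\infty F_k\bigr)$ is positive only because every order unit is positive, so there is no way in general to push $\varphi$ down to $B$.

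A second, smaller omission: for the final uniqueness comparison you also need $\overline{K_1}^{\alg}(\iota_B\circ\theta)=0$, which is not formal; the paper proves it (Proposition \ref{algK1map}) by showing that $\overline{K_1}^{\alg}\bigl(\prod_\infty F_k\bigr)=0$, i.e.\ that the pairing map has uniformly dense image in $\Aff\bigl(T\bigl(\prod_\infty F_k\bigr)\bigr)$, again using the growth of the matrix blocks. So the ``obstructions on the total invariant'' in your step (1) are not merely $K_1(\theta)=0$ and Bockstein compatibility but include this Hausdorffised algebraic $K_1$ vanishing, and establishing it already requires the same quantitative control of the finite-dimensional approximations that drives the construction of $C$. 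As written, your proposal defers precisely the content of the proof to its acknowledged ``main obstacle'' without supplying the idea that resolves it.
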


In the absence of torsion in $K$-theory, a strengthening of this result can be obtained by assuming \emph{real rank zero} on the domain or codomain of the morphisms.
In either of these situations, 
we can show that the zero-dimensional $^*$-homomorphisms are precisely those which are approximately unitarily equivalent to those factoring through a simple separable unital AF-algebra; and furthermore, this can be identified at the level of the invariant.

\begin{theorem}\label{rr0codomain}
Let $A$ and $B$ be simple separable unital and nuclear $\C$-algebras such that $A$ is stably finite and satisfies the UCT and $B$ is $\Z$-stable of real rank zero with $T(B) \neq \emptyset$ and $K_0(B)$ is torsion free.
Let $\theta:A\to B$ be a unital $^*$-homomorphism. Then the following are equivalent:
\begin{enumerate}

	\item There exist a simple separable unital AF-algebra $C$ and unital $^*$-homomorphisms $\psi: A \to C$ and $\varphi: C \to B$ such that $\theta$ is approximately unitarily equivalent to $\varphi \circ \psi$;\label{factoring1}

    \item $\theta$ has nuclear dimension equal to zero; \label{factoring2}

    \item $K_1(\theta)=0$ and $K_1(\theta;\mathbb{Z}/n)=0$ for all $n\in\mathbb{N}$.\label{factoring3}
\end{enumerate}
\end{theorem}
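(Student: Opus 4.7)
My plan is to establish the three implications (i) $\Rightarrow$ (ii) $\Rightarrow$ (iii) $\Rightarrow$ (i), with Theorem \ref{factoringintoBinfty} and the classification of unital $*$-homomorphisms from \cite{classif} as the principal tools.

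The two forward implications should be comparatively direct. For (i) $\Rightarrow$ (ii), any $*$-homomorphism factoring through an AF-algebra has nuclear dimension zero, and nuclear dimension of a $*$-homomorphism is preserved under approximate unitary equivalence. For (ii) $\Rightarrow$ (iii), I would appeal to the $K$-theoretic obstructions for zero-dimensional morphisms developed earlier in the paper. Concretely, Theorem \ref{factoringintoBinfty} supplies a factorisation $\iota_B \circ \theta = \varphi \circ \psi$ through a simple unital AF-algebra $C$; since $K_1(C) = 0$ and $K_0(C)$ is torsion free, the universal coefficient sequence gives $K_1(C; \mathbb{Z}/n) = 0$, and the $\Z$-stability of $B$ ensures that $\iota_B$ is injective on total $K$-theory, forcing $K_1(\theta) = 0 = K_1(\theta; \mathbb{Z}/n)$.

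The substantive implication is (iii) $\Rightarrow$ (i). The strategy is to produce an honest AF-factorisation with the same total invariant as $\theta$ and then invoke the uniqueness half of the classification. First, Elliott's classification of AF-algebras yields a simple separable unital AF-algebra $C$ realising the Elliott invariant $(K_0(B), K_0(B)^+, [1_B], T(B))$: simplicity of $B$ makes $K_0(B)$ a simple dimension group, and torsion-freeness of $K_0(B)$ guarantees that it does arise from an AF-algebra. Next, using real rank zero of $B$ together with existence results for AF-embeddings into real-rank-zero classifiable targets (either from \cite{classif} or Lin-type results), I would construct a unital $*$-homomorphism $\varphi: C \to B$ inducing the identity on $K_0$ and on traces. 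The existence theorem for maps into AF-algebras then provides a unital $\psi: A \to C$ realising $K_0(\psi) = K_0(\varphi)^{-1} \circ K_0(\theta)$ with the matching trace map. Comparing invariants of $\theta$ and $\varphi \circ \psi$: they agree on $K_0$ and traces by construction, while $K_1$ and $K_1(-;\mathbb{Z}/n)$ both vanish (by hypothesis for $\theta$, and because $K_1(C) = 0 = K_1(C;\mathbb{Z}/n)$ for $\varphi \circ \psi$). The uniqueness theorem from \cite{classif} then yields $\theta \sim_{\mathrm{a.u.}} \varphi \circ \psi$.

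The main obstacle will be verifying that the full total invariant from \cite{classif}, which may include finer pieces such as a Hausdorffised algebraic $K_1$ or a refined $K_0$--trace pairing, really is captured by the data matched above. The hypotheses are calibrated for this: real rank zero forces the refined invariant to be governed by projection data, while torsion-freeness of $K_0(B)$ eliminates any $\Tor$-contributions in the coefficient sequences of the total $K$-theory. A secondary but still delicate technical point is arranging $\varphi: C \to B$ to be unital with the prescribed $K_0$-map; here too the real rank zero hypothesis on $B$ is essential, via standard existence machinery for AF-subalgebras of real-rank-zero classifiable targets.
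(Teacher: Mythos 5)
Your overall strategy coincides with the paper's: (i)$\Rightarrow$(ii)$\Rightarrow$(iii) are handled exactly as you indicate, and for (iii)$\Rightarrow$(i) the paper likewise builds a simple unital infinite dimensional AF-algebra $C$ with $(K_0(C),K_0(C)_+,[1_C]_0)\cong(K_0(B),K_0(B)_+,[1_B]_0)$ and $T(C)\cong T(B)$ (Lemma \ref{lem: AFskeletonRR0}), produces $\psi:A\to C$ and $\varphi:C\to B$ by the existence half of classification, and concludes by uniqueness; real rank zero collapses $\overline{K_1}^{\alg}$ to $K_1$ and makes the extra pairing diagram automatic, exactly as you anticipate (Lemma \ref{lemma : classifthmrr0}).

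There is, however, one concrete point your comparison of invariants skips: the $K_0(\cdot\,;\mathbb{Z}/n)$ component. To define the map $K_0(A;\mathbb{Z}/n)\to K_0(C;\mathbb{Z}/n)$ you want to use $K_0(\theta;\mathbb{Z}/n)$, but $K_0(C;\mathbb{Z}/n)\cong K_0(B)\otimes\mathbb{Z}/n$ is in general a \emph{proper} subgroup of $K_0(B;\mathbb{Z}/n)$: the relevant coefficient sequence has quotient $\Tor(K_1(B),\mathbb{Z}/n)$, and nothing in the hypotheses makes $K_1(B)$ torsion free, so your remark that torsion-freeness of $K_0(B)$ eliminates the Tor-contributions is aimed at the wrong exact sequence. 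What rescues the construction is hypothesis (iii) itself: since $K_1(\theta)=0$, naturality of the coefficient sequences forces the image of $K_0(\theta;\mathbb{Z}/n)$ into $K_0(B)\otimes\mathbb{Z}/n$ (Proposition \ref{K0coeffmap}), so $K_0(\theta;\mathbb{Z}/n)$ corestricts to $K_0(C;\mathbb{Z}/n)$ and the final comparison $\underline{K}(\theta)=\underline{K}(\varphi\circ\psi)$ goes through. A smaller imprecision: simplicity alone does not make $K_0(B)$ a dimension group; the Riesz interpolation comes from real rank zero (Riesz decomposition for projections) together with cancellation, while torsion-freeness together with strict comparison gives unperforation.
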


A somewhat similar result is Theorem \ref{rr0domain} where the condition of real rank zero is imposed on the domain of the morphism.

\medskip
\subsection*{\sc Discussion of methods}\hfill \\

One natural starting point is trying to mimic Winter's proof in \cite{Win03} in the unital case. Loosely speaking, it goes as follows: using an approximating system witnessing nuclear dimension equal to zero, we would like to produce an increasing family $(F_n)_{n\in\mathbb{N}}$ of finite dimensional $\C$-subalgebras of $A$ with dense union.
Let us suppose we have already found $F_n$ and want to obtain a larger finite dimensional $\C$-algebra $F_{n+1}$ approximately containing a finite subset $\mathcal{F}$ of  $A$ up to a given $\epsilon>0$. Nuclear dimension equal to zero and a perturbation argument will give a finite dimensional $\C$-algebra $F$ and a unital $^*$-homomorphism $\phi:F\to A$ such that $\phi(F)$ approximates $\mathcal{F}$ up to $\epsilon$. To make sure $F_{n+1}$ contains $F_n$, we can assume that the matrix units of $F_n$ are contained in the finite set $\mathcal{F}$. Following the proof of {\cite[Theorem 4.2.3]{Win03}}, we can obtain $F_{n+1}$ starting from $\phi(F)$. 

However, when dealing with a general  $^*$-homomorphism $\theta:A\to B$ with nuclear dimension equal to zero, a ``double approximation issue" arises if we try to adopt the same strategy. 
Precisely, the image of the upward approximating maps $\phi(F)$ will possibly lie outside the image of $\theta$ in the bigger $\C$-algebra $B$. Therefore, the strategy employed by Winter will produce a finite dimensional $\C$-algebra $F_{n+1}$ which generally will not be contained in the image of $\theta$. Then, we cannot ensure in the next step that we can include the necessary matrix units in the finite set to be approximated. 
Summarising, we can obtain that for any finite subset $\mathcal{F}$ of $A$ and any $\epsilon>0$, there exists a finite dimensional $\C$-algebra $F$ in $B$ approximating $\theta(\mathcal{F})$ up to $\epsilon$, but we cannot a priori produce any sort of increasing sequence of finite dimensional $\C$-algebras; and hence, no AF-algebra within $B$. 

Instead, we propose a different approach to this problem.\ By a careful analysis at the level of K-theory of the approximations witnessing nuclear dimension equal to zero, we will produce a simple separable unital AF-algebra. Then, after analysing zero-dimensional $^*$-homomorphisms at the level of the total invariant, we will make use of {\cite[Theorem 9.1]{classif}} and {\cite[Theorem 9.3]{classif}} to produce a factorisation of such morphisms via the AF-algebra constructed in the previous step.

The paper is organised as follows.
Section \ref{preliminaries} reviews the definition of nuclear dimension together with some standard simplifications in the case when $\theta:A\to B$ is a unital $^*$-homomorphism with nuclear dimension equal to zero. The last part of Section \ref{preliminaries} collects some facts about the total invariant which are going to be used in the proof of the main results via the classification theorems {\cite[Theorem 9.1]{classif}} and {\cite[Theorem 9.3]{classif}}. Section \ref{ultraprod} then focuses on properties of sequence algebras of finite dimensional $\C$-algebras. The main goal is Proposition \ref{dimgroup}, which shows that if the dimension of the matrix blocks grows fast enough, we can obtain a non-elementary dimension group that can be mapped into the $K_0$-group of the sequence algebra. This is going to provide the key ingredient in the proof of Theorem \ref{factoringintoBinfty}. We then change gears in Section \ref{zerodim} and collect a number of properties of morphisms with nuclear dimension equal to zero at the level of their invariant. Combining the results in Section \ref{ultraprod} and Section \ref{zerodim}, we prove our main results in Section \ref{mainresults}. We finish this paper in Section \ref{embZ} by providing characterisations of unital embeddings of $\Z$ with nuclear dimension equal to zero.

\subsection*{Acknowledgements} 
We thank Stuart White for suggesting this problem and for helpful comments that improved the paper. We also thank the authors of \cite{classif} for kindly sharing a preliminary version of their paper. 
We thank the anonymous referee and Mikkel Munkholm for their suggestions on an earlier version of this paper. 
This article is part of the PhD thesis of the second named author.
	
\section{Preliminaries}\label{preliminaries}

\numberwithin{theorem}{section}

\subsection{Notation}
In the sequel, 
$\M_n$ will denote the $\C$-algebra of $n\times n$-matrices with coefficients in $\mathbb{C}$. In general,
$A$ and $B$ will stand for $\C$-algebras, often assumed to be separable. Then, we will write $T(A)$ for the set of tracial states of $A$ and $QT(A)$ for the set of lower semicontinuous $2$-quasitraces.
The space of all affine functions on $T(A)$ will be denoted by $\Aff(T(A))$.
On the other hand, if $G$ is an ordered abelian group, then we will write $S(G)$ to denote the state space of $G$.
We will write cp and cpc for completely positive and completely positive contractive, respectively. 
Sometimes we write $a \approx_\epsilon b$ to denote $\|a - b \| < \epsilon$.

	\subsection{Sequence algebras}
	
Throughout the paper, we will often go back and forth between approximate sentences and exact ones. One of the key tools to encode approximate behaviour is the use of sequence algebras. Given a sequence $(B_n)_{n \in \mathbb{N}}$ of $\C$-algebras, the induced \emph{sequence algebra} is the quotient of
the bounded sequences by the null sequences. This will be denoted by $\prod_{n=1}^\infty B_n/\bigoplus_{n=1}^\infty B_n$ or $\prod\limits_{\infty}B_n$ for simplicity of the notation.
In the special, but relevant case where $B_n = B$ for all $n$, it is common to denote this sequence algebra by $B_\infty$. We will then view its elements as given by sequences with entries in $B$ and there is a canonical embedding $\iota_B:B\to B_\infty$ obtained by viewing each element in $B$ as a constant sequence in $B_\infty$. 

Moreover, for a sequence algebra $\prod\limits_{\infty}B_n$, we denote by $T_\infty\Big(\prod\limits_{\infty}B_n\Big)$ the set of \emph{limit traces} on $\prod\limits_{\infty}B_n$, i.e.\ those $\tau\in T\Big(\prod\limits_{\infty}B_n\Big)$ of the form $\tau((b_n)_{n \in \mathbb{N}})=\lim_{n\to\omega}\tau_n(b_n)$ for some sequence of traces $\tau_n\in T(B_n)$ and a free ultrafilter $\omega$ on $\mathbb{N}$. 

We will often want to transfer properties of the algebras $(B_n)_{n\in\mathbb{N}}$ to their sequence algebra. In particular, one fact we will need is that real rank zero passes to sequence algebras. A unital $\C$-algebra $A$ has \emph{real rank zero} if the invertible self-adjoint elements are dense among all self-adjoint elements. If $A$ is non-unital, we say it has real rank zero if its minimal unitisation has real rank zero \cite[Section 1]{brownpedersen}.

\begin{lemma}\label{lem:ultraproducts.real.ranz.zero}
	Let $(B_n)_{n\in \mathbb{N}}$ be a sequence of $\C$-algebras with real rank zero. Then $\prod\limits_{\infty} B_n$ has real rank zero.\footnote{This result actually holds for any filter on $\mathbb{N}$.}
\end{lemma}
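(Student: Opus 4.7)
\medskip

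The plan is to reduce to the well-known characterisation of real rank zero in terms of self-adjoint elements with finite spectrum: a (unital) $\C$-algebra $A$ has real rank zero if and only if every self-adjoint element of $A$ is a norm-limit of self-adjoint elements with finite spectrum. Assuming each $B_n$ is unital (the non-unital case can be handled by noting that the unitisation of $\prod_\infty B_n$ embeds into the sequence algebra of the unitisations $\widetilde{B_n}$, which have real rank zero by \cite[Section 1]{brownpedersen}), I will verify this finite-spectrum density in $\prod_\infty B_n$ directly and then upgrade to invertibility.

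Concretely, given a self-adjoint $x\in \prod_\infty B_n$ and $\varepsilon>0$, lift $x$ to a bounded sequence $(x_n)_{n\in\mathbb N}$ and replace it by $\bigl((x_n+x_n^{*})/2\bigr)_{n\in\mathbb N}$ so that every representative is self-adjoint. For each $n$, real rank zero of $B_n$ produces a self-adjoint $z_n\in B_n$ with finite spectrum satisfying $\|x_n-z_n\|<\varepsilon/2$. If $0\in\sigma(z_n)$, let $p_n\in B_n$ be the spectral projection of $z_n$ associated with the singleton $\{0\}$ (available via continuous functional calculus on the finite set $\sigma(z_n)$) and set $y_n:=z_n+(\varepsilon/2)\,p_n$. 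Otherwise set $y_n:=z_n$. In either case $y_n=y_n^{*}$, $\|x_n-y_n\|<\varepsilon$, and $0\notin \sigma(y_n)$ with $\mathrm{dist}\bigl(0,\sigma(y_n)\bigr)\ge \varepsilon/2$.

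The crucial observation is that these lower bounds are uniform in $n$, so $\|y_n^{-1}\|\le 2/\varepsilon$ for all $n$. Hence $(y_n)_{n\in\mathbb N}$ is bounded and the inverses $(y_n^{-1})_{n\in\mathbb N}$ are bounded as well, so the class $y\in\prod_\infty B_n$ is a self-adjoint invertible element with $\|x-y\|\le\varepsilon$. As $\varepsilon>0$ was arbitrary, self-adjoint invertibles are dense in the self-adjoint part of $\prod_\infty B_n$, proving the claim.

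The main technical point, and the only place the argument could go wrong if handled carelessly, is exactly this uniform control on $\|y_n^{-1}\|$: a naive appeal to real rank zero of each $B_n$ produces invertible approximants whose inverses might blow up with $n$, in which case the resulting sequence is bounded but not invertible in $\prod_\infty B_n$. The finite-spectrum characterisation together with the explicit $\varepsilon/2$ shift on the kernel spectral projection is precisely what gives the uniform separation of $\sigma(y_n)$ from $0$, and this is why the same proof also goes through for arbitrary filter-based quotients, as remarked in the footnote.
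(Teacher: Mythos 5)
Your overall strategy---reducing to the Brown--Pedersen characterisation of real rank zero via density of self-adjoint elements with finite spectrum, and then producing approximants whose spectra are uniformly separated from $0$---is a legitimate alternative to the paper's argument, which instead applies the projection criterion of \cite[Theorem 2.6]{brownpedersen} termwise to a pair of approximately orthogonal positive contractions and assembles the resulting projections into one projection of the sequence algebra. However, as written your construction does not deliver the uniform spectral gap you claim, and this is precisely the step you yourself single out as the crux. Adding $(\varepsilon/2)p_n$, where $p_n$ is the spectral projection of $z_n$ for the singleton $\{0\}$, only moves the spectral value that is \emph{exactly} $0$; it does nothing to spectral values of $z_n$ that are nonzero but very close to $0$. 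For instance, if $\sigma(z_n)=\{0,1/n,1\}$ then $\sigma(y_n)=\{\varepsilon/2,1/n,1\}$, so $\operatorname{dist}\bigl(0,\sigma(y_n)\bigr)=1/n\to 0$ and $\|y_n^{-1}\|\to\infty$; the sequence $(y_n)_{n\in\mathbb{N}}$ then fails to represent an invertible element of $\prod\limits_{\infty}B_n$. The same failure occurs in your ``otherwise'' branch, where $0\notin\sigma(z_n)$ but $\sigma(z_n)$ may still accumulate at $0$ as $n$ varies.

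The repair is short: since $\sigma(z_n)$ is finite, apply functional calculus with the function that fixes $t$ when $|t|\ge\varepsilon/2$ and sends each $t$ with $|t|<\varepsilon/2$ to $\pm\varepsilon/2$ according to its sign (with, say, $0\mapsto\varepsilon/2$), i.e.\ push the \emph{entire} portion of the spectrum lying in $(-\varepsilon/2,\varepsilon/2)$ out to distance $\varepsilon/2$, not just the point $0$. Then $\|z_n-f(z_n)\|\le\varepsilon/2$ and $\operatorname{dist}\bigl(0,\sigma(f(z_n))\bigr)\ge\varepsilon/2$ uniformly in $n$, and the rest of your argument (uniform boundedness of the inverses, hence invertibility in the sequence algebra, and validity for arbitrary filters) goes through. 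One further small point: real rank zero does not pass to arbitrary unital subalgebras, so embedding the unitisation of $\prod\limits_{\infty}B_n$ into $\prod\limits_{\infty}\widetilde{B_n}$ is not by itself enough in the non-unital case; you should either observe that $\prod\limits_{\infty}B_n$ is an ideal, hence a hereditary subalgebra, of $\prod\limits_{\infty}\widetilde{B_n}$, or follow the paper in citing \cite[Lemma 1.11]{CE} to reduce to the unital case at the outset.
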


	\begin{proof}
	We will heavily rely on \cite[Theorem 2.6]{brownpedersen} and observe that by \cite[Lemma 1.11]{CE} we can assume that each $B_n$ is unital. Let $x,y \in \prod\limits_{\infty} B_n$ be positive orthogonal contractions. Say the sequences $(a_n)_{n \in \mathbb{N}}$ and $(b_n)_{n \in \mathbb{N}}$ in $\prod\limits_{n=1}^\infty B_n$ represent $x$ and $y$, respectively.\ Let $\epsilon>0$ and $N \in \N$ such that 
	$\|a_n b_n\| < \epsilon^2$ for all $n>N$.
	Since each $B_n$ has real rank zero, \cite[Theorem 2.6]{brownpedersen} yields the existence of a projection $p_n \in B_n$ such that 
	\begin{equation}
		\notag
		\|(1-p_n)a_n\| < \epsilon \qquad \text{and} \qquad \|p_n b_n\| < \epsilon \qquad \text{with} \quad n>N.
	\end{equation}
	Set $p_n := 0$ if $n\leq N$.
	It then follows that the sequence $(p_n)_{n \in \mathbb{N}}$ induces a projection $p\in \prod\limits_{\infty} B_n$ such that 
	\begin{equation}
		\| (1-p) x \| \leq \epsilon \qquad \text{and} \qquad \| py \| \leq \epsilon. \notag
	\end{equation}	
	Again, by \cite[Theorem 2.6]{brownpedersen}, $\prod\limits_{\infty} B_n$ has real rank zero.
\end{proof}

\subsection{Nuclear dimension}
	
Before defining finite nuclear dimension for $^*$-homomorphisms, we need to recall the definition of order zero maps. A cp map $\phi: A\to B$ between $\C$-algebras is said to be
\emph{order zero} if, for every $a,b \in A_+$ with $ab=0$, one has $\phi(a)\phi(b)=0$. The general theory for these maps was developed in \cite{orderzero}.

\begin{defn}[\cite{nucdim,decomprankZstable}]\label{defdim}
	Let $A$ and $B$ be $\C$-algebras with $A$ separable and $\theta:A\to B$ a $^*$-homomorphism. It is said that $\theta$ has \emph{nuclear dimension at most $n$}, for some $n\in\mathbb{N}$ and denoted by $\dimnuc \theta\leq n$,  if for any $k\in \mathbb{N}$, there exist $F_k=F_k^{(0)}\oplus \ldots \oplus F_k^{(n)}$ finite dimensional $\C$-algebras and  maps $\psi_k:A\to F_k$ and $\eta_k:F_k\to B$ such that 

\begin{enumerate}
    \item $\|\eta_k\circ\psi_k(a)-\theta(a)\| \to 0$ for all $a\in A$,
    \item $\psi_k$ is cpc,
    \item if we denote by $\eta_k^{(i)}$ the restriction of $\eta_k$ to $F_k^{(i)}$, then $\eta_k^{(i)}$ is a cpc order zero map for all $i=0,\ldots, n$.
\end{enumerate}
If additionally each $\eta_k$ is contractive, then it is said that $\theta$ has \emph{decomposition rank} at most $n$.
\end{defn}

The $^*$-homomorphism $\theta$ has nuclear dimension equal to $n$ if $n$ is the minimum number for which we can find $(F_k,\psi_k,\eta_k)_{k\in\mathbb{N}}$ as above. In this case, we call $(F_k,\psi_k,\eta_k)_{k\in\mathbb{N}}$ an \emph{$n$-decomposable approximating system} for $\theta$.\ If no such $n$ exists, then $\theta$ has infinite nuclear dimension.\ In fact, in the case when a $^*$-homomorphism $\theta$ is unital and has nuclear dimension equal to zero, we can say more about the approximating system given by Definition \ref{defdim}. 
Essentially by definition, in this situation $\theta$ also has decomposition rank equal to zero. As in the case of finite decomposition rank for $\C$-algebras {\cite[Proposition 5.1]{decomprank}}, we can assume that the downward approximating maps are approximately multiplicative. We will record this result for later use.

\begin{lemma}{\cite[Proposition 1.7]{nucdimOinftystable}}\label{approxmultmaps}
Let $\theta:A\to B$ be a $^*$-homomorphism between $\C$-algebras with $A$ separable and $\dimnuc\theta=0$. Then there exists an $n$-decomposable approximating system
$(F_k,\psi_k,\eta_k)_{k\in\N}$ for $\theta$ such that the maps $\psi_k$ are approximately multiplicative, i.e.
the induced map $\psi\colon A\rightarrow\prod\limits_{\infty}F_k$ is a $^*$-homomorphism.
\end{lemma}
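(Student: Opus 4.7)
The plan is to start from an arbitrary $0$-decomposable approximating system $(F_k,\psi_k,\eta_k)_{k\in\N}$ for $\theta$ and perturb it in the sequence algebra using the Winter--Zacharias structure theorem for cpc order zero maps. Recall that for each $k$, the cpc order zero map $\eta_k:F_k\to B$ factors as $\eta_k(x)=h_k\pi_k(x)$, with $h_k=\eta_k(1_{F_k})$ a positive contraction commuting with the image of a $^*$-homomorphism $\pi_k:F_k\to M(\C(\eta_k(F_k)))$. The strategy is to leverage the fact that $\theta$ is multiplicative to force $h_k$ to behave, in the limit, like a projection on the support relevant to $\psi_k(A)$; the supporting $^*$-homomorphisms $\pi_k$ will then effectively replace $\eta_k$ in the approximation and pass multiplicativity upstream to the $\psi_k$.

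Concretely, I would pass to the sequence algebra: the induced cpc map $\Psi=[\psi_k]:A\to\prod\limits_{\infty} F_k$ and the cpc order zero map $\mathrm{H}=[\eta_k]:\prod\limits_{\infty} F_k\to B_\infty$ satisfy $\mathrm{H}\circ\Psi=\iota_B\circ\theta$. Writing $\mathrm{H}(x)=h\,\Pi(x)$ via the structure theorem, the $^*$-homomorphism property of $\iota_B\circ\theta$ gives, for all $a,b\in A$,
\[
h\,\Pi(\Psi(ab))=\mathrm{H}(\Psi(ab))=\mathrm{H}(\Psi(a))\mathrm{H}(\Psi(b))=h^2\,\Pi(\Psi(a)\Psi(b)).
\]
Applied to an approximate unit of $A$, this identity shows that $h$ acts as a projection on the support of $\Pi\circ\Psi$; consequently $\mathrm{H}$ and $\Pi$ coincide on $\Psi(A)$, and the displayed equality collapses to $\Pi(\Psi(ab))=\Pi(\Psi(a)\Psi(b))$. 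That is, after quotienting by $\ker\Pi$ inside $\prod\limits_{\infty} F_k$, $\Psi$ becomes a genuine $^*$-homomorphism.

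To transfer this back to finite-dimensional approximants, I would choose a nested exhaustion $(\mathcal{F}_k)_{k\in\N}$ of $A$ by finite sets and, for each $k$, apply continuous functional calculus to $h_k$ to extract a positive contraction $g_k\in\C(h_k)$ which is close to the support projection on $\pi_k(\psi_k(\mathcal{F}_k))$. Composing with a suitable compression of $\pi_k$ onto the matrix blocks of $F_k$ on which $g_k$ acts essentially as the identity yields a genuine $^*$-homomorphism $\tilde{\pi}_k:\tilde{F}_k\to B$ and a cpc map $\tilde{\psi}_k:A\to\tilde{F}_k$ with $\tilde{\pi}_k\circ\tilde{\psi}_k\to\theta$ pointwise. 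Since $\tilde{\pi}_k$ is multiplicative and $\theta$ is a $^*$-homomorphism, this automatically forces $\tilde{\psi}_k$ to be approximately multiplicative on $\mathcal{F}_k$, and a standard diagonal/reindexing argument produces the required approximating system.

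The main obstacle is this descent step: the supporting $^*$-homomorphism $\Pi$ a priori lives in an abstract multiplier algebra associated with the cpc order zero map $\mathrm{H}$, not in $B_\infty$ itself, so one must carefully combine the spectral analysis of $h_k$ with compressions to produce honest $^*$-homomorphisms from finite-dimensional subalgebras into $B$ while preserving the approximation to $\theta$. In the non-unital setting this is more delicate still, as one needs to track a contractive approximate unit of $A$ to ensure the support projections correctly absorb the image of $\theta$.
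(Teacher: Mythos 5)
The paper does not actually prove this lemma; it is imported verbatim from \cite[Proposition 1.7]{nucdimOinftystable}, so there is no internal argument to compare with, and I assess your sketch on its own terms. Your skeleton (apply the order zero structure theorem to the limit map $\mathrm{H}=h\,\Pi(\cdot)$ in the sequence algebra, exploit multiplicativity of $\iota_B\circ\theta$, then descend to finite stages) is the standard one, but the pivotal middle step has a genuine gap. Even granting that $h$ becomes a projection, the identity $h\,\Pi(\Psi(ab))=h^2\,\Pi(\Psi(a)\Psi(b))$ only yields $\mathrm{H}\left(\Psi(ab)-\Psi(a)\Psi(b)\right)=0$; it does not ``collapse'' to $\Pi(\Psi(ab))=\Pi(\Psi(a)\Psi(b))$, and a fortiori does not make $\Psi$ multiplicative, because the multiplicativity defect of $\Psi$ may sit entirely inside the kernel of $x\mapsto h\Pi(x)$ (concretely, $F_k$ may contain matrix blocks that $\eta_k$ kills or nearly kills, and $\psi_k$ can be arbitrarily non-multiplicative in those coordinates). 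Quotienting by $\ker\Pi$ is not an available move either, since the statement to be proved concerns $\Psi$ as a map into $\prod\limits_{\infty}F_k$ itself; the only honest fix is to replace each $F_k$ by a suitable sub-sum of its matrix blocks (those on which $\eta_k$ does not degenerate), and that replacement is precisely where the work of \cite[Proposition 1.7]{nucdimOinftystable} lies.

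In the descent paragraph two further assertions need repair. Compressing $\pi_k$ by a positive contraction $g_k\in\C(h_k)$ does not produce a genuine $^*$-homomorphism, only an approximately multiplicative map; to land an honest homomorphism in $B$ one should conjugate by the inverse square root of $h_k$ on the spectral part where $h_k$ is invertible, exactly as in Lemma \ref{homupwardmap} (in the unital case $h_k=\eta_k(1_{F_k})$ is automatically close to $1_B$, so no spectral cutting is needed). Second, the claim that $\tilde{\pi}_k\circ\tilde{\psi}_k\to\theta$ with $\tilde{\pi}_k$ multiplicative ``automatically forces'' $\tilde{\psi}_k$ to be approximately multiplicative is false as stated: you must first delete the blocks of $\tilde{F}_k$ lying in $\ker\tilde{\pi}_k$ so that $\tilde{\pi}_k$ becomes injective, hence isometric; only then does $\|\tilde{\psi}_k(ab)-\tilde{\psi}_k(a)\tilde{\psi}_k(b)\|=\|\tilde{\pi}_k(\tilde{\psi}_k(ab))-\tilde{\pi}_k(\tilde{\psi}_k(a))\tilde{\pi}_k(\tilde{\psi}_k(b))\|\approx\|\theta(ab)-\theta(a)\theta(b)\|=0$ go through. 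With that injectivity observation added, your final paragraph alone (bypassing the sequence-algebra detour) already yields a complete proof in the unital setting, which is all the present paper uses; the non-unital case covered by the statement still requires the spectral analysis of $h_k$ and the tracking of an approximate unit that you flag but do not carry out.
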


\begin{rmk}\label{remark:cnsqces.def.nuc.dim}
	
	\begin{enumerate}[(i)]
	    \item By the definition in \cite{nucdim}, $\dimnuc A= \dimnuc \id_A$ for any separable $\C$-algebra $A$.\label{rmk: NucDimitem1}
		
		\item If we take $\theta:A\to B$ to be a unital $^*$-homomorphism with finite nuclear dimension, a standard simplification in the spirit of {\cite[Remark 5.2 (ii)]{decomprank}} allows us to assume that the downward approximating maps $\psi_k$ can be taken to be unital and completely positive. In particular, if $\dimnuc\theta= 0$, we can take the downward approximating maps to be ucp and approximately multiplicative.\label{rmk: NucDimitem2}

	    \item Essentially the same proof as in {\cite[Proposition 2.3 (ii)]{nucdim}} shows that if $\theta:A\to B$ is a $^*$-homomorphism with $\dimnuc\theta=0$, then any matrix amplification of $\theta$, denoted by $\theta\otimes\id_{M_n(\mathbb{C})}$, has nuclear dimension equal to zero. This will be used in the proof of Proposition \ref{K1map}.\label{rmk: NucDimitem3}
	    
	    \item If $\phi,\psi:A\to B$ are approximately unitarily equivalent $^*$-homo-morphisms, then it is readily seen that $\dimnuc\phi=\dimnuc\psi$. This will be freely used throughout this paper.\label{rmk: NucDimitem4}
	\end{enumerate}
	\end{rmk}
	
\begin{lemma}\label{bounddimmorphism}
Let $A$ and $B$ be separable $\C$-algebras and $\theta:A\to B$ a $^*$-homomorphism. Then $\dimnuc\theta\leq \min(\dimnuc A,\dimnuc B)$.
\end{lemma}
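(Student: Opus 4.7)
The plan is to establish the two bounds $\dimnuc\theta\leq \dimnuc A$ and $\dimnuc\theta\leq \dimnuc B$ separately, and the inequality in the statement follows immediately. If either $\dimnuc A$ or $\dimnuc B$ is infinite the corresponding bound is vacuous, so fix $n\in\mathbb{N}$ and suppose that $\dimnuc A=n$ (respectively $\dimnuc B=n$).

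For the first bound, by Remark \ref{remark:cnsqces.def.nuc.dim}\,\eqref{rmk: NucDimitem1} there is an $n$-decomposable approximating system $(F_k,\psi_k,\eta_k)_{k\in\mathbb{N}}$ for $\id_A$. I then set $\tilde\eta_k := \theta\circ\eta_k\colon F_k\to B$ and keep $\psi_k$ unchanged, obtaining candidate upward and downward maps for $\theta$. Conditions (i) and (ii) of Definition \ref{defdim} are clear: the first holds since $\theta$ is a contraction and $\eta_k\circ\psi_k\to\id_A$ pointwise, and the second is unchanged. The only content is condition (iii), namely that the restriction of $\tilde\eta_k$ to each summand $F_k^{(i)}$ is cpc order zero. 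This reduces to the elementary observation that the composition of a cpc order zero map with a $^*$-homomorphism on the left is cpc order zero: if $\eta\colon F\to A$ is cpc order zero and $a,b\in F_+$ satisfy $ab=0$, then $\eta(a)\eta(b)=0$, and applying the $^*$-homomorphism $\theta$ preserves this product being zero; contractivity and complete positivity of the composition are immediate.

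For the second bound, I run the symmetric argument. Pick an $n$-decomposable approximating system $(G_k,\alpha_k,\beta_k)_{k\in\mathbb{N}}$ for $\id_B$ and set $\tilde\alpha_k := \alpha_k\circ\theta\colon A\to G_k$, keeping $\beta_k$ unchanged. Again (iii) is unaffected. Condition (ii) holds because $\tilde\alpha_k$ is a composition of a cpc map with a $^*$-homomorphism, which remains cpc. For (i), note that for $a\in A$ one has
\[
\|\beta_k(\tilde\alpha_k(a))-\theta(a)\| = \|\beta_k(\alpha_k(\theta(a)))-\theta(a)\|\to 0
\]
since $(G_k,\alpha_k,\beta_k)$ witnesses $\dimnuc\id_B\leq n$ and $\theta(a)\in B$.

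I do not anticipate a real obstacle here; the only point requiring care is the \emph{left} composition in the first bound, since order zero is not obviously preserved under composition with arbitrary cp maps. However, since $\theta$ is a genuine $^*$-homomorphism (hence multiplicative on positive elements with vanishing product), the order zero condition survives. Combining the two bounds yields $\dimnuc\theta\leq\min(\dimnuc A,\dimnuc B)$ as required.
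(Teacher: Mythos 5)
Your proof is correct and follows essentially the same route as the paper: precompose the downward maps of an approximating system for $\id_B$ with $\theta$, respectively postcompose the upward maps of a system for $\id_A$ with $\theta$, noting that left composition with a $^*$-homomorphism preserves the order zero property. The paper states this more tersely, but the content is identical.
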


\begin{proof}
If neither $A$ nor $B$ have finite nuclear dimension, the conclusion follows. If $\dimnuc A<\infty$, since $\theta=\theta\circ\id_A$, we can use an approximating system of $\id_A$ to get one for $\theta$. Note that composing with a $^*$-homomorphism preserves the fact that the upward maps are order zero. Hence, $\dimnuc \theta\leq \dimnuc A$. Similarly, $\dimnuc\theta\leq\dimnuc B$.
\end{proof}

\begin{rmk}
In particular, combining the lemma above and {\cite[Theorem B, Corollary C]{CPOU}}, if $A$ or $B$ are simple, unital, nuclear, and $\mathcal{Z}$-stable, then $\dimnuc\theta\leq 1$.  
\end{rmk}
	
The next proposition follows in the spirit of {\cite[Proposition 3.4]{nucdim}} and it says that the matrix blocks in the approximating finite dimensional $\C$-algebras can be taken to be large, as long as the ranks of the irreducible representations of our domain are large. In particular, if the domain has no irreducible finite dimensional representations, the sizes of the matrix blocks go to infinity.
	
\begin{prop} [cf.\ {\cite[Proposition 3.4]{nucdim}}]\label{prop: largematrixblocks}
Let $A$ and $B$ be separable $\C$-algebras and $\theta: A \to B$ a $^*$-homomorphism with $\dimnuc \theta \leq n < \infty$. Let $r\in\mathbb{N}$ and suppose $A$ has no irreducible representations of rank strictly less than $r$.
Then there exists an $n$-decomposable approximating system $(F_k, \psi_k, \eta_k)_{k\in\mathbb{N}}$
such that the irreducible representations of each $F_k$ have rank at least $r$.
\end{prop}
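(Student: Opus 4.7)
The plan is to adapt the strategy of \cite[Proposition 3.4]{nucdim} to the morphism setting. I would start with any $n$-decomposable approximating system $(F_k,\psi_k,\eta_k)_{k\in\N}$ for $\theta$ and, for each $k$ and each $0\leq i\leq n$, split $F_k^{(i)}=F_k^{(i),s}\oplus F_k^{(i),b}$, where $F_k^{(i),s}$ collects the matrix blocks of rank strictly less than $r$ and $F_k^{(i),b}$ the remaining blocks. Let $p_k^{(i)}$ denote the central support projection of $F_k^{(i),s}$ inside $F_k^{(i)}$ and set $p_k:=\sum_i p_k^{(i)}$. The new candidate system is $\tilde F_k:=\bigoplus_i F_k^{(i),b}$, $\tilde\psi_k(a):=(1-p_k)\psi_k(a)$ and $\tilde\eta_k:=\eta_k|_{\tilde F_k}$. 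By construction every irreducible representation of $\tilde F_k$ has rank at least $r$; the map $\tilde\psi_k$ is cpc (since $1-p_k$ is central in $F_k$); and each $\tilde\eta_k|_{F_k^{(i),b}}$ remains cpc order zero. So the only non-trivial point is to verify the convergence $\tilde\eta_k(\tilde\psi_k(a))\to\theta(a)$ for every $a\in A$; equivalently, $\|\eta_k^{(i)}(p_k^{(i)}\psi_k(a))\|\to 0$ for every $a\in A$ and every $i$.

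To establish this vanishing, I would pass to the sequence algebra. The sequences $(p_k^{(i)}\psi_k)_k$ and $(\eta_k^{(i)}|_{F_k^{(i),s}})_k$ induce a cpc map $\Psi^{(i),s}\colon A\to\prod\limits_{\infty}F_k^{(i),s}$ and a cpc order zero map $\mathcal{E}^{(i),s}\colon\prod\limits_{\infty}F_k^{(i),s}\to B_\infty$, and the required statement is equivalent to $\mathcal{E}^{(i),s}\circ\Psi^{(i),s}=0$. Because each $F_k^{(i),s}$ is a finite direct sum of matrix algebras of size at most $r-1$, the sequence algebra decomposes as
\begin{equation*}
	\prod\limits_{\infty}F_k^{(i),s}\cong\bigoplus_{m=1}^{r-1}M_m(D_{i,m})
\end{equation*}
for commutative unital $\C$-algebras $D_{i,m}$. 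The structure theorem for cpc order zero maps from \cite{orderzero} then yields a positive contraction $h^{(i)}\in B_\infty$ and a $^*$-homomorphism $\pi^{(i)}$ on $\prod\limits_{\infty}F_k^{(i),s}$ with $\mathcal{E}^{(i),s}(x)=h^{(i)}\pi^{(i)}(x)$.

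The hypothesis on $A$ now enters in the following guise: any $^*$-homomorphism $A\to M_m(D)$ with $m<r$ and $D$ commutative is zero, because composing with a character of $D$ produces a $^*$-homomorphism $A\to M_m$ (necessarily zero, as $A$ has no irreducible representation of rank $<r$), and characters separate points of $M_m(D)$. Hence, once $\pi^{(i)}\circ\Psi^{(i),s}$ is known to be multiplicative it must be the zero map, which forces $\mathcal{E}^{(i),s}\circ\Psi^{(i),s}=0$. The main obstacle is precisely this multiplicativity step: for $n=0$, Lemma \ref{approxmultmaps} allows one to assume $\psi_k$ is approximately multiplicative, so $\Psi^{(i),s}$ is itself a $^*$-homomorphism and the argument closes at once; for larger $n$, one has to exploit that the full composition $\mathcal{E}\circ\Psi=\iota_B\circ\theta$ is multiplicative and combine this with the order zero decomposition of $\mathcal{E}$ to transfer multiplicativity to each summand in the sequence algebra, in the spirit of the original Winter--Zacharias argument.
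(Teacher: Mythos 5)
Your reduction is fine, and your argument does close for $n=0$, but as you yourself flag, the proposal does not prove the statement for general finite $n$: the whole weight rests on a multiplicativity property of the downward maps that is simply not available once $n\geq 1$. For an $n$-decomposable system the $\psi_k$ can in general only be taken approximately \emph{order zero}, not approximately multiplicative (approximate multiplicativity of the downward maps forces quasidiagonality of $A$, which fails e.g.\ for $\mathcal{O}_\infty$, an algebra of finite nuclear dimension). The suggested repair --- ``exploit that $\mathcal{E}\circ\Psi=\iota_B\circ\theta$ is multiplicative and transfer multiplicativity to each colour'' --- is only a hope; the individual coloured pieces $\mathcal{E}^{(i)}\circ\Psi^{(i)}$ of a genuinely $n$-coloured approximation are not multiplicative and there is no evident mechanism for such a transfer. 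There is also a secondary slip: even if $\pi^{(i)}\circ\Psi^{(i),s}$ were multiplicative, it takes values in (a multiplier algebra sitting over) $B_\infty$, not in $\M_m(D)$, so the subhomogeneity hypothesis on $A$ does not kill it as you claim; that hypothesis would kill $\Psi^{(i),s}$ itself, were \emph{it} multiplicative.

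The paper closes the argument differently, and the difference is exactly what makes the general-$n$ case work. By {\cite[Proposition 1.7]{nucdimOinftystable}} one may assume the $\psi_k$ are approximately order zero (this holds for every finite $n$, unlike approximate multiplicativity). The induced map $\tilde{\psi}\colon A\to\prod\limits_{\infty}\tilde{F}_k$ into the small-block sequence algebra is then a single cpc order zero map, with no colour decomposition needed. Taking a strictly positive $h\in A$ and an irreducible representation $\pi$ of the $(r-1)$-subhomogeneous target onto some $\M_l$, $l\leq r-1$, attaining $\|\tilde\psi(h)\|$, the structure theorem for order zero maps is applied to the \emph{downward} composite $\pi\circ\tilde{\psi}$, writing it as $d\,\sigma$ for a genuine $^*$-homomorphism $\sigma\colon A\to\M_l$; the hypothesis on irreducible representations of $A$ forces $\sigma=0$, hence $\tilde{\psi}(h)=0$ and so $\tilde{\psi}=0$. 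In short: apply the order zero structure theorem to the downward map into the subhomogeneous algebra, where the hypothesis on $A$ bites, rather than to the upward map into $B_\infty$, where it does not. If you replace your multiplicativity step by this order zero argument, your proof goes through for all $n$.
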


\begin{proof}
Let $(\bar{F}_k, \bar{\psi}_k, \bar{\eta}_k)_{k\in\mathbb{N}}$ be an approximating system witnessing nuclear dimension at most $n$ as in Definition \ref{defdim}. By {\cite[Proposition 1.7]{nucdimOinftystable}}, we can further assume that the maps $\psi_k$ are approximately order zero i.e. $$\|\psi_k(a)\psi_k(b)\| \to 0$$ for all $a,b \in A_+$ that satisfy $ab = 0$. For each $k$, we write $\bar{F}_k=F_k \oplus \tilde{F}_k$, where $\tilde{F}_k$ consists of those matrix blocks of $\bar{F}_k$ with rank at most $r-1$. Likewise, we denote by $\psi, \tilde{\psi}, \eta,$ and $\tilde{\eta}$ the respective components of $\bar{\psi}$ and $\bar{\eta}$.

Consider the cpc order zero map induced by the sequence $(\tilde{\psi}_k)_{k\in\N}$:
$$ 
\tilde{\psi}: A \to \prod_\infty \tilde{F}_k.
$$ 
Let $h\in A$ be a normalised strictly positive element and set 
$$ 
\mu := \limsup\limits_{k\to\infty}\|\tilde{\psi}_k(h)\|= \|\tilde{\psi}(h)\|.\footnote{A positive element $a\in A$ is strictly positive if $\overline{aAa}=A$.}
$$

Using the fact that the product $\prod_{k=1}^\infty \tilde{F}_k$ is $(r-1)$-subhomogeneous, there exists an irreducible representation 
$$
\pi: \prod_\infty \tilde{F}_k
\to \M_l
$$ for some $l\leq r-1$ such that $\|\pi \circ \tilde{\psi}(h)\|= \mu$.

Since $\pi$ is a $^*$-homomorphism, $\pi\circ \tilde{\psi}$ is a cpc order zero map. 
By the structure theorem for order zero maps (\cite[Theorem 3.3]{orderzero}), there exist a $^*$-homomorphism $\sigma :A \to \M_l$ and $0\leq d\leq 1_{\M_l} \in \M_l$ such that 
$$
\pi\circ\tilde{\psi}(a)= d\sigma(a)=\sigma(a)d
$$ 
for any $a\in A$. 
However, by hypothesis $A$ has no irreducible representations of rank strictly less than $r$, so $\sigma = 0$. 
Thus
$$
\|\tilde{\psi}(h)\|=\mu= \|\pi\tilde{\psi}(h)\|=0.
$$ 
Since $\tilde{\psi}$ is  cpc order zero map and $h$ is a strictly positive element, we can conclude that $\tilde{\psi}=0$. Hence, $(F_k, \psi_k, \eta_k)_{k\in\mathbb{N}}$ is a system of cp approximations for $\theta$ with the required properties.
\end{proof}

\begin{cor}\label{largematrixblocks}
	
	Let $A$ and $B$ be $\C$-algebras with $A$ simple separable and non-elementary.\footnote{A $\C$-algebra $A$ is non-elementary if it is not the $\C$-algebra of compact operators on a Hilbert space.} Suppose $\theta: A \to B$ is a $^*$-homomorphism with finite nuclear dimension. Then there is a finite dimensional approximating system $(F_k, \psi_k, \phi_k)_{k \in \mathbb{N}}$ witnessing the finite nuclear dimension of $\theta$ where each $F_k$ has no irreducible representations of rank less than $k$; i.e.\ all matrix blocks in the decomposition of $F_k$ have size at least $k$.
\end{cor}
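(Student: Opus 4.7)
The plan is to apply Proposition \ref{prop: largematrixblocks} once for every value of $r$, and then run a standard diagonal selection to extract a single system. The first thing to check is that the simplicity and non-elementarity of $A$ prevent it from having any finite dimensional irreducible representation. Indeed, if $\pi\colon A\to\M_n$ were an irreducible representation of finite rank, then $\ker\pi$ would be a proper closed two-sided ideal; simplicity then forces $\ker\pi=0$, so $\pi$ is injective and $A$ embeds into $\M_n$. In particular $A$ itself is finite dimensional, and simplicity then gives $A\cong\M_n$, contradicting the assumption that $A$ is non-elementary. Consequently, for every $r\in\N$, $A$ has no irreducible representation of rank strictly less than $r$, so the hypothesis of Proposition \ref{prop: largematrixblocks} is satisfied for every such $r$.

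Applying the proposition with $r=k$ produces, for each $k\in\N$, an $n$-decomposable approximating system $(F^{(k)}_m,\psi^{(k)}_m,\eta^{(k)}_m)_{m\in\N}$ witnessing $\dimnuc\theta\leq n$ and such that every matrix block of every $F^{(k)}_m$ has rank at least $k$. Fix a dense sequence $(a_i)_{i\in\N}$ in $A$. For each $k$ choose $m_k$ large enough that
\[
\|\eta^{(k)}_{m_k}\circ\psi^{(k)}_{m_k}(a_i)-\theta(a_i)\|<\tfrac{1}{k}\qquad\text{for all } i\leq k,
\]
and set $F_k:=F^{(k)}_{m_k}$, $\psi_k:=\psi^{(k)}_{m_k}$, $\eta_k:=\eta^{(k)}_{m_k}$. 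By construction the matrix blocks of $F_k$ all have rank at least $k$, the upward maps decompose as a sum of $n+1$ cpc order zero maps, and $\psi_k$ is cpc.

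To promote the convergence from the dense sequence $(a_i)$ to all of $A$, observe that $\|\eta_k\|\leq n+1$ for every $k$ (as a sum of $n+1$ cpc order zero maps) and $\|\psi_k\|\leq 1$, so the compositions $\eta_k\circ\psi_k$ are uniformly bounded. A routine $\epsilon/3$ approximation argument then gives $\|\eta_k\circ\psi_k(a)-\theta(a)\|\to 0$ for every $a\in A$. Hence $(F_k,\psi_k,\eta_k)_{k\in\N}$ is the required approximating system. There is no real obstacle in this argument; the substantive content is entirely in Proposition \ref{prop: largematrixblocks}, and what remains is the observation about irreducible representations of simple non-elementary algebras together with a diagonal extraction.
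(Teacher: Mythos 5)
Your proof is correct and is exactly the argument the paper intends: the corollary is stated without proof precisely because it follows from Proposition \ref{prop: largematrixblocks} by noting that a simple, non-elementary $\C$-algebra has no finite dimensional irreducible representations and then diagonalising over $r=k$. Both your justification of the absence of finite dimensional irreducible representations and the uniform bound $\|\eta_k\|\leq n+1$ needed for the $\epsilon/3$ step are sound.
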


\subsection{The $\underline{K}T_u$-invariant}	

We will collect some facts we need about the total invariant. We refer the reader to Sections $2$ and $3$ in \cite{classif} for a detailed description.

Let $A$ be a unital $\C$-algebra. The \emph{total $K$-theory} of $A$, $\underline{K}(A)$, is the combination of $K_*(A)$
together with $\bigoplus_{n\geq 2}K_*(A;\mathbb{Z}/n)$ and natural maps between them. There are multiple equivalent ways of defining the groups $K_i(A;\mathbb{Z}/n)$, but in this paper we choose to say that $K_i(A;\mathbb{Z}/n):=K_i(A\otimes\mathcal{O}_{n+1})$ for $i=0,1$.\footnote{Here $\mathcal{O}_{n+1}$ denotes the Cuntz algebra generated by $n+1$ isometries for $n\geq 2$.} These all fit into natural commuting diagrams with the Bockstein maps (see for example {\cite[Section 2]{classif}}). We will use repeatedly the following short exact sequences 
\begin{equation}\label{Kthcoeffses}
\begin{tikzcd}
0 \ar{r} & K_i(A)\otimes \mathbb{Z}/n \ar{r} & K_i(A;\mathbb{Z}/n) \ar{r} & \Tor(K_{1-i}(A), \mathbb{Z}/n) \ar{r} & 0,
\end{tikzcd}
\end{equation} for $i=0,1$.

In general, for unital $\C$-algebras  $A$ and $B$, a \emph{$\Lambda$-morphism} $$\underline{\alpha}:\underline{K}(A)\to\underline{K}(B)$$ consists of $^*$-homomorphisms $$\alpha_i : K_i(A)\to K_i(B) \qquad \text{and} \qquad \alpha_i^{(n)} : K_i(A;\mathbb{Z}/n)\to K_i(B;\mathbb{Z}/n)$$ for all $i=0,1$ and $n\geq 2$ intertwining all the Bockstein operations.\ In particular, we will use that the diagram
\begin{equation}\label{eq: Bockstein}
\begin{tikzcd}
K_0(A;\mathbb{Z}/n) \ar{r}{\nu_{0,A}^{(n)}}\ar{d}{\alpha_0^{(n)}} & K_1(A) \ar{d}{\alpha_1}\\
K_0(B;\mathbb{Z}/n) \ar{r}{\nu_{0,B}^{(n)}} & K_1(B),
\end{tikzcd}
\end{equation} commutes.\footnote{Note that $\nu_{0,A}^{(n)}$ and $\nu_{0,B}^{(n)}$ denote canonical Bockstein maps.} The collection of these $\Lambda$-morphisms is denoted $\Hom_{\Lambda}(\underline{K}(A),\underline{K}(B))$.\ For a unital $^*$-homomorphism $\theta:A\to B$, we denote the induced $\Lambda$-morphism by $\underline{K}(\theta):\underline{K}(A)\to\underline{K}(B)$.

In any unital $\C$-algebra, the traces are related to the $K$-theory via the canonical pairing map. Precisely, let $\rho_A: K_0(A)\to \Aff(T(A))$ be defined by $\rho_A([x]_0)(\tau)=K_0(\tau)(x)$ for any $[x]_0\in K_0(A)$ and any $\tau\in T(A)$. Moreover, for a $^*$-homomorphism $\theta$, we will denote by $\Aff(T(\theta)):\Aff(T(A))\to \Aff(T(B))$ the map $\Aff(T(\theta))(f)(\tau)=f(\tau\circ\theta)$ for any $f\in\Aff(T(A))$ and any $\tau\in T(B)$.

Given a unital $\C$-algebra $A$, we can associate an abelian group called the \emph{(Hausdorffised) unitary algebraic $K_1$-group}, denoted by $\overline{K_1}^{\alg}(A)$, in a functorial way. 
In particular, any unital $^*$-homomorphism $\theta:A\to B$ induces a group homomorphism $\overline{K_1}^{\alg}(\theta):\overline{K_1}^{\alg}(A)\to \overline{K_1}^{\alg}(B)$. 

Importantly, we can relate $\overline{K_1}^{\alg}(A)$ with $\Aff(T(A))$ and $K_1(A)$ via canonical maps. There is a map $\cancel{a}_A: \overline{K_1}^{\alg}(A) \to K_1(A)$ that, roughly speaking, sends a class in $\overline{K_1}^{\alg}(A)$ to its corresponding class in $K_1(A)$. Moreover, there is a map $\Th_A:\Aff(T(A))\to \overline{K_1}^{\alg}(A)$ called the Thomsen map. We refer to \cite{thomsenmap} for a construction of the Hausdorffised unitary algebraic $K_1$-group and of the Thomsen map. One fact we are going to use repeatedly is that there is a non-canonical decomposition
\begin{equation}\label{algK1decomp}
\overline{K_1}^{\alg}(A)\cong K_1(A)\oplus \frac{\Aff(T(A))}{\overline{\rho_A(K_0(A))}}, 
\end{equation}{\cite[Corollary 3.3]{thomsenmap}}.\footnote{To see that this decomposition is non-canonical, the reader could consult \cite{NT96} or \cite[Example 9.11]{classif}.}

As it was worked out in {\cite[Section 3]{classif}}, the last ingredient needed in the total invariant to obtain the classification of morphisms are certain natural maps $$\zeta_A^{(n)}:K_0(A;\mathbb{Z}/n)\to \overline{K_1}^{\alg}(A), \quad n\geq 2.$$ Let $\beta:\overline{K_1}^{\alg}(A)\to \overline{K_1}^{\alg}(B)$ be a group homomorphism which is compatible with the Thomsen map (see \eqref{KthTracepairing}).\ To get a compatible morphism between the total invariants of two $\C$-algebras $A$ and $B$, we will further require that the following diagram commutes:

\begin{equation}\label{newpairingdiagram1}
\begin{tikzcd}
K_0(A;\mathbb{Z}/n) \ar{r}{\zeta_A^{(n)}}\ar{d}{\alpha_0^{(n)}} & \overline{K_1}^{\alg}(A) \ar{d}{\beta}\\
K_0(B;\mathbb{Z}/n) \ar{r}{\zeta_B^{(n)}} & \overline{K_1}^{\alg}(B).
\end{tikzcd}
\end{equation} 

It was shown in {\cite[Proposition 3.12]{classif}} that the diagram in \eqref{newpairingdiagram1} always commutes if $A$ and $B$ are unital $\C$-algebras with $K_1(A)$ torsion free.

\begin{defn}[{\cite[Definition 3.5]{classif}}]\label{KTuInv}
Let $A,B$ be unital $\C$-algebras. The \emph{total invariant} of $A$, denoted by $\underline{K}T_u(A)$, is 
\begin{equation}
    \underline{K}T_u(A):=(\underline{K}(A), \Aff(T(A)), \overline{K_1}^{\alg}(A), [1_A]_0, \rho_A, \Th_A, \cancel{a}_A, (\zeta_A^{(n)})_{n\geq 2}).
\end{equation} 

A morphism $(\underline{\alpha}, \beta, \gamma): \underline{K}T_u(A)\to \underline{K}T_u(B)$ consists of a $\Lambda$-morphism $\underline{\alpha}\in \Hom_{\Lambda}(\underline{K}(A),\underline{K}(B))$ with $\alpha_0([1_A]_0)=[1_B]_0$, a positive unital linear map $\gamma: \Aff(T(A))\to \Aff(T(B))$, and a group homomorphism $\beta:\overline{K_1}^{\alg}(A)\to \overline{K_1}^{\alg}(B)$ such that the diagram

\begin{equation}\label{KthTracepairing}
\begin{tikzcd}
K_0(A) \ar{r}{\rho_A}\ar{d}{\alpha_0} & \Aff(T(A)) \ar{r}{\Th_A}\ar{d}{\gamma} & \overline{K_1}^{\alg}(A) \ar{r}{\cancel{a}_A}\ar{d}{\beta} & K_1(A)\ar{d}{\alpha_1}\\
K_0(B) \ar{r}{\rho_B} & \Aff(T(B)) \ar{r}{\Th_B} & \overline{K_1}^{\alg}(B) \ar{r}{\cancel{a}_B} & K_1(B)
\end{tikzcd}
\end{equation} and the diagram in \eqref{newpairingdiagram1} commute. In particular, by  {\cite[Proposition 3.6]{classif}}, any unital $^*$-homomorphism $\theta:A\to B$ induces a natural $\underline{K}T_u$-morphism  given by $$\underline{K}T_u(\theta) :=(\underline{K}(\theta), \overline{K_1}^{\alg}(\theta), \Aff(T(\theta))): \underline{K}T_u(A)\to \underline{K}T_u(B).$$ 
\end{defn}

Moreover, it is said that $(\underline{\alpha}, \beta, \gamma): \underline{K}T_u(A)\to \underline{K}T_u(B)$ is \emph{faithful} if the induced map $\gamma^*:T(B)\to T(A)$ (which exists by Kadison's duality \cite{Kad51}) satisfies that $\gamma^*(\tau)$ is faithful for all $\tau\in T(B)$. Note that this is automatic if $A$ is simple, which is the case in this paper. 
The main classification theorem we will use in this paper is the following.

\begin{theorem}[{\cite[Theorem 9.3]{classif}}]\label{classifthm}
Let $A$ and $B$ be unital separable nuclear $\C$-algebras such that $A$ satisfies the UCT and $B$ is simple and $\Z$-stable. If $(\underline{\alpha}, \beta, \gamma): \underline{K}T_u(A) \to \underline{K}T_u(B)$ is a faithful morphism, then there exists a unital full $^*$-homomorphism $\phi: A \to B$ such that $\underline{K}T_u(\phi) = (\underline{\alpha},\beta,\gamma)$, and this $\phi$ is unique up to approximate unitary equivalence.
\end{theorem}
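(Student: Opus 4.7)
The plan is to follow the standard Elliott intertwining paradigm, reducing the classification to two separate statements: an \emph{existence} theorem that constructs a unital $^*$-homomorphism realising a prescribed $\underline{K}T_u$-morphism, and a \emph{uniqueness} theorem showing that two unital $^*$-homomorphisms inducing the same invariant are approximately unitarily equivalent. Fullness of $\phi$ will then follow from the faithfulness of $\gamma$ together with simplicity of $A$ and $B$: the former ensures $\phi$ is injective, and the latter upgrades injectivity to fullness.

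For existence, I would start from the UCT, which identifies $\KK(A,B)$ with a suitable group of compatible pairs of maps on $K_*$. This lifts the underlying $\Lambda$-morphism $\underline{\alpha}$ to a class $\kappa \in \KK(A,B)$. Standard KK-existence machinery --- working in the Cuntz picture and exploiting the abundance of approximately central sequences in the $\Z$-stable target $B$ --- then produces a unital $^*$-homomorphism $\phi_0: A \to B$ whose induced map on total $K$-theory is $\underline{\alpha}$. One next perturbs $\phi_0$ (by approximately inner automorphisms, or by replacing it with $\Z$-absorption style conjugations) to simultaneously match the tracial data $\gamma$ and the Hausdorffised algebraic $K_1$ data $\beta$. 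The tracial adjustment exploits the rich structure of $T(B)$ for a simple $\Z$-stable algebra and the close relationship between $\Aff(T(B))$ and the ordered $K_0$-group of $B$; the $\beta$-adjustment exploits the decomposition \eqref{algK1decomp}, noting that the $\Aff(T(B))/\overline{\rho_B(K_0(B))}$ factor is detected by unitaries in the connected component of the identity.

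For uniqueness, given two unital $^*$-homomorphisms $\phi, \psi: A \to B$ with $\underline{K}T_u(\phi) = \underline{K}T_u(\psi)$, I would pursue a two-sided approximate intertwining in the style of Elliott, which reduces the problem to a \emph{stable uniqueness} statement: for any finite set $\mathcal{F}\subseteq A$ and any $\epsilon>0$, there is an auxiliary summand $^*$-homomorphism $\sigma: A \to B$ such that $\phi \oplus \sigma$ and $\psi \oplus \sigma$ agree on $\mathcal{F}$ up to conjugation by a unitary within tolerance $\epsilon$. Such a $\sigma$ is produced by a $\KK$-uniqueness theorem (which again uses the UCT on $A$), and the added summand is reabsorbed using $\Z$-stability of $B$. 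Careful bookkeeping then assembles the finite-data agreements into an honest approximate unitary equivalence.

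The hardest step will be matching the Hausdorffised $K_1$ component $\beta$, along with the compatibility it demands with the diagrams \eqref{KthTracepairing} and \eqref{newpairingdiagram1}. Unlike $K_1$, the group $\overline{K_1}^{\alg}(B)$ is sensitive to analytic information about $B$ through the closure of the commutator subgroup of the unitary group, so matching $\beta$ exactly rather than only modulo the image of $\Th_B$ forces delicate control of unitary paths during the perturbation step in the existence argument. The compatibility conditions involving the Bockstein maps $\zeta^{(n)}$ compound the difficulty when $K_1(A)$ has torsion, since \eqref{newpairingdiagram1} then imposes a genuine extra constraint beyond what is automatic in the torsion-free case of \cite[Proposition 3.12]{classif}.
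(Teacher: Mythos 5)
First, a point of comparison that matters: the paper does not prove this statement at all. Theorem \ref{classifthm} is imported verbatim from \cite[Theorem 9.3]{classif} and used as a black box throughout; there is no ``paper's own proof'' here to measure your argument against, only the (very long) proof in the cited reference. Your outline does track the broad shape of how such classification theorems are established --- Elliott intertwining reducing the problem to existence plus uniqueness, a UCT-based lift of $\underline{\alpha}$ to $\KK(A,B)$, stable uniqueness with absorption of the auxiliary summand via $\Z$-stability --- but as written it is a research programme rather than a proof: each clause (``standard KK-existence machinery'', ``a $\KK$-uniqueness theorem'', ``careful bookkeeping'') is itself a substantial theorem occupying a large part of \cite{classif} and its predecessors, and none of it is carried out.

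Beyond incompleteness, one step of your existence strategy fails as described. You propose to first produce $\phi_0$ realising $\underline{\alpha}$ and then ``perturb $\phi_0$ (by approximately inner automorphisms \dots)'' to match the tracial data $\gamma$ and the algebraic $K_1$ data $\beta$. But conjugation by unitaries leaves the induced map on $\Aff(T(B))$ and on $\overline{K_1}^{\alg}(B)$ unchanged: $\tau(u^*\phi_0(a)u)=\tau(\phi_0(a))$ for every $\tau\in T(B)$, and approximate unitary equivalence preserves the whole $\underline{K}T_u$-morphism. So once $\phi_0$ is fixed, its tracial and $\overline{K_1}^{\alg}$ data are fixed, and no inner perturbation can move them onto a prescribed $(\gamma,\beta)$. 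The invariant has to be realised all at once (in \cite{classif} this is done by classifying maps into sequence algebras/ultrapowers against the full invariant and then de-tensoring), not in two stages. Separately, your justification of fullness invokes ``simplicity of $A$'', which is not a hypothesis of the theorem: only $B$ is assumed simple. Since $B$ is simple, fullness of $\phi$ amounts to injectivity, and that is what the faithfulness assumption on $\gamma$ is there to guarantee; simplicity of $A$ plays no role.
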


We are going to use repeatedly the following special case of the classification theorem above.

\begin{lemma}\label{lemma : classifthmrr0}
Suppose $A$ and $B$ are $\C$-algebras as in Theorem \ref{classifthm} and suppose further that $B$ has real rank zero. Then, for any triple $(\underline{\alpha}, \beta, \gamma)$ satisfying the diagram in \eqref{KthTracepairing}, $\beta$ is determined by $\alpha_1$, and the new pairing map diagram in \eqref{newpairingdiagram1} is automatically satisfied. Therefore, unital full $^*$-homomorphisms $\phi:A\to B$ are classified by total $K$-theory and traces.
\end{lemma}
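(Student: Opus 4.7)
My plan rests on the observation that real rank zero forces the comparison map $\cancel{a}_B\colon\overline{K_1}^{\alg}(B)\to K_1(B)$ to be an isomorphism. Indeed, for a simple unital separable nuclear $\mathcal{Z}$-stable $\C$-algebra with real rank zero and nonempty tracial state space, the image $\rho_B(K_0(B))$ is uniformly dense in $\Aff(T(B))$ by the standard Blackadar--Handelman--Rieffel circle of results (projections separate traces combined with strict comparison). The non-canonical decomposition \eqref{algK1decomp} then collapses to $\overline{K_1}^{\alg}(B)\cong K_1(B)$, identifying $\cancel{a}_B$ with the projection onto the first summand and hence an isomorphism.

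With $\cancel{a}_B$ invertible, the rightmost square of diagram \eqref{KthTracepairing} can be solved uniquely for $\beta$:
\[
\beta=\cancel{a}_B^{-1}\circ\alpha_1\circ\cancel{a}_A.
\]
This establishes the first assertion. For the second assertion, since $\cancel{a}_B$ is injective, it suffices to check that \eqref{newpairingdiagram1} commutes after postcomposition with $\cancel{a}_B$. Using the formula for $\beta$ just obtained together with the identity $\cancel{a}_X\circ\zeta_X^{(n)}=\nu_{0,X}^{(n)}$ recorded in \cite{classif} for $X\in\{A,B\}$, both branches of the resulting square reduce to
\[
\alpha_1\circ\nu_{0,A}^{(n)}=\nu_{0,B}^{(n)}\circ\alpha_0^{(n)},
\]
which is precisely the Bockstein compatibility \eqref{eq: Bockstein} built into any $\Lambda$-morphism $\underline{\alpha}$. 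Hence \eqref{newpairingdiagram1} is automatic.

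Once $\beta$ is determined and \eqref{newpairingdiagram1} holds for free, a $\underline{K}T_u$-morphism $(\underline{\alpha},\beta,\gamma)$ carries no extra data beyond the pair $(\underline{\alpha},\gamma)$ compatible with $\rho_A,\rho_B$ via the left square of \eqref{KthTracepairing} and with $\Th_A,\Th_B$ via the middle square; after ranging over such pairs, an application of Theorem \ref{classifthm} yields the stated classification of unital full $^*$-homomorphisms by total $K$-theory and traces. The main technical point I expect to double-check is the density of $\rho_B(K_0(B))$ in $\Aff(T(B))$ under these hypotheses and the sign/normalisation convention ensuring $\cancel{a}_X\circ\zeta_X^{(n)}=\nu_{0,X}^{(n)}$; both should be clean references rather than new arguments.
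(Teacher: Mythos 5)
Your proposal is correct and follows essentially the same route as the paper: identify $\cancel{a}_B$ as an isomorphism via density of $\rho_B(K_0(B))$ in $\Aff(T(B))$, solve the rightmost square for $\beta=\cancel{a}_B^{-1}\circ\alpha_1\circ\cancel{a}_A$, and reduce \eqref{newpairingdiagram1} to the Bockstein compatibility \eqref{eq: Bockstein} using $\cancel{a}_X\circ\zeta_X^{(n)}=\nu_{0,X}^{(n)}$. The only small point to add is the traceless (purely infinite) case, where $\overline{K_1}^{\alg}(B)\cong K_1(B)$ follows immediately from \eqref{algK1decomp} without any density argument.
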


\begin{proof}
If $B$ is purely infinite, then \eqref{algK1decomp} shows that $\overline{K_1}^{\alg}(B)\cong K_1(B)$. If $B$ is stably finite, since $B$ is simple, nuclear, $\mathcal{Z}$-stable, and has real rank zero, it follows that the image of $K_0(B)$ under the pairing map is uniformly dense in $\Aff(T(B))$ (\cite[Theorem 7.2]{rr0char}). Therefore, $\overline{K_1}^{\alg}(B)\cong K_1(B)$. In both cases, the isomorphism is given by $\cancel{a}_B$, so the diagram in \eqref{KthTracepairing} shows that $\beta=\cancel{a}_B^{-1}\circ \alpha_1\circ \cancel{a}_A$. Therefore, $\beta$ is determined by the $K$-theory maps and traces.

It remains to check that the diagram in \eqref{newpairingdiagram1} commutes. By {\cite[Proposition 3.2]{classif}}, the diagram
\begin{equation}\label{newpairingdef}
\begin{tikzcd}
K_0(A;\mathbb{Z}/n) \ar{r}{\nu_{0,A}^{(n)}}\ar{d}{\zeta_A^{(n)}} & K_1(A) \ar{d}{\id}\\
\overline{K_1}^{\alg}(A) \ar{r}{\cancel{a}_A} & K_1(A),
\end{tikzcd}
\end{equation} commutes. 
Since $B$ has real rank zero, $$\beta\circ\zeta_A^{(n)}= \cancel{a}_B^{-1}\circ \alpha_1\circ \cancel{a}_A\circ \zeta_A^{(n)}.$$ Using diagram \eqref{newpairingdef}, it follows that $\beta\circ\zeta_A^{(n)}= \cancel{a}_B^{-1}\circ \alpha_1\circ \nu_{0,A}^{(n)}$. Then, $\alpha_1\circ \nu_{0,A}^{(n)}=\nu_{0,B}^{(n)}\circ \alpha_0^{(n)}$ by \eqref{eq: Bockstein}. Finally, using diagram \eqref{newpairingdef} for $B$, it follows that $\zeta_B^{(n)}=\cancel{a}_B^{-1}\circ\nu_{0,B}^{(n)}$. Thus 
\begin{align}
\beta\circ\zeta_A^{(n)}=\cancel{a}_B^{-1}\circ \nu_{0,B}^{(n)}\circ \alpha_0^{(n)}=\zeta_B^{(n)}\circ \alpha_0^{(n)}.
\end{align}
Therefore, the diagram in \eqref{newpairingdiagram1} commutes and the conclusion follows by Theorem \ref{classifthm}.
\end{proof}

\section{Products of finite dimensional $\C$-algebras}\label{ultraprod}

One goal of this section is to show that the $K_0$-group of a sequence algebra of finite dimensional $\C$-algebras can be thought of as a dimension group which is not necessarily countable. We will state some standard results concerning dimension groups, but we refer the reader to \cite{goodearlbook} for details.\footnote{Note that in \cite{goodearlbook} a dimension group can be uncountable, but we are using the convention from \cite{rordambook}.}

\begin{defn}
A countable ordered abelian group $(G, G_+)$ is called a \emph{dimension group} if it is isomorphic to the inductive limit of a sequence

\[
\begin{tikzcd}
\mathbb{Z}^{r_1} \ar{r}{\alpha_1} & \mathbb{Z}^{r_2} \ar{r}{\alpha_2} & \mathbb{Z}^{r_3} \ar{r}{\alpha_3} & \ldots & ,
\end{tikzcd}
\] for some natural numbers $r_n$, some positive group homomorphisms $\alpha_n$, where $\mathbb{Z}^r$ is equipped with its standard ordering given by $$(\mathbb{Z}^r)_+=\{(x_1,x_2,\ldots, x_r) : x_j\geq 0\}.$$ 
\end{defn}

In general, if $(G,G_+)$ is an ordered group, $G$ is said to be \emph{simple} if every non-zero positive element is an order unit. We further record some important structural properties of ordered abelian groups. An ordered abelian group $G$ is called \emph{unperforated} if for any $n\in\mathbb{N}$ and any $g\in G$ such that $ng\in G_+$, then $g\in G_+$. Also, $G$ is said to have the \emph{Riesz interpolation property} if for any $g_1,g_2,h_1,h_2\in G$ such that $g_i\leq h_j$ for any $i,j=1,2$ there exists $z\in G$ such that $g_i\leq z\leq h_j$ for any $i,j=1,2$.\ An ordered abelian group $(G,G_+)$ is said to have the \emph{Riesz decomposition property} if for any $g,h_1,h_2 \in G_+$ such that $g \leq  h_1 + h_2$, there exist $g_1,g_2 \in G_+$ such
that $g = g_1 + g_2$ and $g_j \leq h_j$ for each $j=1,2$. In fact, the Riesz interpolation property is equivalent to the Riesz decomposition property for ordered abelian groups (see \cite[Proposition 2.1]{goodearlbook}).

Using a classical theorem of Effros, Handelman, and Shen (\cite[Theorem $2.2$]{dimgroup}), a countable ordered abelian group is a dimension group if and only if it is \emph{unperforated} and has the \emph{Riesz interpolation property}. In fact, all dimension groups are realised as the $K_0$-group of some AF-algebra.

\begin{prop}[{\cite[Proposition 1.4.2, Corollary 1.5.4]{rordambook}}]\label{AFdimgroup}
The ordered $K_0$-group of an $AF$-algebra is a dimension group and for every dimension group $G$, there is an $AF$-algebra $A$ such that $K_0(A)$ and $G$ are isomorphic as ordered abelian groups. Moreover, if the dimension group $G$ is simple, then the corresponding $AF$-algebra is simple.
\end{prop}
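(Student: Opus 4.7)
For the first assertion, I would write the AF-algebra as an inductive limit $A=\varinjlim F_n$ of finite dimensional $\C$-algebras with connecting $^*$-homomorphisms $\phi_n\colon F_n\to F_{n+1}$. Each $F_n$ decomposes as $F_n\cong \M_{k_1^{(n)}}\oplus\cdots\oplus \M_{k_{r_n}^{(n)}}$, and so by standard computations $K_0(F_n)\cong\mathbb{Z}^{r_n}$ with its standard ordering, while $K_0(\phi_n)\colon\mathbb{Z}^{r_n}\to\mathbb{Z}^{r_{n+1}}$ is positive (being induced by a $^*$-homomorphism). Continuity of $K_0$ with respect to inductive limits (preserving the positive cone) then identifies $K_0(A)$ with the inductive limit of the diagram $(\mathbb{Z}^{r_n},K_0(\phi_n))_{n\in\mathbb{N}}$ as ordered abelian groups, which is exactly the definition of a dimension group.

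For the second assertion, starting from an arbitrary dimension group $G$, the Effros--Handelman--Shen theorem provides an isomorphism $G\cong\varinjlim (\mathbb{Z}^{r_n},\alpha_n)$ for some positive group homomorphisms $\alpha_n\colon\mathbb{Z}^{r_n}\to\mathbb{Z}^{r_{n+1}}$. The plan is to reverse the computation of the previous paragraph: each $\alpha_n$ is encoded by a matrix $(m_{ij}^{(n)})$ with non-negative integer entries, and such data is precisely what determines a $^*$-homomorphism between finite dimensional $\C$-algebras once the block sizes are chosen to be compatible. I would set $k_j^{(1)}:=1$ and then recursively define $k_i^{(n+1)}:=\sum_j m_{ij}^{(n)}k_j^{(n)}$, and build $\phi_n\colon F_n\to F_{n+1}$ by sending the $j$-th block $\M_{k_j^{(n)}}$ diagonally into the $i$-th block $\M_{k_i^{(n+1)}}$ with multiplicity $m_{ij}^{(n)}$. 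By construction $K_0(\phi_n)=\alpha_n$, and the AF-algebra $A:=\varinjlim (F_n,\phi_n)$ satisfies $K_0(A)\cong G$ as ordered abelian groups by continuity of $K_0$.

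For simplicity, I would invoke the well-known characterisation that an AF-algebra $A$ is simple if and only if $K_0(A)$ is simple as an ordered abelian group, i.e.\ every non-zero positive element is an order unit. Thus if we begin with a simple dimension group $G$, the AF-algebra constructed above is automatically simple.

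The step I would expect to require the most care is the explicit construction of the connecting maps $\phi_n$ in the reverse direction: one needs to check that the recursive choice of block sizes $k_i^{(n+1)}=\sum_j m_{ij}^{(n)}k_j^{(n)}$ produces genuine $^*$-homomorphisms with the prescribed $K_0$-maps, and that this is compatible with taking inductive limits. Unitality is not required here, so one does not need to adjust for scaling, which simplifies matters; if a unital statement were needed, one would first reduce to the case where $G$ contains a distinguished order unit and arrange the block sizes so that each $\phi_n$ is unital, which introduces only mild bookkeeping.
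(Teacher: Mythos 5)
The paper gives no proof of this proposition, citing \cite{rordambook} directly, and your sketch reproduces exactly the standard argument found there: continuity of $K_0$ and the computation $K_0(F)\cong\mathbb{Z}^r$ for the first claim, realisation of the positive connecting maps by a Bratteli diagram for the second, and the correspondence between ideals of an AF-algebra and order ideals of its dimension group for simplicity. Two small remarks: with the paper's definition a dimension group already comes presented as an inductive limit of $(\mathbb{Z}^{r_n})$, so invoking Effros--Handelman--Shen is unnecessary, and your recursion $k_i^{(n+1)}=\sum_j m_{ij}^{(n)}k_j^{(n)}$ can yield a zero block size when a row of the multiplicity matrix vanishes, which is the one genuine piece of bookkeeping to repair (e.g.\ replace the sum by $\max\bigl(\sum_j m_{ij}^{(n)}k_j^{(n)},1\bigr)$ and allow non-unital embeddings, which do not change the induced $K_0$-maps).
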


Since we will be mostly interested in dimension groups coming from $\mathcal{Z}$-stable AF-algebras, finite dimensional representations are a potential issue. However, the lack of finite dimensional representations can be encoded at the level of the $K_0$-group. For this, it is said that an ordered abelian group $G$ has \emph{property $(D)$} if for each order unit $x$ in $G$, there exists an order unit $y$ in $G$ such that $2y\leq x$. As shown in {\cite[Lemma 9]{embZ}}, the $K_0$-group of a unital AF-algebra $A$ has \emph{property $(D)$} if and only if $A$ has no non-zero finite dimensional representations. 
Let $(F_k)_{k \in \mathbb{N}}$ be a sequence of finite dimensional $\C$-algebras and denote by $E:= \prod\limits_{\infty}F_k$ the sequence algebra induced by them. Our key observation is that $K_0(E)$ is almost the $K_0$-group of an AF-algebra, with the mention that it might not be countable.

Let us also recall the construction of the Cuntz semigroup of $E$. If $a,b\in E\otimes \mathcal{K}$ are positive, we will write $a\precsim b$ to mean that there exists a sequence $(x_n)_{n\in\mathbb{N}}\subset E\otimes \mathcal{K}$ such that $x_nbx_n^*\to a$ as $n\to\infty$. We write $a\sim b$ if $a\precsim b$ and $b\precsim a$ and the Cuntz semigroup of $E$ is $(E\otimes\mathcal{K})_+/\sim$. The Cuntz semigroup of $E$ is \emph{unperforated} if for any $n\in\mathbb{N}$ and $a,b\in (E\otimes\mathcal{K})_+$ such that $a\otimes 1_n\precsim b\otimes 1_n$, we have $a\precsim b$.

\begin{prop}\label{K0ultraprod}
Let $E$ as defined above. Then $K_0(E)$ is an unperforated ordered abelian group which satisfies the Riesz interpolation property. Moreover, if for all $k\geq 1$, $F_k$ has no irreducible representations of rank less than $k$, then $K_0(E)$ satisfies property $(D)$.
\end{prop}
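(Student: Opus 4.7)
The plan is to reduce all three statements to the known properties of the groups $K_0(F_k)\cong\mathbb{Z}^{r_k}$ via a lifting principle. First I would establish that every projection $p\in M_N(E)$ admits a representative $(p_k)_k$ by projections in $M_N(F_k)$ (self-adjoint lift followed by functional calculus near $\{0,1\}$), and that every Murray--von Neumann equivalence in $M_N(E)$ is witnessed eventually by equivalences in $M_N(F_k)$ via partial isometry lifts, which are unobstructed since each $F_k$ is finite dimensional. This identifies $K_0(E)$ with the subgroup of $\prod_k K_0(F_k)/\bigoplus_k K_0(F_k)$ of classes admitting a uniformly bounded representative $([p_k]-[q_k])_k$ with $p_k,q_k\in M_N(F_k)$ for some fixed $N$; the positive cone corresponds to classes of sequences of projections in some common $M_N(F_k)$.

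Once this identification is in place, unperforation and Riesz interpolation become coordinatewise statements in $K_0(F_k)=\mathbb{Z}^{r_k}$. For unperforation, if $nx\geq 0$ with bounded representative $([p_k]-[q_k])_k$, then $n[p_k]\geq n[q_k]$ in $K_0(F_k)$ for large $k$, and I will invoke unperforation of $\mathbb{Z}^{r_k}$ to conclude $[p_k]\geq[q_k]$ eventually; the associated subprojections witness $x\geq 0$. For interpolation, after choosing uniformly bounded representatives of $g_i\leq h_j$, I will pick a coordinatewise interpolant $z_k$ in each $\mathbb{Z}^{r_k}$; its entries remain bounded by those of $h_{1,k}$, so $z_k$ is realised by a projection in $M_N(F_k)$ and the sequence assembles to an interpolant in $K_0(E)$.

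For property $(D)$, I would write the given order unit as $x=[p]$ with $p\in M_N(E)$ and lift $p_k\in M_N(F_k)$ of rank $m_j^{(k)}$ in the $j$-th block (of size $n_j^{(k)}\geq k$). The order-unit hypothesis provides some $n_0$ with $n_0 x\geq[1_E]$, forcing $m_j^{(k)}\geq n_j^{(k)}/n_0\to\infty$. Halving the rank in each block produces projections $q_k$ of rank $\lfloor m_j^{(k)}/2\rfloor$, whose class $y\in K_0(E)$ satisfies $2y\leq x$; and for any $w$ representable in $M_M(F_k)$, the estimate $ny_k\geq w_k$ will hold for large $k$ whenever $n>2Mn_0$, so $y$ is an order unit. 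The main difficulty I anticipate is the boundedness bookkeeping: an arbitrary sequence in $\prod_k K_0(F_k)/\bigoplus_k K_0(F_k)$ need not come from $K_0(E)$, and at each step one must verify that the newly constructed class admits a uniformly bounded projection representative.
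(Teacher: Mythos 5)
Your proposal is correct, but it follows a genuinely different route from the paper. The paper never computes $K_0(E)$ explicitly: it derives unperforation from unperforation of the Cuntz semigroup of $E$, obtains Riesz interpolation by combining real rank zero of $E$ (so that projections satisfy Riesz decomposition by Zhang's theorem) with stable rank one and cancellation, and proves property $(D)$ by characterising order units as the classes that are strictly positive on all limit traces via strict comparison, then constructing the halving projection blockwise much as you do. You instead identify $K_0(E)$, with its order, as the subgroup of $\prod_k K_0(F_k)/\bigoplus_k K_0(F_k)$ of classes with uniformly bounded projection representatives, and reduce all three assertions to coordinatewise statements in $\mathbb{Z}^{r_k}$; for interpolation the coordinatewise maximum works, and your rank estimates for property $(D)$ (including the threshold $n>2Mn_0$) check out. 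Your approach is more elementary and self-contained, yields an explicit picture of $(K_0(E),K_0(E)_+)$, and avoids citing unperforation of Cuntz semigroups and strict comparison with respect to limit traces; the price is that the lifting lemma you open with (projection lifts, eventual witnessing of Murray--von Neumann (sub)equivalences, well-definedness and injectivity of the induced map on $K_0$, and the identification of the positive cone) must be proved carefully, together with the boundedness bookkeeping you already flag --- none of this is an obstruction, but it is where all the real work of your argument sits, whereas the paper outsources it to general structure theory. One small imprecision to fix in a write-up: your interpolant $z_k$ need not be a nonnegative vector, so it is realised as a difference of projection classes (e.g.\ via $z=g_1+(z-g_1)$ with $z-g_1\geq 0$) rather than by a single projection in $M_N(F_k)$.
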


\begin{proof}
By Lemma \ref{lem:ultraproducts.real.ranz.zero}, $E$ has real rank zero.
Then, \cite[Corollary 1.3]{riesz} shows that projections in the $K_0$-group satisfy the Riesz decomposition property with respect to the Murray-von Neumann equivalence. Moreover, since $E$ is unital and has stable rank one (see for example {\cite[Lemma $1.21$]{bbstww}})\footnote{Technically this was proved for $\prod\limits_{\omega}F_k$, but the same proof works for the sequence algebra $\prod\limits_{\infty}F_k$.}, it has cancellation (\cite[Proposition 6.5.1]{blackadar}) and hence its $K_0$-group satisfies the Riesz decomposition property. This condition is equivalent to the Riesz interpolation property by {\cite[Proposition 2.1]{goodearlbook}}. 

Now, let $n\in \mathbb{N}$ and $x\colon=[p]_0-[q]_0\in K_0(E)$ such that $nx\in K_0(E)_+$. This implies that $n[q]_0\leq n[p]_0$ and since we have cancellation, this is equivalent to saying that $q\otimes 1_n\precsim p\otimes 1_n$ in the Cuntz semigroup of $E$. Since the Cuntz semigroup of $E$ is unperforated (\cite[Lemma 2.3(ii)]{CuntzUnperf}), we get that $q\precsim p$, which is equivalent to $x\in K_0(E)_+$.\ Hence, $K_0(E)$ is unperforated.

For the last part, since we have strict comparison with respect to limit traces (essentially by the same proof of {\cite[Lemma 1.23]{bbstww}}), order units in $K_0(E)$ correspond precisely to those elements which are strictly positive on all limit traces.  
Let us choose an order unit $[p]_0
 \in K_0(E)$, which is induced by a sequence of projections $p_k\in \mathbb{M}_N(F_k)$ for some natural number $N$. Then, if $F_k$ has a decomposition into $r$ matrix blocks, we can write $p_k$ as a sum of orthogonal projections $p_k^{(1)}\oplus\ldots\oplus p_k^{(r)}$.\footnote{Note that $r$ depends on $k$, but we supress this for ease of notation.}

Since $p$ is strictly positive on all limit traces and the trace space is compact, there exists some $n\in\mathbb{N}$ such that $\tau(p)\geq 1/n$ for all limit traces $\tau$. 
By hypothesis, each of the blocks of $F_k$ has size at least $k$, and denote the size of the block $i$ by $n^{(k)}_i$. Then, we can consider $q=(q_k)_{k \in \mathbb{N}}$, where $q_k=q_k^{(1)}\oplus \ldots \oplus q_k^{(r)}$, and each $q_k^{(i)}$ is a diagonal matrix with $\lfloor \frac{n^{(k)}_i}{3n} \rfloor$ entries equal to $1$ on the main diagonal and the rest equal to $0$. Then, for any trace $\tau$ on $F_k$, one has that $2\tau(q_k)<\frac{1}{n}\leq \tau(p)$, where we still denote by $\tau$ the non-normalised extension to $\mathbb{M}_N(F_k)$. Since each $F_k$ has strict comparison, it follows that $2q_k \precsim p_k$, so $2[q]_0 \leq [p]_0$. Note that $[q]_0$ constructed in this way is an order unit. Since $n_i^{(k)}\geq k$ for all $i$ and all $k\in\mathbb{N}$, $q_k$ is nonzero for $k$ large enough. Moreover, the trace of $q_k^{(i)}$ is $\lfloor \frac{n^{(k)}_i}{3n} \rfloor/n^{(k)}_i$, which is at least $\frac{1}{4n}$ for $k$ large enough. Thus, $q$ is strictly positive on all limit traces, so $[q]_0$ is an order unit. Hence the conclusion follows.
\end{proof}

Therefore, the lack of countability is the only obstruction that prevents us to say that $K_0(E)$ is a dimension group. However, all of these abstract properties defining a dimension group together with property $(D)$ are \emph{separably inheritable} in the sense of Blackadar {\cite[Definition II.8.5.1]{sepinheritblackadar}}. Thus, we can proceed to find a separable $\C$-subalgebra of the sequence algebra such that its $K_0$-group is a dimension group. Let us first recall the definition of a separably inheritable property.

\begin{defn}{\cite[Definition II.8.5.1]{sepinheritblackadar}}\label{defn: SepInh}
A property $P$ of $\C$-algebras is \emph{separably inheritable} if the following hold:

\begin{enumerate}
    \item Whenever $A$ is a $\C$-algebra with property $P$ and $B$ is a separable $\C$- subalgebra of $A$, then there is a separable $\C$-subalgebra $C$ of $A$ which contains $B$ and which has property $P$;
    \item whenever $(A_n,\phi_n)$ is an inductive system of separable $\C$-algebras with injective connecting maps, and each $A_n$ has property $P$, so does $\lim\limits_{\longrightarrow} A_n.$
\end{enumerate}
\end{defn}

\begin{prop}[{\cite[II.8.5.4, II.8.5.5]{sepinheritblackadar}}]\label{sepinh}
The following properties are separably inheritable:

\begin{enumerate}
    \item $A$ has real rank zero;
    \item $A$ has stable rank one;
    \item $K_0(A)$ is unperforated;
    \item $K_0(A)$ has the Riesz interpolation property;
    \item $K_1(A)=0$.
\end{enumerate}
\end{prop}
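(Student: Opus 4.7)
The plan is to verify both clauses of Definition \ref{defn: SepInh} for each of the five properties. Clause (ii), stability under inductive limits with injective connecting maps, is the easier half: real rank zero and stable rank one both pass to such inductive limits directly from the approximating nature of their definitions, while the group-theoretic conditions (unperforatedness of $K_0$, Riesz interpolation on $K_0$, and vanishing of $K_1$) follow from the fact that $K$-theory commutes with inductive limits, so that each of these properties is inherited by the directed union from the individual terms.

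For clause (i), I would run a standard back-and-forth argument. Given a separable $\C$-subalgebra $B = B_0$ of $A$, I would inductively build an increasing chain of separable $\C$-subalgebras $B_0 \subseteq B_1 \subseteq B_2 \subseteq \cdots$ of $A$ so that each $B_{n+1}$ contains witnesses in $A$ certifying property $P$ for a countable dense collection of data in $B_n$. For real rank zero, I would fix a countable dense set $\{a_k\}$ of self-adjoint elements in $B_n$ and a countable dense set of tolerances $\epsilon>0$, and for each pair append an invertible self-adjoint element of $A$ within $\epsilon$ of $a_k$; for stable rank one, the construction is analogous with general invertible approximants; for $K_0$ unperforated and Riesz interpolation, I would append matrix units of $A$ implementing the Murray--von Neumann equivalences of projections guaranteed in $A$ by applying the property to classes already in $K_0(B_n)$; for $K_1(A)=0$, to each unitary in a countable dense subset of the unitary group of each matrix algebra over $\widetilde{B_n}$ I would append a continuous path in the unitary group of the corresponding matrix algebra over $\widetilde{A}$ trivialising it, together with the algebra elements appearing along the path. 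Since each step appends only countably many generators, $B_{n+1}$ remains separable.

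Then I would set $C := \overline{\bigcup_n B_n}$ and verify that $C$ has property $P$. The main obstacle is ensuring that the witnesses added at stage $n+1$ retain their certifying role after passage to the limit, not just for elements of $B_n$ but for all of $C$. For real rank zero and stable rank one, a standard density-and-approximation argument handles this because any element of $C$ is norm-close to an element of some $B_n$, for which a suitable invertible (self-adjoint) witness was already appended. For the $K$-theoretic statements, the argument uses continuity of $K_*$ under inductive limits a second time, now applied to the chain $(B_n)$: every class in $K_*(C)$ is represented by data from some $B_n$, and the concrete witnesses added at stage $n+1$ descend to certify the property inside $C$. These are exactly the verifications carried out in \cite[II.8.5.4, II.8.5.5]{sepinheritblackadar}.
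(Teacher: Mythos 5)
Your proposal is correct and follows essentially the same route as the paper, which simply cites Blackadar [II.8.5.4, II.8.5.5] for this proposition; the chain construction of separable intermediate subalgebras absorbing countably many witnesses at each stage, combined with continuity of $K_*$ and density arguments in the limit, is exactly the standard argument behind those references. No gaps worth flagging.
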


Note that property $(D)$ can be defined in the general context of ordered abelian groups and we are going to show that the fact that $K_0(A)$ has property $(D)$ is a separably inheritable property. Note that by passing to unitisations, it is enough to consider the unital case.

\begin{lemma}\label{Dsepinh}
Let $A$ be a unital $\C$-algebra such that $K_0(A)$ has property $(D)$ and let $S\subset A$ be a countable subset. Then, there exists a separable $\C$-subalgebra $B$ of $A$, containing $S$, such that $K_0(B)$ has property $(D)$. Moreover, having a $K_0$-group with property $(D)$ is a separably inheritable property.
\end{lemma}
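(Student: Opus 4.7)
The plan is to build $B$ as an increasing union $B=\overline{\bigcup_n B_n}$ of separable unital $\C$-subalgebras of $A$, each containing $1_A$, where $B_{n+1}$ is chosen so that every order unit of $K_0(B_n)$ already has a property $(D)$ witness at the level of $K_0(B_{n+1})$. This is the standard separable closure pattern, but the key point to exploit is that all the $B_n$ share the common unit $1_A$, so that order-unit relations can be compared between $K_0(B_n)$ and $K_0(A)$ without ambiguity. As mentioned in the statement, by passing to unitisations I may assume $A$ is unital.

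In detail, I would first let $B_0$ be the separable unital $\C$-subalgebra of $A$ generated by $S\cup\{1_A\}$. Inductively, given $B_n$, I note that $K_0(B_n)$ is countable; I enumerate its order units as $(x_{n,k})_{k\in\N}$. Because $1_{B_n}=1_A$, the relation $m\cdot x_{n,k}\ge[1_{B_n}]$ witnessing that $x_{n,k}$ is an order unit of $K_0(B_n)$ pushes forward to show that $\iota_*(x_{n,k})$ is an order unit of $K_0(A)$. Property $(D)$ of $K_0(A)$ then furnishes an order unit $y_{n,k}\in K_0(A)$ with $2y_{n,k}\le\iota_*(x_{n,k})$. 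I would pick a projection $q_{n,k}\in\mathbb{M}_{N_{n,k}}(A)$ representing $y_{n,k}$ together with the (finitely many) partial isometries in matrices over $A$ that witness the two relations $2[q_{n,k}]\le\iota_*(x_{n,k})$ and $M_{n,k}[q_{n,k}]\ge[1_A]$, and then let $B_{n+1}$ be the separable unital $\C$-subalgebra of $A$ generated by $B_n$ together with the matrix entries of all these projections and partial isometries (over all $k$).

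For the verification step I use continuity of $K_0$ under inductive limits: $K_0(B)=\varinjlim K_0(B_n)$. Given an order unit $x\in K_0(B)$, write $x=\iota_{n,\infty}(x_n)$ for some $n$ and $x_n\in K_0(B_n)$. The relation $m\cdot x\ge[1_B]=[1_A]$ is realised at some finite stage $n'\ge n$, i.e.\ $m\cdot\iota_{n,n'}(x_n)\ge[1_{B_{n'}}]$ in $K_0(B_{n'})$, so after replacing $n$ by $n'$ I may assume $x_n$ is already an order unit of $K_0(B_n)$ and hence equals some $x_{n,k}$. By construction, $[q_{n,k}]\in K_0(B_{n+1})$ satisfies $2[q_{n,k}]\le x_n$ and $M_{n,k}[q_{n,k}]\ge[1_{B_{n+1}}]$, so its image $y\in K_0(B)$ is the required order unit with $2y\le x$. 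The moreover clause is then completed by verifying condition (ii) of Definition \ref{defn: SepInh}: given an inductive system $(A_n,\phi_n)$ of unital $\C$-algebras with injective unital connecting maps whose $K_0$-groups have property $(D)$, the same unit-lifting principle shows that any order unit of $K_0(\varinjlim A_n)$ descends to an order unit of some $K_0(A_m)$, to which property $(D)$ of $K_0(A_m)$ can be applied directly to produce a witness that then maps into the limit.

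The main obstacle I expect is the subtle point that a witness $y_{n,k}\in K_0(A)$ from property $(D)$ of $K_0(A)$ need not automatically be an order unit of $K_0(B_{n+1})$: being an order unit is not an intrinsic property of a projection but a relational one. This is precisely why, at each stage, one must adjoin not only a projection representing $y_{n,k}$ but also the partial isometries realising the relation $M_{n,k}y_{n,k}\ge[1_A]$, and dually the relation $2y_{n,k}\le x_{n,k}$. Once this bookkeeping is installed, everything collapses to the standard closure argument.
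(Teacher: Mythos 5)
Your proposal is correct and follows essentially the same route as the paper: a standard separable exhaustion in which one enumerates the (countably many) order units of $K_0(B_n)$ at each stage, imports property $(D)$ witnesses from $K_0(A)$ together with the projections and partial isometries realising the relevant $K_0$-relations, and concludes via continuity of $K_0$. Your explicit remark that one must also adjoin the partial isometries witnessing that each imported class is an order unit (not just the subequivalence $2y\le x$) is a point the paper's write-up treats only implicitly, and you handle it correctly.
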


\begin{proof}
Since $S$ is countable, we can assume that $1_A\in S$. We can write $S=\{s_0,s_1,\ldots\}$ with $s_0=1_A$. 
Set $S_0:=\{1_A\}$ and $B_0:=\C(S_0)$. Since $K_0(A)$ has property $(D)$, there exists a projection $y_0$ in some matrix amplification of $A$ such that $2[y_0]\leq[1_A]$, $[y_0]$ is an order unit, and the subequivalence is realised by some partial isometry $v_0$. Then let $S_1$ be the set containing $S_0,s_1$, all entries of $y_0$, and all entries of $v_0$, and let $B_1=\C(S_1)$. Then $B_1$ is a separable $\C$-algebra, so $K_0(B_1)$ is a countable ordered abelian group. Since $1_A\in B_1$ induces an order unit in $K_0(B_1)$, any order unit in $K_0(B_1)$ induces a corresponding order unit in $K_0(A)$. Then, for each order unit $[x]_0$ in $K_0(B_1)$, there is an order unit $[y_1]_0$ in $K_0(A)$ such that $2[y_1]_0\leq[x_1]_0$ in $K_0(A)$ via some partial isometry $v_1$. Performing this operation for each of the countably many order units in $K_0(B_1)$, we let $S_2$ be the set containing $S_1,s_2,$ all the entries of the chosen order units and partial isometries imported from $A$. Let $B_2=\C(S_2)$. 

Inductively, we can choose increasing countable sets $S_n$ and $B_n:=\C(S_n)$. Now take $B$ to be the closure of $\bigcup_{n\geq 1}B_n$. Then $B$ is a separable $\C$-subalgebra of $A$ containing $S$. Moreover, it has property $(D)$ by construction and as any projection in $B$ is Murray-von Neumann equivalent to a projection in some $B_n$.

Finally, condition (ii) in Definition \ref{defn: SepInh} is seen to be satisfied by continuity of the $K_0$-functor. Thus, having a $K_0$-group with property $(D)$ is a separably inheritable property.
\end{proof}

Then, provided that the sizes of irreducible representations of each $F_k$ are large enough, we can show that there is a separable $\C$-subalgebra of $\prod\limits_{\infty} F_k$ preserving a series of $K$-theoretic properties.

\begin{prop}\label{dimgroup}
	Let $A$ be a simple separable unital non-elementary $\C$-algebra and let $(F_k)_{k \in \mathbb{N}}$ be a sequence of finite dimensional $\C$-algebras.
	Suppose $ \psi:A \to \prod\limits_{\infty} F_k$ is a unital $^*$-homomorphism.
	Then, there exists a unital separable $\C$-algebra $E$ of real rank zero and stable rank one\footnote{It is said that a unital $\C$-algebra has \emph{stable rank one} if the invertible elements are dense.} such that
	\begin{enumerate}
		
		\item $\psi(A)\subset E \subset \prod\limits_{\infty} F_k$,
		
		\item $K_0(E)$ is unperforated with the Riesz interpolation property,
		
		\item $K_0(E)$ has property (D), 
		
		\item $K_1(E)=0$.
		
	\end{enumerate}
\end{prop}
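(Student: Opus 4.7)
The plan is to show that the ambient sequence algebra $D := \prod\limits_\infty F_k$ itself already enjoys every property listed in the conclusion, and then to extract a separable $\C$-subalgebra $E\supseteq \psi(A)$ by invoking the separable inheritability machinery provided by Proposition \ref{sepinh} and Lemma \ref{Dsepinh}. The decisive preliminary point is to use the simplicity and non-elementarity of $A$ to arrange that the hypothesis of Proposition \ref{K0ultraprod} on the sizes of the matrix blocks of $F_k$ is automatically met.

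To argue the block-size growth, for each integer $R\geq 1$ I would let $q_{k,R}\in F_k$ be the central projection onto the direct sum of those matrix blocks of $F_k$ of rank strictly less than $R$, and let $q_R\in D$ be the element induced by the sequence $(q_{k,R})_k$. The corner $q_R D q_R$ is canonically identified with $\prod\limits_\infty q_{k,R} F_k q_{k,R}$, which is $(R-1)$-subhomogeneous. Since $q_R$ is central in $D$, the map $a\mapsto q_R\psi(a)$ is a $^*$-homomorphism from $A$ into $q_R D q_R$ sending $1_A$ to $q_R$. If this map were non-zero, simplicity of $A$ would force it to be injective, and $A$ would embed unitally into an $(R-1)$-subhomogeneous algebra; since $A$ is simple, this would force $A\cong \M_n$ for some $n\leq R-1$, contradicting non-elementarity. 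Hence $q_R=0$ in $D$, equivalently $q_{k,R}=0$ for all but finitely many $k$. Thus the minimum rank of an irreducible representation of $F_k$ tends to infinity with $k$, and after a harmless reindexing (which does not affect $D$) I may assume that each $F_k$ has no irreducible representation of rank less than $k$.

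With this reduction in place, Lemma \ref{lem:ultraproducts.real.ranz.zero} yields that $D$ has real rank zero, and stable rank one for $D$ follows from the proof of \cite[Lemma 1.21]{bbstww}. Proposition \ref{K0ultraprod} then supplies unperforation, the Riesz interpolation property and property $(D)$ for $K_0(D)$. The vanishing of $K_1(D)$ follows from a routine exponentiation argument: after a small perturbation, any unitary $u\in \M_n(D)$ can be represented by a sequence of honest unitaries $(u_k)_k$ in $\prod_k \M_n(F_k)$; since each $\M_n(F_k)$ is finite dimensional, one can write $u_k=\exp(ih_k)$ for some self-adjoint $h_k$ with $\|h_k\|\leq \pi$, and then $(h_k)_k$ induces a self-adjoint $h\in \M_n(D)$ with $u=\exp(ih)$, so $u$ is connected to $1$.

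Finally, real rank zero, stable rank one, unperforation of $K_0$, the Riesz interpolation property and $K_1=0$ are all separably inheritable by Proposition \ref{sepinh}, while property $(D)$ on $K_0$ is separably inheritable by Lemma \ref{Dsepinh}. A standard interlacing argument shows that the conjunction of finitely many separably inheritable properties is again separably inheritable, so one application of this machinery to the separable $\C$-subalgebra $\psi(A)\subseteq D$ produces a separable $\C$-subalgebra $E$ of $D$ containing $\psi(A)$ with all the required properties; unitality of $E$ is automatic from $1_D=\psi(1_A)\in E$. The heart of the argument, and the only place where the hypotheses on $A$ are truly used, is the block-size growth step; everything afterwards is bookkeeping, combining Proposition \ref{K0ultraprod}, Lemma \ref{lem:ultraproducts.real.ranz.zero} and the separable inheritability toolkit.
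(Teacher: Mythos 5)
Your proof is correct and follows essentially the same route as the paper: establish real rank zero, stable rank one, unperforation, Riesz interpolation, property $(D)$ and $K_1=0$ for the full sequence algebra (using simplicity and non-elementarity of $A$ to force the minimal block sizes of the $F_k$ to tend to infinity), and then cut down to a separable subalgebra containing $\psi(A)$ via the separable inheritability toolkit. Your central-projection argument for the block-size growth is a clean direct substitute for the paper's appeal to the proof of Proposition \ref{prop: largematrixblocks}, and your exponentiation argument makes explicit the fact $K_1\bigl(\prod_\infty F_k\bigr)=0$ that the paper leaves implicit.
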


\begin{proof}
Note first that $\prod\limits_{\infty}F_k$ has real rank zero by Lemma \ref{lem:ultraproducts.real.ranz.zero}, stable rank one by {\cite[Lemma 1.21]{bbstww}}; and its $K_0$-group is unperforated and satisfies the Riesz interpolation property by Proposition \ref{K0ultraprod}. 
Moreover, since $A$ is simple and non-elementary, it has no finite dimensional representations. Note that any finite dimensional irreducible representation of $\prod\limits_{\infty}F_k$ induces a finite dimensional representation of $A$. Therefore, the same proof as in Proposition \ref{prop: largematrixblocks} shows that, up to reindexing, $F_k$ has no irreducible representations of rank less than $k$. Hence $K_0\Big(\prod\limits_{\infty}F_k\Big)$ has property $(D)$ by Proposition \ref{K0ultraprod}.

Since a countable intersection of separably inheritable properties is separably inheritable ({\cite[Proposition II.8.5.3]{sepinheritblackadar}}), by Proposition \ref{sepinh} and Proposition \ref{Dsepinh}, there is a unital separable $\C$-algebra $E$ of real rank zero and stable rank one which satisfies the required conditions. 
\end{proof}

\section{Zero dimensional $^*$-homomorphisms}\label{zerodim}

In this section we will study how $^*$-homomorphisms with nuclear dimension equal to zero act on the total invariant. We will begin with the following lemma that states that for $^*$-homomorphisms of nuclear dimension zero the second map in the approximation can be assumed to be a $^*$-homomorphism rather than an order zero map. This follows in the spirit of \cite[Theorem 3.4]{Win03} and we include it for completeness.

\begin{lemma}\label{homupwardmap}
	Let $\theta:A\to B$ be a unital $^*$-homomorphism between unital separable $\C$-algebras with $\dimnuc \theta=0$. Let $\mathcal{F}\subset A$ be a finite subset and $\epsilon>0$. Then there exist a finite dimensional $\C$-algebra $F$, a ucp map $\psi:A\to F$, and a unital $^*$-homomorphism $\eta:F\to B$ such that $\|\theta(x)-\eta \circ \psi(x)\|< \epsilon$ for all $x\in\mathcal{F}$.
\end{lemma}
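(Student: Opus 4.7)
The plan is to leverage the structure theorem for cpc order zero maps from finite-dimensional $\C$-algebras (\cite{orderzero}) to convert the upward approximating maps into honest unital $^*$-homomorphisms, at the cost of a small perturbation. Because the domain $F_k$ is finite-dimensional and the positive contraction $\eta_k(1_{F_k})$ will be forced to be close to $1_B$, the associated supporting $^*$-homomorphism will already take values in $B$ rather than merely in a multiplier algebra.

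First, I would invoke Definition \ref{defdim} together with Remark \ref{remark:cnsqces.def.nuc.dim}\eqref{rmk: NucDimitem2} to obtain a $0$-decomposable approximating system $(F_k,\psi_k,\eta_k)_{k\in\N}$ for $\theta$ in which each $\psi_k\colon A\to F_k$ is ucp and each $\eta_k\colon F_k\to B$ is cpc order zero, and such that $\|\eta_k\circ\psi_k(a)-\theta(a)\|\to 0$ for every $a\in A$. Taking $a=1_A$ yields $\eta_k(1_{F_k})=\eta_k\circ\psi_k(1_A)\to 1_B$, so for all sufficiently large $k$ the element $h_k:=\eta_k(1_{F_k})\in B_+$ is invertible and $\|h_k-1_B\|$ is as small as desired.

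Next, by the structure theorem for order zero maps from finite-dimensional $\C$-algebras, there is a $^*$-homomorphism $\pi_k\colon F_k\to \mathcal{M}(C^*(\eta_k(F_k)))$ whose image commutes with $h_k$ and satisfies $\eta_k(x)=h_k\pi_k(x)=\pi_k(x)h_k$ for $x\in F_k$. Once $h_k$ is invertible in $B$, continuous functional calculus lets me define $\tilde\eta_k\colon F_k\to B$ by
\begin{equation}
\tilde\eta_k(x)\;:=\;h_k^{-1/2}\eta_k(x)h_k^{-1/2},
\end{equation}
and a direct computation using the commutation relation shows $\tilde\eta_k(x)=\pi_k(x)$, so $\tilde\eta_k$ is a $^*$-homomorphism. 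Moreover $\tilde\eta_k(1_{F_k})=h_k^{-1/2}h_k h_k^{-1/2}=1_B$, hence $\tilde\eta_k$ is unital.

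Finally, I would combine these two pieces. A standard estimate from functional calculus shows $\|\tilde\eta_k(y)-\eta_k(y)\|\le \|h_k^{-1/2}-1_B\|\cdot C_k\|y\|$ for $y\in F_k$, with $C_k$ bounded uniformly in $k$ (since $\|h_k\|\le 1$ and $\|h_k^{-1/2}\|$ stays bounded once $h_k$ is close to $1_B$), so $\|\tilde\eta_k(y)-\eta_k(y)\|$ is uniformly small on the unit ball of $F_k$ once $k$ is large. Combined with $\|\eta_k\circ\psi_k(x)-\theta(x)\|<\epsilon/2$ for $x\in\mathcal{F}$, this gives
\begin{equation}
\|\tilde\eta_k\circ\psi_k(x)-\theta(x)\|\le\|\tilde\eta_k(\psi_k(x))-\eta_k(\psi_k(x))\|+\|\eta_k\circ\psi_k(x)-\theta(x)\|<\epsilon,
\end{equation}
for all $x\in\mathcal{F}$. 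Setting $F:=F_k$, $\psi:=\psi_k$, and $\eta:=\tilde\eta_k$ for such a sufficiently large $k$ gives the required triple. The only delicate point is justifying that $\pi_k(F_k)\subseteq B$ (not merely in a multiplier algebra), but this is immediate once $h_k$ is invertible, since then $\pi_k(x)=h_k^{-1/2}\eta_k(x)h_k^{-1/2}\in B$; after that, the argument is a perturbation exercise.
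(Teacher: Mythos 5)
Your proposal is correct and follows essentially the same route as the paper: both invoke the structure theorem for cpc order zero maps, observe that the image of the unit converges to $1_B$ and is hence invertible, and then pass to the unital $^*$-homomorphism $x\mapsto h^{-1/2}\eta(x)h^{-1/2}$ with a small perturbation estimate. The only cosmetic difference is that the paper bounds $\|\phi(1_F)\eta(\psi(x))-\eta(\psi(x))\|$ directly by $\|\phi(1_F)-1_B\|$ rather than estimating $\|h_k^{-1/2}-1_B\|$, but this is the same perturbation argument.
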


\begin{proof}
	We can assume that $1_A\in\mathcal{F}$ and all the elements in $\mathcal{F}$ are contractions. 
Since $\dimnuc\theta=0$, there exist a finite dimensional $\C$-algebra $F$, a ucp map  $\psi: A \to F$ and a cpc order zero map $\phi: F \to A$ such that
$\theta(x) \approx_{\epsilon/2} \phi \circ \psi(x)$
for all $x\in\mathcal{F}$.
	
By the structure theorem for order zero maps (\cite[Theorem 3.3]{orderzero}), there exists a unital $^*$-homomorphism $\eta:F\to \mathcal{M}(C^*(\phi(F))) \cap \{\phi(1_F)\}'$ such that 
	$$
	\phi(a)=\phi(1_F)\eta(a)
	$$ 
	for all $a\in F$.
	Note that since $\theta$ and $\psi$ are unital, $\phi(1_F) \approx_{\epsilon/2} 1_B$, which yields that $\phi(1_F)$ is invertible whenever $\epsilon$ is sufficiently small.
	Hence, $\eta$ can be considered as a unital $^*$-homomorphism into $B$ by noticing that 
	$$
	\eta(a) =\phi(1_F)^{-1/2}\phi(a)\phi(1_F)^{-1/2}.
	$$ 
	Moreover, for all $x\in\mathcal{F}$, 
	\begin{align*}
		\eta \circ \psi(x) \approx_{\epsilon/2} \phi(1_F) 	\eta \circ \psi(x) = \phi \circ \psi(x) \approx_{\epsilon/2} \theta(x).
\end{align*}
	This completes the proof.
\end{proof}

Now we state a sequence algebra version of the previous lemma.

\begin{lemma}\label{homsfactoring}
Let $\theta:A\to B$ be a unital $^*$-homomorphism between unital separable $\C$-algebras with $\dimnuc \theta=0$. Then there exist finite dimensional $\C$-algebras $(F_k)_{k \in  \mathbb{N}}$ and unital $^*$-homomorphisms $\psi:A\to \prod\limits_{\infty}F_k$, $\eta:\prod\limits_{\infty}F_k \to B_{\infty}$ such that $\iota_B\circ\theta=\eta\circ\psi.$ 
\end{lemma}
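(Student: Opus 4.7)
The strategy is to combine Lemma \ref{homupwardmap} and Lemma \ref{approxmultmaps} to produce a single approximating system in which the downward maps are approximately multiplicative \emph{and} the upward maps are genuine unital $^*$-homomorphisms, and then assemble the sequences of maps into $^*$-homomorphisms at the level of the sequence algebras.

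The first step is to obtain the approximating system. By Remark \ref{remark:cnsqces.def.nuc.dim}(ii) together with Lemma \ref{approxmultmaps}, I can fix a $0$-decomposable approximating system $(F_k,\psi_k,\phi_k)_{k\in\N}$ for $\theta$ such that each $\psi_k\colon A\to F_k$ is ucp and approximately multiplicative (so that the sequence $(\psi_k)_{k\in\N}$ induces a unital $^*$-homomorphism $\psi\colon A\to\prod\limits_{\infty}F_k$), and each $\phi_k\colon F_k\to B$ is cpc order zero. Applying the argument of Lemma \ref{homupwardmap} to each index $k$ \emph{without altering $\psi_k$}, I replace $\phi_k$ by a unital $^*$-homomorphism $\eta_k\colon F_k\to B$: since $\theta$ and $\psi_k$ are unital, $\|\mathbf{1}_B-\phi_k(1_{F_k})\|=\|\theta(1_A)-\phi_k\circ\psi_k(1_A)\|\to 0$, so for all sufficiently large $k$ the positive element $\phi_k(1_{F_k})$ is invertible and $\eta_k(a):=\phi_k(1_{F_k})^{-1/2}\phi_k(a)\phi_k(1_{F_k})^{-1/2}$ defines a unital $^*$-homomorphism (by the structure theorem for order zero maps); for the finitely many remaining indices I set $\eta_k$ to be an arbitrary unital $^*$-homomorphism, as they do not affect the induced map on $\prod\limits_{\infty}F_k$.

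The second step is the key estimate, which is built into the construction: for every $a\in A$,
\begin{equation}
\|\eta_k\circ\psi_k(a)-\phi_k\circ\psi_k(a)\|\leq\|\mathbf{1}_B-\phi_k(1_{F_k})\|\,\|a\|\longrightarrow 0,\notag
\end{equation}
and since $\|\phi_k\circ\psi_k(a)-\theta(a)\|\to 0$ by assumption, the triangle inequality gives $\|\eta_k\circ\psi_k(a)-\theta(a)\|\to 0$ for every $a\in A$.

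Finally, I assemble the maps. The sequence $(\eta_k)_{k\in\N}$ consists of contractive unital $^*$-homomorphisms, so it induces a unital $^*$-homomorphism $\eta\colon\prod\limits_{\infty}F_k\to B_{\infty}$ by $\eta([(f_k)_{k\in\N}]):=[(\eta_k(f_k))_{k\in\N}]$; well-definedness on equivalence classes is immediate from contractivity. Combining this with the unital $^*$-homomorphism $\psi\colon A\to\prod\limits_{\infty}F_k$ from the first step, the key estimate above translates into $\eta\circ\psi(a)=[(\eta_k\circ\psi_k(a))_{k\in\N}]=[(\theta(a))_{k\in\N}]=\iota_B\circ\theta(a)$ in $B_{\infty}$, which is the desired factorisation. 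The argument is essentially bookkeeping built on the previous two lemmas, so there is no real obstacle; the only mildly delicate point is verifying that the replacement of $\phi_k$ by $\eta_k$ in Lemma \ref{homupwardmap} leaves the downward map $\psi_k$ untouched, so that its approximate multiplicativity secured by Lemma \ref{approxmultmaps} is preserved.
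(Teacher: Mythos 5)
Your proposal is correct and follows exactly the paper's route: the paper's proof is the one-line instruction to combine Lemma \ref{approxmultmaps}, Remark \ref{remark:cnsqces.def.nuc.dim}(ii), and Lemma \ref{homupwardmap}, and your argument is precisely that combination spelled out, including the key observation that the perturbation of $\phi_k$ into a unital $^*$-homomorphism $\eta_k$ leaves the approximately multiplicative downward maps $\psi_k$ untouched. The bookkeeping (the norm estimate $\|\eta_k\circ\psi_k(a)-\phi_k\circ\psi_k(a)\|\leq\|1_B-\phi_k(1_{F_k})\|\,\|a\|$ and the passage to sequence algebras) is accurate.
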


\begin{proof}
This follows by combining Lemma \ref{approxmultmaps}, \eqref{rmk: NucDimitem2} of Remark \ref{remark:cnsqces.def.nuc.dim}, and Lemma \ref{homupwardmap}.
\end{proof}

We now proceed to examine how $^*$-homomorphisms with nuclear dimension equal to zero act on the total invariant.

\begin{prop}\label{K1map}
		Let $\theta:A\to B$ be a unital $^*$-homomorphism between unital separable $\C$-algebras. If $\dimnuc\theta=0$, then $K_1(\theta)=0$.
\end{prop}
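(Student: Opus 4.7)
The plan is to show that for an arbitrary unitary $u\in M_n(A)$ the class $K_1(\theta)([u]_1)=[(\theta\otimes\id_{M_n})(u)]_1$ vanishes in $K_1(M_n(B))\cong K_1(B)$; since $K_1(A)$ is generated by such classes, this will give $K_1(\theta)=0$. The key observation is that $K_1(F)=0$ for every finite-dimensional $\C$-algebra $F$, together with the standard continuity fact that two unitaries at norm distance less than $2$ represent the same class in $K_1$.

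Fix a unitary $u\in M_n(A)$. By Remark \ref{remark:cnsqces.def.nuc.dim}(iii), $\theta\otimes\id_{M_n}$ again has nuclear dimension zero. Combining Lemma \ref{approxmultmaps}, Remark \ref{remark:cnsqces.def.nuc.dim}(ii), and the perturbation argument in the proof of Lemma \ref{homupwardmap}, I will produce an approximating system $(F_k,\psi_k,\eta_k)_{k\in\mathbb{N}}$ for $\theta\otimes\id_{M_n}$ with $F_k$ finite-dimensional, $\psi_k\colon M_n(A)\to F_k$ ucp, unital and approximately multiplicative, $\eta_k\colon F_k\to M_n(B)$ a unital $^*$-homomorphism, and $\|\eta_k\circ\psi_k(x)-(\theta\otimes\id_{M_n})(x)\|\to 0$ for all $x$.

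Because $\psi_k$ is unital and approximately multiplicative, $\psi_k(u)^*\psi_k(u)$ and $\psi_k(u)\psi_k(u)^*$ converge in norm to $1_{F_k}$. Thus, for all sufficiently large $k$, continuous functional calculus (or the polar decomposition) produces an honest unitary $v_k\in F_k$ with $\|v_k-\psi_k(u)\|\to 0$. Then $\eta_k(v_k)$ is a unitary in $M_n(B)$, and
\[
\|\eta_k(v_k)-(\theta\otimes\id_{M_n})(u)\|\longrightarrow 0.
\]
Since $F_k$ is finite-dimensional, $K_1(F_k)=0$, so $[v_k]_1=0$, whence $[\eta_k(v_k)]_1=0$ in $K_1(M_n(B))$.

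For $k$ large enough the displayed distance is strictly less than $2$, and the standard continuity of $K_1$ under norm perturbations of unitaries forces
\[
K_1(\theta)([u]_1)=[(\theta\otimes\id_{M_n})(u)]_1=[\eta_k(v_k)]_1=0.
\]
As $u$ was an arbitrary unitary in an arbitrary matrix amplification, $K_1(\theta)=0$. The only mildly technical point will be extracting an approximating system whose downward maps are simultaneously approximately multiplicative and whose upward maps are genuine unital $^*$-homomorphisms; this is handled by combining the lemmas above, since the argument of Lemma \ref{homupwardmap} replaces the order-zero upward maps by unital $^*$-homomorphisms without disturbing approximate multiplicativity of the downward maps.
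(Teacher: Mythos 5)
Your proposal is correct and follows essentially the same route as the paper's proof: pass to an approximating system with approximately multiplicative ucp downward maps and unital $^*$-homomorphism upward maps, perturb $\psi_k(u)$ to a genuine unitary $v_k$ in the finite-dimensional algebra, and use triviality of $K_1$ there (the paper phrases this as connectedness of the unitary group) together with the small-perturbation stability of $K_1$ classes. The only cosmetic difference is that you work in $M_n(A)$ from the outset, whereas the paper treats $n=1$ and then invokes Remark \ref{remark:cnsqces.def.nuc.dim}\eqref{rmk: NucDimitem3} for matrix amplifications.
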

	
\begin{proof}
		Let $u\in A$ be a unitary, $\mathcal{F}=\{u,u^*,1_A\}\subset A$, and $\epsilon \geq \delta>0$. 
		By hypothesis, there exist a finite dimensional $\C$-algebra $F$ and cpc maps $\psi:A\to F$, $\eta:F\to B$ such that
		$$
		\|\eta(\psi(x))-\theta(x)\|<\delta/2, \qquad \qquad  x\in\mathcal{F}.
		$$
		By \eqref{rmk: NucDimitem2} of Remark \ref{remark:cnsqces.def.nuc.dim} and Lemma \ref{homupwardmap}, we can further assume that $\eta$ is a unital $^*$-homomorphism and $\psi$ is ucp and \emph{approximately multiplicative on $\mathcal{F}$ up to $\delta$}; i.e.\ $\|\psi(x) \psi(y) - \psi(xy)\| < \delta$ for all $x,y \in \mathcal{F}$. 
		
		We now proceed to prove that $[\theta(u)]_1=0$ in $K_1(B)$. Since $\psi$ is approximately multiplicative on $\mathcal{F}$ up to $\delta$, we obtain that
		\begin{align}
			\|\psi(u)\psi(u)^*-1_F\|<\delta \quad \text{and} \quad  \|\psi(u)^*\psi(u)-1_F\|<\delta. \notag 
		\end{align}
		By assuming that $\delta$ is sufficiently small, there exists a unitary $v\in F$ such that $\|\psi(u)-v\|<\epsilon/2$.
		Since the unitary group of a finite dimensional $\C$-algebra is connected, it follows that $v$ is homotopic to the unit $1_F$. This yields that $\eta(v)$ is homotopic to $\eta(1_F)=1_B$ in $B$. Moreover, $$\|\theta(u)-\eta(v)\|\leq \|\theta(u)-\eta \circ \psi(u)\|+\|\eta \circ \psi(u)-\eta(v)\| < \epsilon.$$
		
		Since we can assume $\epsilon<1$, this implies that $\theta(u)$ and $\eta(v)$ are homotopic unitaries in $B$. Thus, $\theta(u)$ is homotopic to $1_B$ and $[\theta(u)]_1=0$ in $K_1(B)$. 
		The same argument works for
		any unitary in any matrix amplification of $A$ since any matrix amplification of $\theta$ also has nuclear dimension equal to zero by \eqref{rmk: NucDimitem3} of Remark \ref{remark:cnsqces.def.nuc.dim}. Hence $K_1(\theta)=0$. 
	\end{proof}

     \begin{prop}\label{K1coeffmap}
	Let $\theta:A\to B$ be a unital $^*$-homomorphism between unital separable $\C$-algebras with $\dimnuc \theta=0$. Then, $K_1(\theta;\mathbb{Z}/n)=0$ for all $n\in\mathbb{N}$.
     \end{prop}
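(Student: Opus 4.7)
My plan is to adapt the argument of Proposition \ref{K1map} to the $^*$-homomorphism $\theta\otimes\id_{\mathcal{O}_{n+1}}\colon A\otimes\mathcal{O}_{n+1}\to B\otimes\mathcal{O}_{n+1}$, whose induced map on $K_1$ is, by definition, $K_1(\theta;\mathbb{Z}/n)$. First, I would tensor an $n$-decomposable approximating system for $\theta$ with $\id_{\mathcal{O}_{n+1}}$, which yields cp approximations of $\theta\otimes\id_{\mathcal{O}_{n+1}}$ factoring through $F_k\otimes\mathcal{O}_{n+1}$, with $F_k$ finite dimensional. Then, combining Lemmas \ref{approxmultmaps} and \ref{homupwardmap}, \eqref{rmk: NucDimitem2} of Remark \ref{remark:cnsqces.def.nuc.dim}, and a routine passage through the algebraic tensor product, I can arrange the downward maps $\psi_k\otimes\id$ to be ucp and approximately multiplicative on any prescribed finite subset of $A\otimes\mathcal{O}_{n+1}$, and the upward maps $\eta_k\otimes\id$ to be unital $^*$-homomorphisms.

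The new ingredient needed is that, for any finite dimensional $\C$-algebra $F$, the unitary group $U(F\otimes\mathcal{O}_{n+1})$ is connected. Writing $F=\bigoplus_i \M_{k_i}$ gives $F\otimes\mathcal{O}_{n+1}\cong \bigoplus_i \M_{k_i}(\mathcal{O}_{n+1})$, and each summand is a unital, simple, purely infinite $\C$-algebra with vanishing $K_1$. Since $K_1(D)\cong U(D)/U_0(D)$ for any unital, simple, purely infinite $\C$-algebra $D$ (a classical result of Cuntz), each summand has connected unitary group; the finite direct sum is then also connected.

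With this in place, the argument of Proposition \ref{K1map} will transfer verbatim to $\theta\otimes\id_{\mathcal{O}_{n+1}}$: for a unitary $u$ in a matrix amplification of $A\otimes\mathcal{O}_{n+1}$, the element $(\psi_k\otimes\id)(u)$ is close to a genuine unitary $v\in F_k\otimes\mathcal{O}_{n+1}$, which is homotopic to the unit; hence $(\eta_k\otimes\id)(v)$ is homotopic to $1_{B\otimes\mathcal{O}_{n+1}}$, and $(\theta\otimes\id_{\mathcal{O}_{n+1}})(u)$, being $\epsilon$-close to this homotopy-trivial unitary, represents the zero class in $K_1$. Matrix amplifications are handled just as in Proposition \ref{K1map}, by first invoking \eqref{rmk: NucDimitem3} of Remark \ref{remark:cnsqces.def.nuc.dim} to note that $\theta\otimes\id_{\M_r}$ has nuclear dimension zero, and then tensoring the resulting approximations with $\id_{\mathcal{O}_{n+1}}$; the target of each downward map is again of the form $F_k'\otimes\mathcal{O}_{n+1}$ with $F_k'$ finite dimensional, so the connected-unitary-group argument still applies.

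The main point that will require care, rather than a genuine obstacle, is the transfer of approximate multiplicativity from a finite subset of $A$ to a finite subset of $A\otimes\mathcal{O}_{n+1}$: one approximates elements of the spatial tensor product by elements of the algebraic tensor product, on which approximate multiplicativity of $\psi_k\otimes\id$ reduces directly to that of $\psi_k$. Once this bookkeeping is in place, no new phenomena arise beyond those already addressed in the proof of Proposition \ref{K1map}.
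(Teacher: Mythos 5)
Your proposal is correct and follows essentially the same route as the paper: reduce to showing $K_1(\theta\otimes\id_{\mathcal{O}_{n+1}})=0$, approximate a unitary by an element of the algebraic tensor product to transfer approximate multiplicativity of the downward maps, and use Cuntz's theorem that unitary groups of matrix amplifications of $\mathcal{O}_{n+1}$ (hence of $F\otimes\mathcal{O}_{n+1}$) are connected to conclude that the image unitary is homotopically trivial. The paper merely makes the bookkeeping explicit (the constant controlling the degree of approximate multiplicativity in terms of the number of elementary tensors and the norms $\|x_ix_j^*\|$), which you correctly flag as the only point requiring care.
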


     \begin{proof}
	For all $n\geq 2$, let $\id_{\mathcal{O}_n}$ be the identity map on the Cuntz algebra $\mathcal{O}_n$. Observe that this proof reduces to showing that $K_1(\theta \otimes \id_{\mathcal{O}_n})=0$. This will be proved using a similar strategy as the one employed in Proposition \ref{K1map}. 
	Let $u \in A \otimes \mathcal{O}_n$ be a unitary and $\epsilon \geq \delta >0$. Then there exist $(a_i)_{i=1}^k \subset A$ and $(x_i)_{i=1}^k \subset \mathcal{O}_n$ such that 
	\begin{align}
		\left\| u - \sum_{i=1}^{k} a_i \otimes x_i \right\| < \frac{\delta}{6}.
	\end{align}
	Set $\mathcal{F}=\{a_1, \ldots, a_k \}$, $C=\max_{i,j}(\|x_ix_j^*\|,1)$, and let $(F, \psi, \eta)$ be an approximation witnessing nuclear dimension equal to zero for the map $\theta$. As before, we can further assume that $\eta$ is a $^*$-homomorphism and $\psi$ is approximately multiplicative on $\mathcal{F}$ up to $\frac{\delta}{3Ck^2}$. It follows that
	\begin{align}
		\left(\psi \otimes \id_{\mathcal{O}_n}\right) (u) \left(\psi \otimes \id_{\mathcal{O}_n}\right)(u)^* & \approx_{\delta/3} \left(\psi \otimes \id_{\mathcal{O}_n}\right) \left(\sum_{i=1}^k a_i \otimes x_i  \right) \left(\psi \otimes \id_{\mathcal{O}_n}\right) \left(\sum_{i=1}^k a_i \otimes x_i  \right)^* \notag \\
		& = \sum_{i,j=1}^k \psi(a_i) \psi( a^*_j) \otimes x_i x^*_j \notag \\
		& \approx_{\delta/3} \sum_{i,j=1}^k \psi(a_ia^*_j) \otimes x_i x^*_j \notag \\
		& = \left(\psi \otimes \id_{\mathcal{O}_n}\right) \left(\sum_{i,j=1}^k a_i a^*_j \otimes x_i x^*_j \right) \notag \\
		& = \left(\psi \otimes \id_{\mathcal{O}_n}\right) \left( \left(\sum_{i=1}^k a_i \otimes x_i\right)\left(\sum_{i=1}^k a_i \otimes x_i\right)^*\right) \notag \\
		& \approx_{\delta/3} \left(\psi \otimes \id_{\mathcal{O}_n}\right)(uu^*) \notag \\
		& = 1_{F \otimes \mathcal{O}_n}.
	\end{align}
	Similarly, one can show $\left(\psi \otimes \id_{\mathcal{O}_n}\right)(u)^* \left(\psi \otimes \id_{\mathcal{O}_n}\right)(u) \approx_\delta 1_{F \otimes \mathcal{O}_n}$.
	Since we can assume $\delta$ is sufficiently small, there exists a unitary $v \in F\otimes \mathcal{O}_n$ such that $\|\left(\psi \otimes \id_{\mathcal{O}_n}\right)(u) - v\|< \epsilon/2$. It follows that
	\begin{align}
		\| \left(\theta\otimes\id_{\mathcal{O}_n}\right)(u) - \left(\eta\otimes\id_{\mathcal{O}_n}\right)(v)\| < \epsilon.
	\end{align}
	This yields that $\left(\theta\otimes\id_{\mathcal{O}_n}\right)(u)$ is homotopic to $\left(\eta\otimes\id_{\mathcal{O}_n}\right)(v)$. 
	
	On the other hand, the unitary group of any matrix amplification of $\mathcal{O}_n$ is connected (\cite[Theorem 1.9]{Cuntz}), so the unitary group  of $F\otimes \mathcal{O}_n$ is also connected. Moreover, $K_1(F\otimes\mathcal{O}_n)=0$, so $\left(\eta\otimes\id_{\mathcal{O}_n}\right)(v)$ is homotopic to $\left(\eta\otimes\id_{\mathcal{O}_n}\right)(1_{F\otimes \mathcal{O}_n}) = 1_{B \otimes \mathcal{O}_n}$.
	By transitivity, we obtain that $u$ is homotopic to $1_{B\otimes \mathcal{O}_n}$ and this implies $[\left(\theta\otimes\id_{\mathcal{O}_n}\right)(u)]_1=0$. As in the proof of Proposition \ref{K1map}, a similar argument holds for matrix amplifications, which in turn implies $K_1(\theta; \mathbb{Z}/n)= 0$.
     \end{proof}
     
     \begin{prop}\label{K0coeffmap}
     Let $A$ and $B$ be separable $\C$-algebras and $\theta:A\to B$ a $^*$-homomorphism with $K_1(\theta)=0$. Then the image of $K_0(\theta; \mathbb{Z}/n)$ is contained into $K_0(B)\otimes \mathbb{Z}/n$ for all $n\in\mathbb{N}$. In particular, this holds if $\theta$ is unital and $\dimnuc\theta=0$.
     \end{prop}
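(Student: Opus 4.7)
The plan is to deduce this from naturality of the universal coefficient short exact sequence \eqref{Kthcoeffses}. Since that sequence is natural in the algebra, the $^*$-homomorphism $\theta: A \to B$ induces a commutative diagram
\begin{equation*}
\begin{tikzcd}
0 \ar{r} & K_0(A)\otimes \mathbb{Z}/n \ar{r}\ar{d}{K_0(\theta)\otimes \id} & K_0(A;\mathbb{Z}/n) \ar{r}\ar{d}{K_0(\theta;\mathbb{Z}/n)} & \Tor(K_1(A),\mathbb{Z}/n) \ar{r}\ar{d}{\Tor(K_1(\theta),\id)} & 0\\
0 \ar{r} & K_0(B)\otimes \mathbb{Z}/n \ar{r} & K_0(B;\mathbb{Z}/n) \ar{r}{\pi} & \Tor(K_1(B),\mathbb{Z}/n) \ar{r} & 0
\end{tikzcd}
\end{equation*}
with exact rows, where $\pi$ denotes the surjection in the coefficient sequence for $B$.

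By functoriality of $\Tor$, the rightmost vertical arrow is induced by $K_1(\theta)$ and therefore vanishes under the hypothesis $K_1(\theta)=0$. Given any $x\in K_0(A;\mathbb{Z}/n)$, commutativity of the right-hand square gives
\[
\pi\bigl(K_0(\theta;\mathbb{Z}/n)(x)\bigr) = \Tor(K_1(\theta),\id)\bigl(\pi_A(x)\bigr) = 0,
\]
where $\pi_A$ is the top surjection. Thus $K_0(\theta;\mathbb{Z}/n)(x)$ lies in the kernel of $\pi$, which by exactness of the bottom row is precisely the image of $K_0(B)\otimes \mathbb{Z}/n$ inside $K_0(B;\mathbb{Z}/n)$. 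This proves the first assertion.

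The ``in particular'' clause is then immediate from Proposition \ref{K1map}, which guarantees $K_1(\theta)=0$ whenever $\theta$ is unital and has nuclear dimension zero. There is no real obstacle here: the argument is entirely formal, the only point to verify carefully being the naturality of \eqref{Kthcoeffses}, which is standard and is the reason the Bockstein framework is set up in this form in \cite{classif}.
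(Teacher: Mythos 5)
Your proposal is correct and is essentially identical to the paper's own proof: both use naturality of the coefficient sequence \eqref{Kthcoeffses} to obtain the commutative diagram with exact rows and conclude that the composite into $\Tor(K_1(B),\mathbb{Z}/n)$ vanishes, so the image of $K_0(\theta;\mathbb{Z}/n)$ lands in $K_0(B)\otimes\mathbb{Z}/n$. Your version is if anything slightly more explicit, correctly identifying the rightmost vertical arrow as $\Tor(K_1(\theta),\id)$ where the paper simply writes $K_1(\theta)$.
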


\begin{proof}
At the level of $K_0$-groups with coefficients, by  \eqref{Kthcoeffses}, we obtain the following short exact sequences with the maps induced by the morphism $\theta$ making each square commute
\[
\begin{adjustbox}{max width=\textwidth}
\begin{tikzcd}
0 \ar{r} & K_0(A)\otimes \mathbb{Z}/n \ar{r}\ar{d}{K_0(\theta)\otimes \id} & K_0(A;\mathbb{Z}/n) \ar{r}\ar{d}{K_0(\theta;\mathbb{Z}/n)} & \Tor(K_1(A), \mathbb{Z}/n) \ar{r}\ar{d}{K_1(\theta)} & 0 \\
0 \ar{r} & K_0(B)\otimes \mathbb{Z}/n \ar{r} & K_0(B;\mathbb{Z}/n) \ar{r}{q_B} & \Tor(K_1(B), \mathbb{Z}/n) \ar{r} & 0.
\end{tikzcd}
\end{adjustbox}
\]
Since $K_1(\theta)=0$, we obtain $q_B\circ K_0(\theta;\mathbb{Z}/n)=0$. This means precisely that the image of $K_0(\theta;\mathbb{Z}/n)$ is contained into $K_0(B)\otimes \mathbb{Z}/n$. The last part follows from Proposition \ref{K1map}.
\end{proof}

By combining \eqref{algK1decomp} and {\cite[Theorem 7.2]{rr0char}}, we see that a simple infinite dimensional AF-algebra has trivial Hausdorffised algebraic $K_1$-group. Therefore, any map which approximately factors through a simple AF-algebra is trivial at the level of the Hausdorffised algebraic $K_1$. Hence, to obtain our main results, the next proposition is a necessary condition.

     \begin{prop}\label{algK1map}
     Let $A$ be a simple, separable, unital, non-elementary $\C$-algebra and let $\theta:A\to B$ be a unital $^*$-homomorphism with $\dimnuc\theta=0$. Then $\overline{K_1}^{\alg}(\theta)=0$.

     \end{prop}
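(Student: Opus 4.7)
The plan is to prove this by showing that for every unitary $u$ in any matrix amplification of $A$, the image of $u$ under the corresponding amplification of $\theta$ lies in the closure of the commutator subgroup $\overline{DU_\infty(B)}$; using Thomsen's description of $\overline{K_1}^{\alg}(B)$ as $U_\infty(B)/\overline{DU_\infty(B)}$, this will yield $\overline{K_1}^{\alg}(\theta)([u]) = 0$ for every such $u$, hence $\overline{K_1}^{\alg}(\theta) = 0$.

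First I would fix $u \in U(M_m(A))$ and observe that $\theta \otimes \id_{M_m(\mathbb{C})}$ has nuclear dimension zero by Remark~\ref{remark:cnsqces.def.nuc.dim}\eqref{rmk: NucDimitem3}, while $M_m(A)$ remains simple and non-elementary. Corollary~\ref{largematrixblocks} combined with Lemma~\ref{homupwardmap} and the approximate multiplicativity from Lemma~\ref{approxmultmaps} then produces, for any $\epsilon > 0$ and any $k \geq 2$, a finite-dimensional $\C$-algebra $F = \bigoplus_i M_{n_i}$ with every $n_i \geq k$, a ucp approximately multiplicative map $\psi: M_m(A) \to F$, and a unital $^*$-homomorphism $\eta: F \to M_m(B)$ satisfying $\eta \circ \psi(u) \approx_\epsilon (\theta \otimes \id_{M_m(\mathbb{C})})(u)$. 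The usual polar-decomposition argument used in the proof of Proposition~\ref{K1map} then shows that $\psi(u)$ is within $\epsilon$ of an actual unitary $v \in F$.

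The key step is the elementary bound that every unitary $w \in U(M_n)$ is within distance $\pi/n$ of $SU(n)$: writing $\det(w) = e^{i\phi}$ with $\phi \in (-\pi,\pi]$, the element $w \cdot e^{-i\phi/n}$ lies in $SU(n)$ and satisfies $\|w - w \cdot e^{-i\phi/n}\| = |1 - e^{-i\phi/n}| \leq \pi/n$. Applied blockwise to $F$, this produces $v_0 \in SU(F) := \bigoplus_i SU(n_i)$ with $\|v - v_0\| \leq \pi/k$. Since $SU(n)$ is perfect for $n \geq 2$ (as a connected simple compact Lie group, it equals its own commutator subgroup), we get $SU(F) \subset DU(F)$; since $\eta$ is a $^*$-homomorphism, $\eta(v_0) \in DU(M_m(B)) \subset DU_\infty(B)$. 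Chaining the approximations places $(\theta \otimes \id_{M_m(\mathbb{C})})(u)$ within $2\epsilon + \pi/k$ of $DU_\infty(B)$, and letting $\epsilon \to 0$ and $k \to \infty$ forces $(\theta \otimes \id_{M_m(\mathbb{C})})(u) \in \overline{DU_\infty(B)}$.

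The main obstacle this argument has to overcome is the ``determinant defect'' $\pi/n$ that prevents a generic unitary in $M_n$ from being a product of commutators; Corollary~\ref{largematrixblocks} is exactly what lets us drive this defect to zero, and it crucially requires that $A$ be simple and non-elementary. Once the matrix blocks of the approximating finite-dimensional $\C$-algebras are large, the rest is a routine linear-algebra estimate together with the functoriality of $DU$ under $^*$-homomorphisms.
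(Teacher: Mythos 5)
Your argument is correct, but it takes a genuinely different route from the paper. The paper factors $\iota_B\circ\theta$ \emph{exactly} through the sequence algebra $\prod_\infty F_k$ (Lemma \ref{homsfactoring}), proves that $\overline{K_1}^{\alg}\big(\prod_\infty F_k\big)=0$ by showing that $\overline{\rho(K_0)}$ is uniformly dense in $\Aff(T(\prod_\infty F_k))$ --- a $K_0$-level weak-divisibility argument combined with a Kadison-type representation theorem --- and then pulls the conclusion back to $\theta$ using the injectivity of $\overline{K_1}^{\alg}(\iota_B)$ from \cite[Lemma 3.16]{classif}. You instead work directly at the level of unitaries, using Thomsen's picture $U_\infty(B)/\overline{DU_\infty(B)}$: the $\pi/n$ bound on the distance from $U(n)$ to $SU(n)$, perfectness of $SU(n)$ for $n\geq 2$, and functoriality of commutator subgroups under the $^*$-homomorphism $\eta$. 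Both proofs pivot on exactly the same phenomenon --- Corollary \ref{largematrixblocks} forces the matrix blocks to be large, which is where simplicity and non-elementarity of $A$ enter, and which kills the determinant obstruction --- but yours is more elementary and self-contained: it needs neither the sequence-algebra machinery, nor the density-of-the-pairing computation, nor the (nontrivial) injectivity of $\overline{K_1}^{\alg}(\iota_B)$. The paper's version buys a slightly stronger intermediate fact (the vanishing of $\overline{K_1}^{\alg}$ of the sequence algebra itself), which sits naturally alongside the $K_0$-analysis of $\prod_\infty F_k$ in Section \ref{ultraprod} that drives the main theorem. Two small points to make explicit in a write-up: the three properties you need of the approximation (blocks of size at least $k$, approximate multiplicativity of $\psi$ on $\{u,u^*,1\}$, and $\eta$ a unital $^*$-homomorphism) can indeed be arranged simultaneously, since the modifications in Proposition \ref{prop: largematrixblocks} and Lemma \ref{homupwardmap} leave $F$ and $\psi$ essentially untouched; and the hypothesis $k\geq 2$ is genuinely needed so that every block satisfies $[U(n_i),U(n_i)]=SU(n_i)$.
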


     \begin{proof}
     Since $\dimnuc\theta=0$ and $\theta$ is unital, Lemma \ref{homsfactoring} produces finite dimensional $\C$-algebras $(F_k)_{k\in \mathbb{N}}$ and unital $^*$-homomorphisms $\psi:A\to \prod\limits_{\infty}F_k$, $\eta:\prod\limits_{\infty}F_k \to B_{\infty}$ such that $\iota_B\circ\theta=\eta\circ\psi$. 
     We will prove this proposition by showing that $\overline{K_1}^{\alg}\Big(\prod\limits_{\infty}F_k\Big)=0$. Crucially, by simplicity of $A$, Corollary \ref{largematrixblocks} implies that, by passing to a subsequence, we can assume that the matrix blocks of $F_k$ have size at least $k$. This is the reason that underpins the fact that the Hausdorffised algebraic $K_1$-group of $\prod\limits_{\infty}F_k$ is trivial.

     It will then follow that $\overline{K_1}^{\alg}(\iota_B)\circ\overline{K_1}^{\alg}(\theta)=\overline{K_1}^{\alg}(\iota_B\circ\theta)=0$. 
     Using that $\overline{K_1}^{\alg}(\iota_B)$ is injective, since 
     $\iota_B$ induces an injective morphism at the level of the invariants ({\cite[Lemma 3.16]{classif}}), we obtain $\overline{K_1}^{\alg}(\theta)=0$.
     
     Now, let us prove the claim.  Observe that $K_1\Big(\prod\limits_{\infty}F_k\Big)=0$; and notice that by equation \eqref{algK1decomp}, the claim will follow after checking that the image of the pairing map is uniformly dense in $\Aff \Big(T\Big(\prod\limits_{\infty} F_k\Big)\Big)$. 
     
     In order to do this, we will follow the strategy employed in the proof of \cite[Theorem 7.2]{rr0char}. 
     For convenience, let us denote $\prod\limits_{\infty}F_k$ by $E$. Observe that $\overline{\rho_E(K_0(E))}$ is closed and separates points since $E$ has real rank zero. Moreover, it contains constant functions as $E$ is unital.
     
     After checking that $\overline{\rho_E(K_0(E))}$ is a subspace of $\Aff(T(E))$, we will reach the conclusion by applying a version of Kadison's Representation Theorem given in {\cite[Proposition 3.12 (b)]{BKR92}}. 
     Let us pick a non-zero positive element $[x]_0$ in $K_0(E)$. Since a matrix amplification of a sequence algebra is the sequence algebra of matrix amplifications, $x$ is induced by a sequence of projections $p_k\in \mathbb{M}_N(F_k)$ for some $N\in\mathbb{N}$.
     Then, if $F_k$ has a decomposition into $r$ matrix blocks with sizes $n_1^{(k)},\ldots, n_r^{(k)}$, we can write $p_k$ as a sum of orthogonal projections $p_k^{(1)}\oplus\ldots\oplus p_k^{(r)}$.\footnote{Note that $r$ depends on $k$, but we supress this for ease of notation.}
 
	 Let $n \in \mathbb{N}$ 
     and let us denote by $\mathrm{tr}_{Nn_i^{(k)}}$ the unique normalised trace on $\M_{Nn_i^{(k)}}$. Thus $$
     \mathrm{tr}_{Nn_i^{(k)}}(p_k^{(i)})=%
     \frac{t_i}{Nn_i^{(k)}}
     $$ 
     for some $t_i\in\mathbb{N}$. 
     If $\lfloor t_i / n \rfloor < n$, we set $q_k^{(i)}:=p_k^{(i)}$; 
     and if $\lfloor t_i / n \rfloor \geq n$, we set $q_k^{(i)} \in \M_{Nn_i^{(k)}}$ as the diagonal matrix with the first $\lfloor t_i/n\rfloor$ entries equal to $1$ and the rest equal to $0$. 
     Then, consider $y_n:=(q_k)_{k\in \mathbb{N}}$, where $q_k:=q_k^{(1)}\oplus \ldots \oplus q_k^{(r)}$. 
     Since the sizes of the blocks go to infinity and $n$ is fixed, note that the projections $p_k^{(i)}$ will be zero on any limit trace if $\lfloor t_i / n \rfloor < n$. Thus, by construction, we obtain that 
     \begin{align}
     	n\tau(y_n)\leq \tau(x)\leq (n+1)\tau(y_n)
     \end{align} 
     for all limit traces $\tau$ on $E$.

     Finally, the convex hull of limit traces on $E$ is weak$^*$-dense in $T(E)$ (by essentially the same proof of {\cite[Theorem 1.2]{nonsillytraces}}), which yields that for all $n\in\mathbb{N}$ there exists some non-zero positive $[y_n]_0\in K_0(E)$ such that 
     \begin{align}\label{eq: WeakDiv}
     	n\tau(y_n)\leq \tau(x)\leq (n+1)\tau(y_n)
     \end{align} 
     for all traces $\tau$ on $E$. 
     
     We claim that $\overline{\rho_E(K_0(E))}$ is a subspace of $\Aff(T(E))$. Note that by linearity and density of rationals in the reals, it suffices to show that for any $[x]_0\in K_0(E)_+$ and any $k\in\N$, $\frac{1}{k}\rho_E([x]_0)\in \overline{\rho_E(K_0(E))}$. Let $\epsilon>0, k\in\mathbb{N}$, and $n\in\mathbb{N}$ be such that 
     \begin{align}\label{eq:choice.k.n}
     	\frac{1}{nk}\tau(x)<\epsilon
     \end{align}
     for any $\tau\in T(E).$ Then, by \eqref{eq: WeakDiv}, there exists $[y]_0\in K_0(E)_+$ such that 
     \begin{align}\label{eq: subspaceineq}
     		nk \tau(y)\leq \tau(x)\leq (nk+1)\tau(y)
     \end{align} for all $\tau\in T(E)$. 
 	By rearranging the inequality in \eqref{eq: subspaceineq} and dividing by $k$, we obtain 
 	\begin{align}
 		0\leq \frac{1}{k}\tau(x)-n\tau(y)\leq \frac{1}{k}\tau(y)
 	\end{align}
 	for any $\tau\in T(E)$.
     Moreover, by  the left-hand side of \eqref{eq: subspaceineq},  we have
     \begin{align}\label{eq:lhs}
     	\tau(y)\leq \frac{1}{nk}\tau(x).
     \end{align}
 	Thus, we obtain
 	\begin{align}
 			0\leq \frac{1}{k}\tau(x)-n\tau(y)\leq \frac{1}{k}\tau(y) \overset{\eqref{eq:lhs}}{<} \frac{1}{n k^2}\tau(x) \overset{\eqref{eq:choice.k.n}}{<} \epsilon
 	\end{align}
     for all $\tau\in T(E)$. Hence, we have shown that for all $[x]_0 \in K_0(E)_+$ and $\epsilon>0$ there exist $n\in\mathbb{N}$ and $[y]_0\in K_0(E)_+$ such that 
     $$0\leq \frac{1}{k}\rho_E([x]_0) - n \rho_E([y]_0)<\epsilon.$$ 
     Since $\epsilon$ was arbitrary, this yields that $\frac{1}{k}\rho_E([x]_0)\in \overline{\rho_E(K_0(E))}$. Therefore
      $\overline{\rho_E(K_0(E))}$ is a subspace of $\Aff(T(E))$ and we reach the conclusion by {\cite[Proposition 3.12 (b)]{BKR92}}. 
     \end{proof}

For a $^*$-homomorphism with nuclear dimension equal to zero, we would like to have a better understanding of its associated $K_0$-map. In particular, we note that Lemma \ref{homupwardmap} essentially gives finite dimensional subalgebras of the codomain; which in turn implies the existence of projections. In order to give a more precise description, we will need the following notion of weak divisibility.

\begin{defn}[{\cite[Definition 5.1]{AFembs}}]\label{weakdiv}
Let $A$ be a $\C$-algebra and $p$ be a non-zero projection in $A$. It is said that $A$ is \emph{weakly divisible of degree $n$} at $p$ if there is a unital $^*$-homomorphism $$\phi: \M_{n_1}\oplus \M_{n_2}\oplus \ldots \oplus \M_{n_r}\to pAp,$$ for some natural numbers $r,$ and $n_1, n_2, \ldots, n_r$ where $n_j\geq n$ for all $j$.
\end{defn}
	
This condition is equivalent to a notion of divisibility in the Murray-von Neumann semigroup $V(A)$ of $A$. Precisely, $A$ is weakly divisible of degree $n$ at $p$ if and only if there are natural numbers $r, n_1,\ldots, n_r$ and $x_1,\ldots, x_r \in V(A)$ such that $n_j\geq n$ for all $j$ and $[p]_0=n_1x_1+\ldots+n_rx_r$.

\begin{cor}\label{codomainweaklydiv}
	Let $A$ and $B$ be unital separable $\C$-algebras with $A$ simple and non-elementary.
	If $\theta:A\to B$ is a unital $^*$-homomorphism with $\dimnuc\theta=0$, then $B$ is weakly divisible of degree $n$ at $1_B$ for all $n\in \mathbb{N}$.
\end{cor}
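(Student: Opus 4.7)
The plan is to combine the large-matrix-block approximating system from Corollary \ref{largematrixblocks} with the unitalisation argument used in Lemma \ref{homupwardmap}. Fix $n\in\N$. Since $A$ is simple and non-elementary and $\dimnuc\theta=0$, Corollary \ref{largematrixblocks} produces an approximating system $(F_k,\psi_k,\eta_k)_{k\in\N}$ for $\theta$ in which every matrix block of $F_k$ has size at least $k$; in addition, by \eqref{rmk: NucDimitem2} of Remark \ref{remark:cnsqces.def.nuc.dim}, I may assume each $\psi_k$ is ucp.

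The next step is, for $k$ sufficiently large (and at least $n$), to upgrade $\eta_k$ to a genuine unital $^*$-homomorphism $\tilde\eta_k:F_k\to B$ via the structure theorem for cpc order zero maps \cite[Theorem 3.3]{orderzero}. This yields a unital $^*$-homomorphism $\sigma_k:F_k\to\mathcal{M}(\C(\eta_k(F_k)))\cap\{\eta_k(1_{F_k})\}'$ with $\eta_k(a)=\eta_k(1_{F_k})\sigma_k(a)$. Unitality of $\psi_k$ and $\theta$ gives
\[
\eta_k(1_{F_k})=\eta_k\circ\psi_k(1_A)\;\xrightarrow{k\to\infty}\;\theta(1_A)=1_B,
\]
so for all sufficiently large $k$ the element $\eta_k(1_{F_k})\in B$ is invertible in $B$, and then the formula $\tilde\eta_k(a):=\eta_k(1_{F_k})^{-1/2}\eta_k(a)\eta_k(1_{F_k})^{-1/2}$ defines a unital $^*$-homomorphism $\tilde\eta_k:F_k\to B$, exactly as in the proof of Lemma \ref{homupwardmap}.

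Finally, writing $F_k=\M_{n_1}\oplus\cdots\oplus\M_{n_r}$, the block-size bound gives $n_j\geq k\geq n$ for all $j$, so the existence of $\tilde\eta_k$ is precisely the condition of Definition \ref{weakdiv} witnessing that $B$ is weakly divisible of degree $n$ at $1_B$. Each step is a direct application of the preceding results, so I do not anticipate any substantive obstacle; the only minor subtlety is verifying that the unitalisation trick of Lemma \ref{homupwardmap} can be performed on the particular system supplied by Corollary \ref{largematrixblocks}, which follows just from the norm convergence of $\eta_k(1_{F_k})$ to $1_B$.
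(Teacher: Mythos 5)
Your proposal is correct and is essentially the paper's own argument: the paper's proof is the one-line statement that the corollary follows from Lemma \ref{homupwardmap} and Corollary \ref{largematrixblocks}, and you have simply spelled out how to run the unitalisation trick of Lemma \ref{homupwardmap} on the large-block system supplied by Corollary \ref{largematrixblocks}. The details you fill in (invertibility of $\eta_k(1_{F_k})$ for large $k$ via unitality of $\psi_k$, and the block-size bound $n_j\geq k\geq n$) are exactly the intended combination.
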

\begin{proof}
	This follows from Lemma \ref{homupwardmap} and Corollary \ref{largematrixblocks}.
\end{proof}
	
	\begin{rmk}\label{remark:explicit.weak.div}
		In fact, one can obtain a more explicit condition. Let $n\in \mathbb{N}$ and suppose that $B$ is weakly divisible of degree $m$ at $1_B$ for all $m\in\mathbb{N}$. Note that for all $m\geq n(n+2)$, there exist $r,s\in \mathbb{N}$ such that $m=rn+s(n+1)=n(r+s)+s$. Indeed,  if $m$ is $0$ modulo $n$, let $s=n$ and since $m\geq n(n+2)$, we can take $r\geq 1$ such that $m=n(r+n+1)$. Else, let $s$ be the non-zero remainder when we divide $m$ by $n$ and this automatically determines $r$. Moreover, if $B$ has cancellation of projections (for example if $B$ has stable rank one), then for all $n\in \mathbb{N}$, there exist $g^{(n)}, h^{(n)}\in K_0(B)_+$ such that $[1_B]_0=ng^{(n)}+(n+1)h^{(n)}$. Then, there exists a unital $^*$-homomorphism from $\M_n \oplus \M_{n+1}$ to $B$ by {\cite[Lemma 1.3.1]{rordambook}}.
	\end{rmk}

	If $B$ is classifiable, the condition described in Remark \ref{remark:explicit.weak.div} is connected with the property of having real rank zero. 
	Any simple, separable, unital, finite, $\mathcal{Z}$-stable $\C$-algebra with real rank zero is weakly divisible of degree $n$ at $1$ for all $n\in \mathbb{N}$ ({\cite[Theorem 7.2]{rr0char}}). 
	The converse holds in the unique trace case (\cite[Corollary 7.3]{rr0char}), but it is not true in general. For instance, one can consider a classifiable $\C$-algebra $A$ with $K_0(A)=\mathbb{Q}$, $K_1(A)=0$, $[1_A]_0=1$, and $2$ extreme traces. Since there is only one state on $K_0(A)$, projections do not separate traces, so $A$ cannot have real rank zero.

 The next proposition is the key technical step in the proof of Theorem \ref{factoringintoBinfty}. Roughly speaking, if we have a $^*$-homomorphism factoring through a sequence algebra of finite dimensional $\C$-algebras, under enough regularity assumptions, we can show that the respective $^*$-homomorphism has the behaviour of a map which factors through a simple AF-algebra at the level of total $K$-theory and traces. This proposition shows precisely how to obtain such an AF-algebra by finding a suitable dimension group inside the $K_0$-group of the given sequence algebra.

\begin{prop}\label{prop:AFalgebra.compatibility}
	Let $A$ be a simple separable unital exact non-ele-mentary and stably finite $\C$-algebra and let $(F_k)_{k \in \mathbb{N}}$ be a sequence of finite dimensional $\C$-algebras.
	Suppose $\psi: A \to \prod\limits_{\infty} F_k$ is a unital $^*$-homomorphism.
	Then there exist a simple separable unital AF-algebra $C$ with no finite dimensional representations, together with $\Lambda$-morphisms 
	$$\underline{\alpha}: \underline{K}(A) \to \underline{K}(C), \qquad \underline{\beta}: \underline{K}(C) \to \underline{K}\left( \prod\limits_{\infty} F_k \right)$$
	and affine maps 
	$$\gamma_0:  \Aff(T(A)) \to  \Aff(T(C)), \qquad \gamma_1:  \Aff(T(C)) \to  \Aff\left(T\Big(\prod\limits_{\infty} F_k\Big)\right)$$
	such that 
	\begin{align}
		\underline{K}(\psi) = \underline{\beta} \circ \underline{\alpha}, \qquad \text{and} \qquad \Aff(T(\psi)) = \gamma_1 \circ \gamma_0,
	\end{align}
	and the following diagram commutes
	\begin{equation}\label{dimgroup.diagram}
		\begin{tikzcd}
			K_0(A) \ar{r}{\alpha_0}\ar{d}{\rho_A} & K_0(C) \ar{r}{\beta_0}\ar{d}{\rho_C} & K_0\Big(\prod\limits_{\infty} F_k\Big) \ar{d}{\rho_{F_\infty}} \\
			\Aff(T(A)) \ar{r}{\gamma_0} & \Aff(T(C)) \ar{r}{\gamma_1} & \Aff\Big(T\Big(\prod\limits_{\infty} F_k\Big)\Big).
		\end{tikzcd}
	\end{equation}
\end{prop}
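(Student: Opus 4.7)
The strategy is to pass from the sequence algebra to a well-behaved separable subalgebra and realise the desired AF-algebra via its $K_0$-group. First, since $A$ is simple and non-elementary, Corollary~\ref{largematrixblocks} allows us to assume (after reindexing) that each $F_k$ has no irreducible representations of rank less than $k$. Applying Proposition~\ref{dimgroup} to $\psi$ yields a unital separable $\C$-subalgebra $E \subseteq \prod\limits_{\infty} F_k$ containing $\psi(A)$, of real rank zero and stable rank one, with $K_1(E) = 0$ and $K_0(E)$ an unperforated countable ordered group with the Riesz interpolation property and property (D). By Proposition~\ref{AFdimgroup} together with the characterisation of property (D) as absence of finite-dimensional representations (\cite[Lemma 9]{embZ}), there is a unital AF-algebra $C$ with $K_0(C) \cong K_0(E)$ as ordered groups with distinguished order unit and without finite-dimensional representations. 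The crucial refinement is to arrange that this $C$ is \emph{simple}, i.e.\ that $K_0(E)$ is a simple dimension group.

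Assuming $C$ is constructed as above, since $K_1(C) = 0 = K_1(E)$, the short exact sequences~\eqref{Kthcoeffses} force $K_0(\cdot;\mathbb{Z}/n) \cong K_0(\cdot) \otimes \mathbb{Z}/n$ and $K_1(\cdot;\mathbb{Z}/n) \cong \Tor(K_0(\cdot), \mathbb{Z}/n)$ for both $C$ and $E$; thus the isomorphism $K_0(C) \cong K_0(E)$ upgrades to an isomorphism of total K-theories $\underline{K}(C) \cong \underline{K}(E)$. Let $\iota_E\colon E \hookrightarrow \prod\limits_{\infty} F_k$ denote the inclusion, and write $\psi'\colon A \to E$ for $\psi$ with codomain restricted to $E$. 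Setting $\underline{\alpha} := \underline{K}(\psi')$ and $\underline{\beta} := \underline{K}(\iota_E)$ via the identification above yields $\underline{\beta}\circ\underline{\alpha} = \underline{K}(\psi)$. The vanishing of the $K_1$-components is consistent with Propositions~\ref{K1map} and~\ref{K1coeffmap}, and compatibility with Bockstein operations is automatic from the Bockstein sequences.

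For the trace data, I would define $\gamma_1\colon \Aff(T(C))\to \Aff(T(\prod\limits_{\infty}F_k))$ as the unique uniformly continuous extension of the composite $\rho_{F_\infty}\circ\beta_0\circ\rho_C^{-1}$; this is well-defined since $\rho_C(K_0(C))$ is uniformly dense in $\Aff(T(C))$ by \cite[Theorem 7.2]{rr0char} (applied to the simple AF-algebra $C$ without finite-dimensional representations) and any kernel element of $\rho_C$ maps to zero via $\rho_{F_\infty}\circ\beta_0$ (as states on $K_0(E)$ determine traces on $E$). Dually, $\gamma_0$ is induced by the pullback map on traces: a trace on $C$ corresponds to a state on $K_0(C)\cong K_0(E)$, which lifts to a tracial state on $E$ using real rank zero of $E$, and restricting along $\psi$ gives a trace on $A$. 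Diagram~\eqref{dimgroup.diagram} then commutes by construction, and $\gamma_1\circ\gamma_0 = \Aff(T(\psi))$ follows from agreement on the uniformly dense subset $\rho_C(K_0(C))$. The principal technical obstacle is arranging the simplicity of $K_0(E)$: this requires strengthening the separably inheritable construction in Proposition~\ref{dimgroup} to exploit the simplicity of $\psi(A)\cong A$, ensuring that every non-zero positive element of $K_0(E)$ is comparable to $[1_E]_0$ and thereby producing a simple dimension group.
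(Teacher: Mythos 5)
There is a genuine gap, and you have correctly located it yourself: the simplicity of $C$. Your proposed fix --- strengthening the separable-subalgebra construction in Proposition~\ref{dimgroup} so that $K_0(E)$ becomes a simple ordered group in its \emph{natural} order --- is not viable. The algebra $E$ must have real rank zero and sit inside $\prod_\infty F_k$, whose $K_0$-group has an abundance of non-zero positive classes that are not order units (e.g.\ classes represented by sequences of projections whose normalised traces tend to $0$); real rank zero forces $E$ to contain projections arising from spectral cutoffs of arbitrary elements of $E$, and none of the separably inheritable properties in play gives any control ensuring such classes are order units. The paper's resolution is different and is the one idea your argument is missing: keep $E$ as constructed, but \emph{re-order} its $K_0$-group, taking the new positive cone to be $K_0(E)_{++}\cup\{0\}$, the order units together with $0$. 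By \cite[Proposition 10]{embZ} (using unperforation, Riesz interpolation and property (D)), $(K_0(E), K_0(E)_{++}\cup\{0\})$ is a simple dimension group, and $C$ is the simple AF-algebra realising it. One then has to check that the maps you write down are still positive for this smaller cone: $\alpha_0=K_0(\psi_0)$ is positive because every non-zero positive element of $K_0(A)$ is an order unit (simplicity of $A$) and unital positive maps send order units to order units; $\beta_0=K_0(\iota)$ is positive because order units are in particular positive. Your identification of the total $K$-theories of $C$ and $E$ survives this re-ordering, since only the order on $K_0$ changes.

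A secondary inaccuracy: in constructing $\gamma_0$ you say a state on $K_0(C)\cong K_0(E)$ ``lifts to a tracial state on $E$ using real rank zero of $E$.'' What Blackadar's theorem \cite[Theorem 6.9.1]{blackadar} gives for a unital stably finite real rank zero algebra is an affine homeomorphism between $S(K_0(E))$ and $QT(E)$, the \emph{quasitraces}; $E$ is not assumed exact, so there is no reason these are traces on $E$. The paper's route is $T(C)\cong S(K_0(E))\cong QT(E)\to QT(A)=T(A)$, where the last equality is Haagerup's theorem and is precisely where the exactness hypothesis on $A$ is used. Your definition of $\gamma_1$ by uniformly continuous extension from $\rho_C(K_0(C))$ can be made to work but requires a norm estimate you do not supply; defining $\gamma_1$ as the dual of the restriction map $T(\prod_\infty F_k)\to QT(\prod_\infty F_k)\to QT(E)\cong T(C)$ avoids this.
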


\begin{proof}
By Proposition \ref{dimgroup}, there exists a unital separable $\C$-algebra $E$ such that $\psi(A)\subset E\subset \prod\limits_{\infty} F_k$ and $K_0(E)$ is a dimension group with Property (D). 
Let $K_0(E)_{++}$ denote the set of order units of $K_0(E)$.
Since $E$ contains the unit, it follows that $K_0(E)_{++}$ is non-empty. Then $(K_0(E), K_0(E)_{++}\cup \{0\})$ is a simple dimension group by \cite[Proposition 10]{embZ}. By Proposition \ref{AFdimgroup}, there is a simple unital AF-algebra $C$ with $(K_0(C),K_0(C)_+)=(K_0(E), K_0(E)_{++}\cup \{0\})$. We will freely identify these groups. Moreover, since $K_0(C)$ has property (D), $C$ has no finite dimensional representations by {\cite[Lemma 9]{embZ}}. To define suitable $K$-theory maps we will essentially use that, apart from the order in $K_0$, the $\C$-algebras $E$ and $C$ have the same $K$-groups and $K$-groups with coefficients.

If $\psi_0:A\to E$ is the corestriction of $\psi$, define $\alpha_0:K_0(A)\to K_0(C)$ by $\alpha_0:=K_0(\psi_0)$. As $K_0(A)$ is a simple ordered group (\cite[Corollary 6.3.6]{blackadar}), all non-zero positive elements in $K_0(A)$ are order units, so $\alpha_0$ is a well-defined ordered group homomorphism. Since all the $K_1$-groups with coefficients of $C$ vanish, we define $\alpha_1: K_1(A)\to K_1(C)$ and $\alpha_1^{(n)}:K_1(A;\mathbb{Z}/n)\to K_1(C;\mathbb{Z}/n)$ to be the corresponding zero map. It remains to define $\alpha_0^{(n)}:K_0(A;\mathbb{Z}/n)\to K_0(C;\mathbb{Z}/n)$. For this, note that $K_0(C;\mathbb{Z}/n)=K_0(C)\otimes \mathbb{Z}/n=K_0(E)\otimes \mathbb{Z}/n$. Moreover, $K_1(E)=0$ by Proposition \ref{dimgroup}, so $K_0(E;\mathbb{Z}/n)=K_0(E)\otimes \mathbb{Z}/n$ by equation \eqref{Kthcoeffses}, which in turn implies that $K_0(C;\mathbb{Z}/n)=K_0(E;\mathbb{Z}/n)$. Thus, we can set $\alpha_0^{(n)}:=K_0(\psi_0;\mathbb{Z}/n)$. Note that the absence of torsion in the $K_0$-groups yields that $K_1(C;\mathbb{Z}/n)=K_1(E;\mathbb{Z}/n)$ for all $n\geq 2$. Finally, $\underline{\alpha}$ is a well-defined $\Lambda$-morphism since $\underline{K}(\psi_0)$ is a $\Lambda$-morphism.

There is an inclusion $\iota:E\to \prod\limits_{\infty} F_k$ such that $\iota\circ\psi_0=\psi$. Define $\beta_1:=0$ and $\beta_1^{(n)}:=0$. Similarly, identifying $K_0(C)$ with $K_0(E)$ and $K_0(C;\mathbb{Z}/n)$ with $K_0(E;\mathbb{Z}/n)$, let us define $\beta_0:=K_0(\iota)$ and $\beta_0^{(n)}:=K_0(\iota;\mathbb{Z}/n)$. Likewise, $\underline{\beta}$ is a $\Lambda$-morphism because $\underline{K}(\iota)$ is a $\Lambda$-morphism. Furthermore, by construction and because composition of $\Lambda$-morphisms is a $\Lambda$-morphism, it follows that $\underline{\beta}\circ\underline{\alpha}=\underline{K}(\psi)$.

For the second part, by Proposition \ref{AFdimgroup} and 
\cite[Proposition 11]{embZ}, it follows that the space of states on $(K_0(E), K_0(E)_+)$ is affinely homeomorphic to the space of states on $(K_0(C),K_0(C)_+)$. Since $E$ is stably finite, unital, and with real rank zero, the space of states on $K_0(E)$ is affinely homeomorphic to $QT(E)$, the space of quasitraces on $E$ by {\cite[Theorem 6.9.1]{blackadar}}. As $A$ is exact, quasitraces on $A$ are traces \cite{haagerup}. Moreover, since $C$ is an AF-algebra,  states on $K_0(C)$ are traces on $C$. With these identifications, consider $\tilde{\gamma}_0$ to be given by the following sequence of maps 
\[
\begin{tikzcd}
\tilde{\gamma}_0: T(C)\ar{r}{\cong} & S(K_0(E)) \ar{r}{\cong} & QT(E) \ar{r}{(\psi_0)_*} & QT(A) = T(A).
\end{tikzcd}
\] Let $\gamma_0$ be the dual map of $\tilde{\gamma}_0$.

Set $\tilde{\gamma}_1$ to be the following sequence of maps
 \[
\begin{tikzcd}
\tilde{\gamma}_1: T\Big(\prod\limits_{\infty}F_k\Big) \ar{r}{i} & QT\Big(\prod\limits_{\infty}F_k\Big) \ar{r}{\sigma_0} & QT(E) \ar{r}{\cong} & T(C),
\end{tikzcd}
\]
where $i$ is the canonical inclusion and $\sigma_0$ is the affine map given by restriction. Likewise, let $\gamma_1$ be the dual map of $\tilde{\gamma}_1$. By construction, it follows that $\Aff(T(\psi)) = \gamma_1\circ\gamma_0$. 

Finally, note that the diagram in \eqref{dimgroup.diagram} commutes by construction. Precisely, $\alpha_0=K_0(\psi_0)$, $T(C)\cong QT(E)$, and $QT(A)=T(A)$ give that the left square commute. Similarly, by using the canonical map $\Aff\Big(QT\Big(\prod\limits_{\infty} F_k\Big)\Big)\to \Aff\Big(T\Big(\prod\limits_{\infty} F_k\Big)\Big)$, the right square commutes as $\beta_0=K_0(\iota)$ and $T(C)\cong QT(E)$. 
\end{proof}

\section{Factoring through simple AF-algebras}\label{mainresults}

We now proceed to prove the first main theorem which states that a classifiable zero dimensional $^*$-homomorphism factors through a simple AF-algebra when viewed as a map into the sequence algebra of its codomain.

\begin{proof}[Proof of Theorem \ref{factoringintoBinfty}]

If $A$ is finite dimensional then the statement is vacuously true, so let us suppose $A$ is infinite dimensional. 
Observe then that simplicity and unitality imply that $A$ has no finite dimensional representations. 
First, note that if we assume that $\iota_B \circ \theta$ factors through a simple separable AF-algebra , then $\iota_B \circ \theta$ has nuclear dimension equal to $0$ and hence $\theta$ also has nuclear dimension equal to $0$ by {\cite[Proposition $2.5$]{decomprankZstable}}. 

Conversely, let us assume $\theta$ has nuclear dimension equal to zero. By Lemma \ref{homsfactoring}, there are finite dimensional $\C$-algebras $(F_k)_{k \in \mathbb{N}}$ and unital $^*$-homomorphisms $\sigma:A\to\prod\limits_\infty F_k$, $\eta:\prod\limits_\infty F_k\to B_\infty$ such that $\iota_B \circ \theta = \eta \circ \sigma$.
By Proposition \ref{prop:AFalgebra.compatibility}, there exist a simple unital AF-algebra $C$ with no finite dimensional representations, $\Lambda$-morphisms 
$$\underline{\alpha}: \underline{K}(A) \to \underline{K}(C), \qquad \underline{\beta}: \underline{K}(C) \to \underline{K}\left( \prod\limits_{\infty} F_k \right)$$
and affine maps 
$$\gamma_0:  \Aff(T(A)) \to  \Aff(T(C)), \qquad \gamma_1:  \Aff(T(C)) \to  \Aff\left(T\Big(\prod\limits_{\infty} F_k\Big)\right)$$
such that 
\begin{align}\label{mainthm1:eq1}
	\underline{K}(\sigma) = \underline{\beta} \circ \underline{\alpha}, \qquad \text{and} \qquad \Aff(T(\sigma)) = \gamma_1 \circ \gamma_0,
\end{align}
and the diagram \eqref{dimgroup.diagram} commutes. 
Additionally, since $C$ is a simple unital infinite dimensional AF-algebra then 
$\overline{K_1}^\alg (C) =0$ by combining \cite[Theorem 7.2]{rr0char} and \eqref{algK1decomp}. 

Since the diagram \eqref{dimgroup.diagram} commutes and $\overline{K_1}^\alg (C) =0$, then the triple $(\underline{\alpha}, 0, \gamma_0)$ makes the diagram \eqref{KthTracepairing} commutative.
By Lemma \ref{lemma : classifthmrr0}, it follows that $(\underline{\alpha}, 0, \gamma_0): \underline{K}T_u(A) \to \underline{K}T_u(C)$ is a $\underline{K}T_u$-morphism, which is automatically faithful by the simplicity of $A$. Then, by Theorem \ref{classifthm} there exists a unital $^*$-homomorphism $\psi: A \to C$ realising $(\underline{\alpha}, 0, \gamma_0)$.
On the other hand, $(\underline{\beta}, 0, \gamma_1): \underline{K}T_u(C) \to \underline{K}T_u\Big( \prod\limits_{\infty} F_k \Big)$ is a $\underline{K}T_u$-morphism. Then, $\underline{K}T_u(\eta)%
\circ (\underline{\beta}, 0, \gamma_1): \underline{K}T_u(C) \to \underline{K}T_u(B_\infty)$ induces a unital $^*$-homomorphism $\varphi: C \to B_{\infty}$ by \cite[Theorem 9.1]{classif}. 

By construction and equation \eqref{mainthm1:eq1}, we obtain
\begin{align}
	\underline{K}(\varphi \circ \psi) &= \underline{K}(\varphi) \circ \underline{K}(\psi) = \underline{K}(\eta) \circ \underline{\beta} \circ \underline{K}(\psi)  \notag \\
	& = \underline{K}(\eta) \circ \underline{\beta} \circ \underline{\alpha} = \underline{K}(\eta) \circ \underline{K}(\sigma) = \underline{K}(\eta \circ \sigma) = \underline{K}(\iota_B \circ \theta).
\end{align}
Similarly,
\begin{align}
	\Aff(T(\varphi \circ \psi)) & = \Aff(T(\varphi)) \circ \Aff(T(\psi)) = \Aff(T(\eta)) \circ \gamma_1 \circ \gamma_0 \notag \\
	& = \Aff(T(\eta)) \circ \Aff(T(\sigma)) = \Aff(T(\iota_B \circ \theta)).
\end{align}
Using again that $\overline{K_1}^\alg (C) =0$, we obtain that $\overline{K_1}^\alg (\varphi \circ \psi) = 0$. On the other hand, Proposition \ref{K1map} yields that $\overline{K_1}^\alg(\iota_B \circ \theta) =0$. In summary,
\begin{align}
\underline{K}T_u(\varphi \circ \psi) = \underline{K}T_u(\iota_B \circ \theta). \notag 
\end{align}

Therefore, $\iota_B\circ\theta$ and $\varphi\circ\psi$ are unitarily equivalent by {\cite[Theorem $9.1$]{classif}}. By conjugating $\varphi$ by a unitary if necessary, we can assume that $\iota_B\circ\theta=\varphi\circ\psi$ and the conclusion follows.
\end{proof}

In general, the AF-algebra $C$ identified above can be embedded into $B_\infty$ but not into $B$. It is not clear how to find a dimension group which is factoring the $K_0$-map in general. 
However, this is not an issue if we assume that the domain or codomain has real rank zero. 
Let us proceed to prove Theorem \ref{rr0codomain}, where the codomain has real rank zero, and then we will show that a similar result holds if the domain has real rank zero instead. Let us first record a result which is well-known to experts. 

\begin{lemma}\label{lem: AFskeletonRR0}
   Let $B$ be a simple, separable, unital, nuclear, $\Z$-stable $\C$-algebra which has real rank zero, $T(B) \neq \emptyset$ and $K_0(B)$ is torsion free. Then there exists a simple, unital, infinite dimensional AF-algebra $C$ such that $K_0(C)\cong K_0(B)$ as ordered abelian groups and $T(C)\cong T(B)$.
\end{lemma}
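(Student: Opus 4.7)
The plan is to construct $C$ as the AF-algebra realizing $K_0(B)$ via the Effros--Handelman--Shen theorem, and then to identify the trace space of each algebra with the state space of this common $K_0$-group. The two ingredients are, first, verifying that $K_0(B)$ is a simple dimension group (so Proposition \ref{AFdimgroup} applies), and second, identifying $T(B)$ with $S(K_0(B), [1_B]_0)$ under the present hypotheses.

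First I would check that $(K_0(B), K_0(B)_+, [1_B]_0)$ is a countable, simple, pointed dimension group. Countability is from separability and simplicity (every nonzero positive element is an order unit) is from simplicity of $B$. For unperforation, $\mathcal{Z}$-stability ensures $K_0(B)$ is weakly unperforated, and combined with the torsion-free hypothesis this upgrades to unperforation. For Riesz interpolation, real rank zero gives Riesz decomposition for projections via \cite[Corollary 1.3]{riesz}, and since $B$ is simple, stably finite, and $\mathcal{Z}$-stable, it has stable rank one and hence cancellation of projections (\cite[Proposition 6.5.1]{blackadar}); thus the decomposition lifts to $K_0(B)$, equivalent to interpolation by \cite[Proposition 2.1]{goodearlbook}. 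Applying Proposition \ref{AFdimgroup} then furnishes a simple unital AF-algebra $C$ with $(K_0(C), K_0(C)_+, [1_C]_0) \cong (K_0(B), K_0(B)_+, [1_B]_0)$.

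To see that $C$ is infinite-dimensional, I would appeal to \cite[Theorem 7.2]{rr0char} which, under our hypotheses, gives weak divisibility of $B$ at $1_B$ of every degree. This rules out $K_0(B) \cong (\mathbb{Z}, \mathbb{N}, m)$ for any $m$, since weak divisibility of degree $n > m$ forces a decomposition $m = n_1 x_1 + \cdots + n_r x_r$ with each $x_i \geq 1$ and $n_i \geq n$, which fails. Hence the simple unital AF-algebra $C$ is not a matrix algebra, and is therefore infinite-dimensional. For the tracial statement, I would identify $T(C) \cong S(K_0(C), [1_C]_0)$ using standard AF-theory, and identify $T(B) \cong S(K_0(B), [1_B]_0)$ via \cite[Theorem 6.9.1]{blackadar}: real rank zero plus stable finiteness makes the canonical map $T(B) \to S(K_0(B), [1_B]_0)$ an affine homeomorphism onto the state space of $K_0(B)$ viewed as an ordered group, using that quasitraces on nuclear $\mathrm{C}^*$-algebras are traces by \cite{haagerup}. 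Composing with the order-isomorphism from Step~1 yields $T(B) \cong T(C)$.

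The only subtle point is the tracial identification: one must combine the Blackadar--Handelman-type passage between states and (quasi)traces with the real rank zero hypothesis to ensure injectivity of the restriction map (projections separate traces) and surjectivity (states on $K_0$ extend). All of this is standard for nuclear, $\mathcal{Z}$-stable, real rank zero $\mathrm{C}^*$-algebras, which is presumably why the authors flag the lemma as well known to experts.
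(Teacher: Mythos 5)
Your proposal is correct and follows essentially the same route as the paper: show that $(K_0(B),K_0(B)_+,[1_B]_0)$ is a simple dimension group (unperforation from $\mathcal{Z}$-stability together with torsion-freeness, interpolation from real rank zero plus cancellation), realise it by a simple unital AF-algebra via Proposition \ref{AFdimgroup}, and identify both trace spaces with $S(K_0)$ using \cite[Theorem 6.9.1]{blackadar}. The only substantive difference is cosmetic: the paper verifies unperforation directly via strict comparison of projections rather than quoting weak unperforation, and it leaves the infinite-dimensionality of $C$ implicit, whereas your weak-divisibility argument supplies that detail explicitly.
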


\begin{proof}
We first claim that $K_0(B)$ is a simple dimension group. Note that $K_0(B)$ is a simple ordered abelian group (\cite[Corollary 6.3.6]{blackadar}). 
Suppose $n[p]_0\geq 0$ for some $n\in\mathbb{N}$ and $[p]_0\in K_0(B)_+$. Since $K_0(B)$ is a simple ordered abelian group, either $n[p]_0=0$ or $n[p]_0$ is an order unit. 
In the former case, $[p]_0=0$ since $K_0(B)$ is torsion free, and in the latter, by compactness of $T(B)$ we see that there is some $\alpha>0$ such that $\tau(p^{\oplus n})>\alpha$ for all $\tau\in T(B)$. 
Therefore, $\tau(p)>0$ for all $\tau\in T(B)$, which gives that $p\geq 0$ since $B$ has strict comparison of projections with respect to tracial states (\cite[Corollary 4.6]{rr0char}). Thus, $K_0(B)$ is unperforated.

Finally, note that since $B$ has real rank zero, the projections in $B$ satisfy the Riesz decomposition with respect to the Murray-von Neumann equivalence (\cite[Corollary 1.3]{riesz}). As $B$ has stable rank one, $B$ has cancellation (\cite[Corollary 6.5.1]{blackadar}) and hence $K_0(B)$ satisfies the Riesz decomposition property. Since the Riesz decomposition property is equivalent to the Riesz interpolation property by {\cite[Proposition 2.1]{goodearlbook}}, it follows that $K_0(B)$ is a dimension group ({\cite[Theorem 1.4.4]{rordambook}}).

Then, by Proposition \ref{AFdimgroup}, there is a simple, unital infinite dimensional AF-algebra $C$ with 
$$(K_0(C), K_0(C)_+, [1_C]_0)\cong (K_0(B), K_0(B)_+, [1_B]_0).$$  As $B$ is unital, nuclear with real rank zero and stable rank one, traces on $B$ are homeomorphic to states on $K_0(B)$ ({\cite[Theorem 6.9.1]{blackadar}}). 
Likewise, traces on $C$ are homeomorphic to states on $K_0(C)$, so $T(B)\cong T(C)$.
\end{proof}

\begin{proof}[Proof of Theorem \ref{rr0codomain}]

If $\theta$ is approximately unitarily equivalent to a $^*$-homomorphism which factors through a simple AF-algebra, then $\theta$ has nuclear dimension equal to zero. On the other hand, if $\theta$ has nuclear dimension equal to zero, then $K_1(\theta)=0$ and $K_1(\theta; \mathbb{Z}/n)=0$ for all $n\geq 2$ by Proposition \ref{K1map} and Proposition \ref{K1coeffmap}, respectively. This shows that (i)$\implies$(ii)$\implies$(iii).

We will prove that (iii)$\implies$(i). Let us suppose that the $K_1$-maps with coefficients induced by $\theta$ are all trivial. We claim that $\theta$ is approximately unitarily equivalent to a $^*$-homomorphism which factors through a simple AF-algebra. By Lemma \ref{lem: AFskeletonRR0}, there is a simple unital infinite dimensional AF-algebra $C$ with 
$$(K_0(C), K_0(C)_+, [1_C]_0)\cong (K_0(B), K_0(B)_+, [1_B]_0)$$ and $T(C)\cong T(B)$. In order to simplify our proof we will freely identify these two ordered dimension groups. We claim that, up to approximate unitary equivalence, $\theta$ factors through $C$.

In order to show this, we are going to use Theorem \ref{classifthm} %
to construct unital $^*$-homomorphisms from $A$ to $C$ and from $C$
to $B$, respectively. 
We start by obtaining a $\underline{K}T_u$-morphism between the $\underline{K}T_u$-invariants of $A$
and $C$. 
Let $\alpha_0:K_0(A)\to K_0(C)$ be given by $\alpha_0:=K_0(\theta)$. Since $K_1(C)=0$, we will let $\alpha_1:=0$. At the level of the $K$-groups with
coefficients, we recall the short exact sequence in \eqref{Kthcoeffses}
\[
\begin{adjustbox}{max width=\textwidth}
\begin{tikzcd}
0 \ar{r} & K_i(C)\otimes \mathbb{Z}/n \ar{r} & K_i(C;\mathbb{Z}/n) \ar{r} & \Tor(K_{1-i}(C), \mathbb{Z}/n) \ar{r} & 0.
\end{tikzcd}
\end{adjustbox}
\]
Since $K_0(C)$ is torsion free and $K_1(C)=0$, it follows that $K_1(C;\mathbb{Z}/n)=0$, which in turn implies that we must take $\alpha_1^{(n)}:=0$ for all $n \in \mathbb{N}$.

Using the same short exact sequence from above and that $K_1(C)=0$, it follows that $$K_0(C;\mathbb{Z}/n) \cong K_0(C)\otimes \mathbb{Z}/n\cong K_0(B)\otimes\mathbb{Z}/n.$$
By Proposition \ref{K0coeffmap}, the image of $K_0(\theta; \mathbb{Z}/n)$ is contained in $K_0(B) \otimes \mathbb{Z}/n$, and after identifying $K_0(C, \mathbb{Z}/n)$ with $K_0(B) \otimes \mathbb{Z}/n$, we can define
$\alpha_0^{(n)}:K_0(A;\mathbb{Z}/n)\to K_0(C;\mathbb{Z}/n)$ as $\alpha_0^{(n)}:=K_0(\theta;\mathbb{Z}/n)$.
We denote by $\underline{\alpha}$ the collection of all the morphisms $\alpha_i, \alpha_i^{(n)}$ defined above. Since these maps are either induced by $\theta$ or are the zero map, it is immediate that they all intertwine the Bockstein maps. Hence, $\underline{\alpha}$ will be a compatible $\Lambda$-morphism at the level of total K-theory. 

Recall from \eqref{algK1decomp} that $$\overline{K_1}^{\alg}(C)\cong \Aff(T(C))/\overline{\im\rho_C}\oplus K_1(C).$$ Since $C$ is a simple unital infinite dimensional AF-algebra, it follows that $\overline{K_1}^{\alg}(C)=0$ (again by combining \eqref{algK1decomp} and {\cite[Theorem 7.2]{rr0char}}). 
Thus, the only group homomorphism from $\overline{K_1}^{\alg}(A)$ to $\overline{K_1}^{\alg}(C)$ is the zero morphism.

Recall that $T(C) \cong S(K_0(C)) \cong S(K_0(B))\cong T(B)$ (again, we will freely identify these simplices). Hence, we can define $\gamma:\Aff(T(A)) \to \Aff(T(C))$ as $\gamma:=\Aff(T(\theta))$.
It follows that the diagram \eqref{KthTracepairing} commutes, and by Lemma \ref{lemma : classifthmrr0}, 
$(\underline{\alpha},0,\gamma)$ defines a $\underline{K}T_u$-morphism from $\underline{K}T_u(A)$ to $\underline{K}T_u(C)$. Thus, there is a unital $^*$-homomorphism $\psi:A\to C$ with associated $\underline{K}T_u$-invariant equal to $(\underline{\alpha},0,\gamma)$ by Theorem \ref{classifthm}.  
Set $\beta_0:=\id_{K_0(B)}$, $\beta_1:=0$, $\beta_1^{(n)}:=0$, and $\beta_0^{(n)}:=\id_{K_0(B)\otimes\mathbb{Z}/n}$ for all $n\geq 2$. By Lemma \ref{lemma : classifthmrr0}, $(\underline{\beta}, 0, \id_{\Aff(T(B))}): \underline{K}T_u(C) \to \underline{K}T_u(B)$ gives rise to a
unital $^*$-homomorphism $\varphi:C\to B$. 
We will check now that $\theta$ and $\varphi\circ\psi$ are indeed approximately unitarily equivalent. It is clear by construction that 
\begin{align*}
\Aff(T(\varphi\circ\psi))&=\Aff(T(\varphi)) \circ \Aff(T(\theta))\\ &= \id_{\Aff(T(B))} \circ \Aff(T(\theta))\\ &=\Aff(T(\theta)).
\end{align*}
Similarly
$$K_0(\varphi\circ\psi)=K_0(\varphi) \circ K_0(\psi) = \id_{K_0(B)} \circ K_0(\theta) = K_0(\theta).$$
At the level of $K_1$-groups (with coefficients), since $K_1(C;\mathbb{Z}/n)=0$ and $K_1(C)=0$ it follows that $K_1(\varphi\circ\psi;\mathbb{Z}/n)=0$ and  $K_1(\varphi \circ \psi)=0$, respectively.
Moreover, by assumption $K_1(\theta;\mathbb{Z}/n)=0$ and $K_1(\theta)=0$.

We have then shown that $\underline{K}(\theta) = \underline{K}(\varphi \circ \psi)$ and $\Aff(T(\theta)) = \Aff(T(\varphi \circ \psi))$. By Theorem \ref{classifthm}, $\theta \sim_{\au} \varphi \circ \psi$ and the conclusion follows.
\end{proof}

A similar characterisation can also be obtained when the domain is classifiable, stably finite, and with real rank zero.

\begin{theorem}\label{rr0domain}
Let $A$ and $B$ be a simple, separable, unital, nuclear $\Z$-stable $\C$-algebras such that $A$ is stably finite, has real rank zero, satisfies the UCT, and $K_*(A)$ is torsion free.
Let $\theta:A\to B$ a unital $^*$-homomorphism. Then the following are equivalent:

\begin{enumerate}
    \item There exist a simple separable unital AF-algebra $C$ and $^*$-homomorphisms $\psi: A \to C$ and $\varphi: C \to B$ such that $\theta$ is approximately unitarily equivalent to $\varphi \circ \psi$;\label{factoring21}
    \item $\theta$ has nuclear dimension equal to zero;\label{factoring22}
    \item $K_1(\theta; \mathbb{Z}/n)=0$ for all $n\in\mathbb{N}$ and $\overline{K_1}^{\alg}(\theta)=0$.\label{factoring23}
    \end{enumerate}
\end{theorem}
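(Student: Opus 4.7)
The plan is to mirror the proof of Theorem \ref{rr0codomain}, constructing the AF skeleton from the domain $A$ rather than the codomain. The implication (i)$\implies$(ii) is immediate from the behaviour of nuclear dimension under composition, and (ii)$\implies$(iii) follows from Propositions \ref{K1coeffmap} and \ref{algK1map}. One then observes that because $A$ is simple, unital, $\Z$-stable and has real rank zero, \cite[Theorem 7.2]{rr0char} together with \eqref{algK1decomp} yields that $\cancel{a}_A: \overline{K_1}^{\alg}(A) \to K_1(A)$ is an isomorphism, so the hypothesis $\overline{K_1}^{\alg}(\theta) = 0$ actually forces $K_1(\theta) = 0$ through the last square of \eqref{KthTracepairing}.

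For (iii)$\implies$(i), I would first apply Lemma \ref{lem: AFskeletonRR0} with $A$ in place of $B$ (its hypotheses are all satisfied: $T(A) \neq \emptyset$ by stable finiteness, and $K_0(A)$ is torsion free as part of $K_*(A)$) to produce a simple unital infinite dimensional AF-algebra $C$ together with an order isomorphism $K_0(C) \cong K_0(A)$ preserving the unit class and an affine homeomorphism $T(C) \cong T(A)$; I identify these freely throughout the rest of the argument.

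Next, I would build $\psi: A \to C$ and $\varphi: C \to B$ via Theorem \ref{classifthm}. For $\psi$, I take $K_0(\psi)$ and $\gamma_0: \Aff(T(A)) \to \Aff(T(C))$ to be the chosen identifications; all remaining components of a $\underline{K}T_u$-morphism are forced to vanish since $K_1(C) = K_1(C;\mathbb{Z}/n) = \overline{K_1}^{\alg}(C) = 0$ (the Hausdorffised group vanishing by \cite[Theorem 7.2]{rr0char} and \eqref{algK1decomp}). Because $C$ has real rank zero, compatibility with \eqref{KthTracepairing} and \eqref{newpairingdiagram1} is automatic by Lemma \ref{lemma : classifthmrr0}. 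For $\varphi$, under the identifications above I take $K_0(\varphi) = K_0(\theta)$, $\Aff(T(\varphi)) = \Aff(T(\theta))$, and $K_0(\varphi;\mathbb{Z}/n) = K_0(\theta;\mathbb{Z}/n)$ (using that torsion-freeness of $K_*(A)$ gives $K_0(A;\mathbb{Z}/n) \cong K_0(A) \otimes \mathbb{Z}/n \cong K_0(C;\mathbb{Z}/n)$); all $K_1$-type components, including the Hausdorffised one, are forced to vanish because their sources are zero.

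The main obstacle is checking that these data really do assemble into a $\underline{K}T_u$-morphism $\underline{K}T_u(C) \to \underline{K}T_u(B)$. The nontrivial conditions are the middle square of \eqref{KthTracepairing}, which requires $\Th_B \circ \Aff(T(\theta)) = 0$, and diagram \eqref{newpairingdiagram1}, which requires $\zeta_B^{(n)} \circ K_0(\theta;\mathbb{Z}/n) = 0$. Both follow from the fact that $\theta$ itself induces a $\underline{K}T_u$-morphism: naturality supplies $\Th_B \circ \Aff(T(\theta)) = \overline{K_1}^{\alg}(\theta) \circ \Th_A$ and $\zeta_B^{(n)} \circ K_0(\theta;\mathbb{Z}/n) = \overline{K_1}^{\alg}(\theta) \circ \zeta_A^{(n)}$, both of which vanish by the hypothesis $\overline{K_1}^{\alg}(\theta) = 0$. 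A direct computation then shows $\underline{K}T_u(\varphi \circ \psi) = \underline{K}T_u(\theta)$ (with $K_1(\theta) = 0$ from the first paragraph and the coefficient parts matching via Proposition \ref{K0coeffmap}), so the uniqueness clause of Theorem \ref{classifthm} yields $\theta \sim_{\au} \varphi \circ \psi$.
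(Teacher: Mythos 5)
Your proposal is correct and follows essentially the same route as the paper: apply Lemma \ref{lem: AFskeletonRR0} to the domain $A$, build the two $\underline{K}T_u$-morphisms with all $K_1$-type components forced to vanish, invoke Theorem \ref{classifthm} twice, and conclude by uniqueness. Your explicit naturality check that $\Th_B\circ\Aff(T(\theta))=\overline{K_1}^{\alg}(\theta)\circ\Th_A=0$ and $\zeta_B^{(n)}\circ K_0(\theta;\mathbb{Z}/n)=\overline{K_1}^{\alg}(\theta)\circ\zeta_A^{(n)}=0$ is a slightly more careful rendering of steps the paper compresses (the paper cites \cite[Proposition 3.12]{classif} for the latter), and your derivation of $K_1(\theta)=0$ from $\overline{K_1}^{\alg}(\theta)=0$ fills in a point the paper treats as read.
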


\begin{proof}
The fact that (i) implies (ii) is clear and by combining Propositions \ref{K1coeffmap} and \ref{algK1map}, we obtain that (ii) implies (iii). It only remains to prove that (iii) implies (i). Here is where the extra real rank zero hypothesis enters into play.

Suppose that $K_1(\theta; \mathbb{Z}/n)=0$ for all $n\in\mathbb{N}$ and $\overline{K_1}^{\alg}(\theta)=0$. We claim that $\theta$ is approximately unitarily equivalent to a $^*$-homomorphism which factors through a simple unital AF-algebra. By Lemma \ref{lem: AFskeletonRR0}, there exists a simple, unital, infinite dimensional AF-algebra $C$ such that $K_0(C)\cong K_0(A)$ as ordered abelian groups and $T(A) \cong T(C)$ as Choquet simplices. We will freely identify these objects.

Our first aim is to define a $\Lambda$-morphism between $\underline{K}(A)$ and $\underline{K}(C)$.
We can take $\alpha_0 : K_0(A) \to K_0(C)$ as $\alpha_0 := \id_{K_0(C)}$ and $\alpha_1 := 0$. 
Since $K_1(A)$, and $K_1(C)$ are torsion free, equation \eqref{Kthcoeffses} yields that
\begin{align}\label{mainthm3:eq1}
	K_0(A; \mathbb{Z}/n) \cong K_0(A) \otimes \mathbb{Z}/n, \qquad K_0(C; \mathbb{Z}/n) \cong K_0(C) \otimes \mathbb{Z}/n.
\end{align}
Similarly, since $K_1(C) = 0$, equation \eqref{Kthcoeffses} yields $K_1(C;\mathbb{Z}/n)=0$. 
We can then take $\alpha_0^{(n)}: K_0(A; \mathbb{Z}/n) \to K_0(C; \mathbb{Z}/n)$ to be the identity map and $\alpha_1^{(n)}: K_1(A;\mathbb{Z}/n) \to K_1(C; \mathbb{Z}/n)$ as the zero map.

Essentially by definition, these maps intertwine the Bockstein maps and hence the collection of these maps defines a $\Lambda$-morphism $\underline{\alpha}: \underline{K}(A) \to \underline{K}(C)$.
By taking $\gamma_0 := \id_{\Aff(T(C))}$, it is immediate that the corresponding diagram \eqref{KthTracepairing} commutes. 
By Lemma \ref{lemma : classifthmrr0}, we obtain a $^*$-homomorphism $\psi: A \to C$ such that $\underline{K}(\psi) = \underline{\alpha}$ and $\Aff(T(\psi)) = \id_{\Aff(T(C))}$.

We now proceed to define a faithful morphism from the total invariant of $C$ to the total invariant of $B$. 
Since $K_0(C)\cong K_0(A)$ as ordered abelian groups and $K_1(C) =0$, we can set $\beta_i: K_i(C) \to K_i(B)$ as $\beta_0 := K_0(\theta)$ and $\beta_1 := 0$.
Recall we have already observed that $K_0(A; \mathbb{Z}/n) \cong K_0(C;\mathbb{Z}/n)$ and $K_1(C; \mathbb{Z}/n) = 0$. Then, we can define $\beta_i^{(n)}: K_i(C; \mathbb{Z}/n) \to K_i(B; \mathbb{Z})$ by 
\begin{align}
	\beta_0^{(n)}= K_0(\theta, \mathbb{Z}/n) \qquad \text{and} \qquad \beta_1^{(n)} = 0.
\end{align}
Note that $K_1(\theta)=0$ and $K_1(\theta;\mathbb{Z}/n)=0$ for all $n\geq 2$ by assumption. Therefore, the collection of maps $\underline{\beta}$ can be identified with the collection of maps $\underline{K}(\theta)$. Hence, these maps define a $\Lambda$-morphism $\underline{\beta}: \underline{K}(C) \to \underline{K}(B)$.

On the other hand, since $C$ is a simple unital infinite dimensional AF-algebra, it follows that $\overline{K_1}^\alg(C) = 0$ by \cite[Theorem 7.2]{rr0char}. Hence, we can take the map $\overline{K_1}^\alg(C) \to \overline{K_1}^\alg(B)$ to be the zero map.
Likewise, as $T(C)\cong T(A)$, 
we can set $\gamma_1:= \Aff (T(\theta)): \Aff(T(C))\to \Aff(T(B))$. It follows that the corresponding diagram \eqref{KthTracepairing} commutes. Moreover, the diagram \eqref{newpairingdiagram1} commutes automatically since $K_1(C)=0$ by \cite[Proposition 3.12]{classif}.
Then, Theorem \ref{classifthm} yields a $^*$-homomorphism $\varphi: C \to B$ such that $\underline{K}(\varphi) = \underline{\beta}$, $\overline{K_1}^\alg(\varphi)=0$ and $\Aff(T(\varphi)) = \Aff(T(\theta))$.

We will complete the proof by showing that
$\theta$ and $\varphi\circ\psi$ are approximately unitarily equivalent.
By hypothesis $K_1(\theta; \mathbb{Z}/n) =0$ and $\overline{K_1}^{\alg}(\theta)=0$. Thus, straightforward calculations yield
\begin{align}
	\underline{K}(\varphi \circ \psi) = \underline{\beta} \circ \underline{\alpha}= \underline{K}(\theta).
\end{align}
Similarly,
\begin{align}
	\Aff(T(\varphi\circ\psi)) = \Aff(T(\theta)) \circ \id_{\Aff(T(B))} = \Aff(T(\theta)).
\end{align}
Therefore, by Theorem \ref{classifthm}, we conclude that $\theta\sim_{\au}\varphi\circ\psi$.
\end{proof}

\begin{rmk}
Unlike in Theorem \ref{rr0codomain}, the proof above depends on the assumption that $K_1(A)$ is torsion free. The reason is that unless $K_1(A)$ is torsion free, $K_0(\theta;\mathbb{Z}/n)$ can potentially hit elements outside the image of $K_0(\theta)\otimes\id_{\mathbb{Z}/n}$. However, composing any map $K_0(A;\mathbb{Z}/n)\to K_0(C;\mathbb{Z}/n)$ with $K_0(C;\mathbb{Z}/n)=K_0(C)\otimes\mathbb{Z}/n\to K_0(B)\otimes\mathbb{Z}/n$ will only hit elements in the image of $K_0(\theta)\otimes\id_{\mathbb{Z}/n}$.
\end{rmk}

\section{Embeddings of $\Z$}\label{embZ}

In this last section we turn our attention to unital embeddings of $\Z$. 
By analysing the weak divisibility condition provided by Corollary \ref{codomainweaklydiv}, we will characterise embeddings of $\Z$ with nuclear dimension equal to zero.

The lemma below is the main technical result we need in order to build unital embeddings of $\mathcal{Z}$ which have nuclear dimension $0$. Essentially, one can witness quasidiagonality of $\mathcal{Z}$ via any sequence of matrix algebras with arbitrarily large sizes.
	
\begin{lemma}\label{QDofZ}
Let $\mathcal{F}\subset\mathcal{Z}$ be a finite set and $\epsilon>0$. Then there exists $N\in\mathbb{N}$ such that for all $m\geq N$ there is a ucp map $\phi_m:\mathcal{Z}\to \M_m$ which is approximately multiplicative on $\mathcal{F}$ up to $\epsilon$. 
\end{lemma}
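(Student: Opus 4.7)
The idea is to construct $\phi_m$ as a block-diagonal ucp map whose restriction to a prime dimension drop subalgebra of $\Z$ is an honest $^*$-homomorphism. The two blocks will correspond to evaluations of a dimension drop algebra $Z_{p,q}$ at $0$ and $1$, so the block sizes will be coprime multiples of $p$ and $q$; every sufficiently large $m$ is then reached via the Chicken McNugget (Frobenius) theorem.

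Recall that $\Z$ is the inductive limit of prime dimension drop algebras $Z_{p_k, q_k} \subseteq C([0,1], \M_{p_k q_k})$ with $\gcd(p_k, q_k) = 1$ and $p_k, q_k \to \infty$, where
\[
Z_{p,q} = \{ f \in C([0,1], \M_{pq}) : f(0) \in \M_p \otimes 1_q,\ f(1) \in 1_p \otimes \M_q \}.
\]
First, I would choose $k$ large enough and a finite set $\mathcal{F}' \subseteq Z_{p_k, q_k}$ so that every element of $\mathcal{F}$ is within $\epsilon/3$ of the image of some element of $\mathcal{F}'$ under the canonical embedding $Z_{p_k, q_k} \hookrightarrow \Z$. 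The two endpoint evaluations
\[
\ev_0 : Z_{p_k, q_k} \to \M_{p_k} \otimes 1_{q_k} \cong \M_{p_k}, \qquad \ev_1 : Z_{p_k, q_k} \to 1_{p_k} \otimes \M_{q_k} \cong \M_{q_k}
\]
are unital $^*$-homomorphisms; by Arveson's extension theorem they admit ucp extensions $\phi^{(0)} : \Z \to \M_{p_k}$ and $\phi^{(1)} : \Z \to \M_{q_k}$.

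Since $\gcd(p_k, q_k) = 1$, by the Chicken McNugget theorem there exists $N \in \N$, depending only on $p_k, q_k$, such that for every $m \geq N$ one can write $m = a p_k + b q_k$ with non-negative integers $a, b$. For such $m$, define
\[
\phi_m(z) := \bigl( \phi^{(0)}(z) \otimes 1_a \bigr) \oplus \bigl( \phi^{(1)}(z) \otimes 1_b \bigr),
\]
viewed inside the block-diagonal unital subalgebra $\M_{a p_k} \oplus \M_{b q_k} \subseteq \M_m$. Then $\phi_m$ is ucp, being a direct sum of matricial amplifications of ucp maps, with $\phi_m(1_\Z) = 1_{a p_k} \oplus 1_{b q_k} = 1_m$. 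Crucially, the restriction of $\phi_m$ to $Z_{p_k, q_k}$ coincides with the $^*$-homomorphism $(\ev_0 \otimes \id_{\M_a}) \oplus (\ev_1 \otimes \id_{\M_b})$, so $\phi_m$ is \emph{exactly} multiplicative on $\mathcal{F}'$. A standard three-$\epsilon$ argument, using the contractivity of $\phi_m$ and the $\epsilon/3$ approximation of $\mathcal{F}$ by $\mathcal{F}'$, then yields $\|\phi_m(xy) - \phi_m(x)\phi_m(y)\| < \epsilon$ for all $x, y \in \mathcal{F}$. The only point requiring care is the arrangement of coprime block sizes, and this is provided automatically by the Jiang--Su inductive limit; the rest is Arveson's theorem and an elementary number-theoretic observation.
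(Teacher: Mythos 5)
Your proposal is correct and follows essentially the same route as the paper: approximate $\mathcal{F}$ inside a dimension drop subalgebra, extend the two endpoint evaluations by Arveson's theorem, and assemble block-diagonal ucp maps of every sufficiently large size using coprimality of the block dimensions. The only (cosmetic) difference is that the paper works with $\mathcal{Z}_{n,n+1}$ and verifies the representability $m = rn + s(n+1)$ for $m \geq n(n+2)$ by hand rather than citing the Frobenius theorem for general coprime $p,q$; do make sure the elements of $\mathcal{F}$ are taken to be contractions and the tolerances chosen slightly smaller than $\epsilon/3$ so the final estimate closes.
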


\begin{proof}
Let $\mathcal{F}\subset\mathcal{Z}$ be finite and let $\epsilon>0$. 
We can further assume that all elements in $\mathcal{F}$ are contractions.
Since $\mathcal{Z}$ is an inductive limit of \emph{dimension drop algebras}\footnote{ $\Z_{n,n+1}=\{f\in C\left([0,1],\M_n\otimes \M_{n+1}\right): f(0)\in \M_n\otimes 1_{\M_{n+1}}, f(1)\in 1_{\M_n}\otimes \M_{n+1}\}$.} $\mathcal{Z}_{n,n+1}$ (\cite[Proposition 2.5, Theorem 2.9]{JiangSu}), there exists $n\in\mathbb{N}$ satisfying that for all $a\in \mathcal{F}$, there is a contraction $x\in\mathcal{Z}_{n,n+1}$ such that $\|a-x\|<\frac{\epsilon}{4}$. 
Denote by $\ev_0$ and $\ev_1$ the unital $^*$-homomorphisms $\ev_0:\mathcal{Z}_{n,n+1}\to \M_n$ and $\ev_1:\mathcal{Z}_{n,n+1}\to \M_{n+1}$, given by evaluation at $0$ and $1$ respectively.

By Arveson's extension theorem (see for example \cite[Corollary 1.5.16]{BO08}), we obtain ucp maps $\pi_n:\mathcal{Z}\to \M_n$ and $\pi_{n+1}:\mathcal{Z}\to \M_{n+1}$ extending $\ev_0$ and $\ev_1$, respectively. Then, for any $a,b\in\mathcal{F}$, let $x,y\in\mathcal{Z}_{n,n+1}$ be contractions such that $\|x-a\|<\frac{\epsilon}{4}$ and $\|y-b\|<\frac{\epsilon}{4}$. Then, since $\pi_n$ restricted to $\mathcal{Z}_{n,n+1}$ is a $^*$-homomorphism, a direct estimation gives that
\begin{align}
	\pi_n(a) \pi_n(b) \approx_\frac{\epsilon}{2} \pi_n(x) \pi_n(y) = \pi_n(xy) \approx_\frac{\epsilon}{2} \pi_n(ab). 
\end{align}
Hence, $\pi_{n}$ is approximately multiplicative on $\mathcal{F}$ up to $\epsilon$.
A similar argument shows
that $\pi_{n+1}$ is also approximately multiplicative on $\mathcal{F}$ up to $\epsilon$.

Take $N=n(n+2)$. As noted in Remark \ref{remark:explicit.weak.div}, for all $m\geq N$, there exist $r,s\in \mathbb{N}$ such that $m=rn+s(n+1)$. Therefore, by considering $\varphi_m: \Z \to \M_m$ as 
$\varphi_m(a) = \pi_{n}(a)^{\oplus r} \oplus \pi_{n+1}(a)^{\oplus s}$, 
we obtain a ucp map which is approximately multiplicative on $\mathcal{F}$ up to $\epsilon$.
\end{proof}

This technical lemma now allows us to completely characterise when unital embeddings of $\mathcal{Z}$ into simple, separable, unital, nuclear, $\Z$-stable $\C$-algebras have nuclear dimension equal to zero.

\begin{theorem}\label{zmorphisms}
Let $B$ be a simple, separable, unital, nuclear, $\Z$-stable $\C$-algebra and $\theta:\Z\to B$ a unital $^*$-homomorphism. Then $\dimnuc\theta=0$ if and only if $B$ is weakly divisible of degree n at $1_B$ for all $n\in\mathbb{N}$. In particular, if $B$ is not weakly divisible of degree n at $1_B$ for all $n\in\mathbb{N}$, then any unital embedding of $\Z$ into $B$ has nuclear dimension equal to one.
\end{theorem}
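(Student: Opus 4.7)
The forward direction is immediate from Corollary \ref{codomainweaklydiv}, applied with $A = \Z$, which is simple, separable, unital, and non-elementary.

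For the converse, the plan is to build an approximating system for $\theta$ whose upward maps are unital $^*$-homomorphisms. Using the weak divisibility hypothesis together with the cancellation of projections available in the $\Z$-stable algebra $B$, Remark \ref{remark:explicit.weak.div} yields, for each $k \in \mathbb{N}$, a unital $^*$-homomorphism $\rho_k: \M_k \oplus \M_{k+1} \to B$. In parallel, a diagonal use of the proof of Lemma \ref{QDofZ} over an exhausting sequence $(\mathcal{F}_k, \epsilon_k)$ of finite subsets of $\Z$ and tolerances tending to zero produces ucp maps $\psi_k = (\pi_k, \pi_{k+1}): \Z \to F_k := \M_k \oplus \M_{k+1}$ that are approximately multiplicative on $\mathcal{F}_k$ within $\epsilon_k$. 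The compositions $\phi_k := \rho_k \circ \psi_k : \Z \to B$ are then ucp, asymptotically multiplicative, and together induce a unital $^*$-homomorphism $\Phi: \Z \to B_\infty$.

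The crux of the argument is identifying $\Phi$ with $\iota_B \circ \theta$ up to unitary conjugation in $B_\infty$. I plan to invoke the uniqueness statement from \cite[Theorem 9.1]{classif} (the same result used in the proof of Theorem \ref{factoringintoBinfty}), which requires $\underline{K}T_u(\Phi) = \underline{K}T_u(\iota_B \circ \theta)$. Since $K_*(\Z) \cong K_*(\mathbb{C})$ (so all ordinary and mod-$n$ coefficient $K$-groups are generated by the class of the unit, with $K_1$ vanishing) and $\Z$ has unique trace, the $K$-theoretic components and the affine tracial map agree for both maps: they are pinned down by the image of $[1_\Z]_0$ and the constant-$1$ affine function. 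The principal subtlety, which I expect to be the main obstacle, concerns the Hausdorffised algebraic $K_1$-component: by \eqref{algK1decomp}, $\overline{K_1}^{\alg}(\Z) \cong \mathbb{R}/\mathbb{Z}$ is non-trivial. However, $K_1(\Z) = 0$ implies that the Thomsen map $\Th_\Z$ is surjective, and the commutativity of the square \eqref{KthTracepairing} forces $\overline{K_1}^{\alg}(\alpha)$ to be determined by $\Aff(T(\alpha))$ for any unital $^*$-homomorphism $\alpha: \Z \to B_\infty$; hence $\overline{K_1}^{\alg}(\Phi) = \overline{K_1}^{\alg}(\iota_B \circ \theta)$.

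Once this identification is established, lifting the intertwining unitary in $B_\infty$ to a sequence $(u_k)$ of unitaries in $B$ yields $\|u_k \phi_k(a) u_k^* - \theta(a)\| \to 0$ for every $a \in \Z$. Since each $u_k \rho_k u_k^*: F_k \to B$ is still a unital $^*$-homomorphism, the triple $(F_k, \psi_k, u_k \rho_k u_k^*)_{k \in \mathbb{N}}$ is an approximating system for $\theta$ witnessing $\dimnuc \theta = 0$. Finally, for the ``in particular'' clause: if $B$ fails weak divisibility at $1_B$ of some degree, the equivalence just proved rules out $\dimnuc \theta = 0$ for any unital $\theta: \Z \to B$; combined with $\dimnuc \theta \leq 1$ from Lemma \ref{bounddimmorphism} and the remark following it, this forces $\dimnuc \theta = 1$.
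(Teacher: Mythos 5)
Your proposal follows essentially the same route as the paper: forward direction via Corollary \ref{codomainweaklydiv}; converse by assembling approximately multiplicative ucp maps from Lemma \ref{QDofZ} with unital embeddings of finite dimensional algebras supplied by weak divisibility, identifying the resulting map with $\iota_B\circ\theta$ via the uniqueness part of \cite[Theorem 9.1]{classif} (the $\underline{K}T_u$-comparison you give — total $K$-theory pinned down by the unit, $\overline{K_1}^{\alg}$ determined by traces since $K_1(\Z)=0$ makes $\Th_\Z$ surjective — matches the paper's), and then transporting the approximating system back to $\theta$. The ``in particular'' clause is handled identically.

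One step needs repair. You reduce to $F_k=\M_k\oplus\M_{k+1}$ by invoking Remark \ref{remark:explicit.weak.div}, justified by ``the cancellation of projections available in the $\Z$-stable algebra $B$.'' $\Z$-stability does not give cancellation: the theorem's hypotheses are satisfied by purely infinite simple $\Z$-stable algebras (e.g.\ $\mathcal{O}_\infty$), which fail cancellation, and the Remark explicitly assumes it. (In the stably finite case $B$ has stable rank one and your step is fine; in the purely infinite case the embedding of $\M_k\oplus\M_{k+1}$ can still be produced, but by a different argument.) The paper sidesteps this entirely: it does not insist on the blocks $\M_k\oplus\M_{k+1}$, but takes whatever unital $^*$-homomorphism $\M_{m_1}\oplus\cdots\oplus\M_{m_r}\to B$ with all $m_i\geq N_k$ the \emph{definition} of weak divisibility hands over, which is exactly why Lemma \ref{QDofZ} is stated to produce approximately multiplicative ucp maps into $\M_m$ for \emph{every} $m\geq N$. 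If you adopt that formulation, no cancellation is needed and the rest of your argument goes through unchanged.
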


\begin{proof}
Suppose that $\dimnuc\theta=0$. By Corollary \ref{codomainweaklydiv}
we obtain that $B$ is weakly divisible of degree $n$ at $1$ for all $n\in\mathbb{N}$.

Let us prove the converse. Let $(\mathcal{F}_k)_{k\in\N}$ be an increasing family of finite sets of $\Z$ with dense union. 
For each $k \in \mathbb{N}$, Lemma \ref{QDofZ} yields a natural number $N_k$ such that for all $m \geq N_k$ there is a ucp map $\pi^{(k)}_{m}: \Z \to \mathbb M_{m}$ which is approximately multiplicative on $\mathcal{F}_k$ up to $\frac{1}{k}$. By assumption, $B$ is weakly divisible of degree $N_k$ at $1_B$ for any $k\in\mathbb{N}$. Then, there exist $m_1,\ldots,m_r \geq N_k$ and a unital $^*$-homomorphism $\eta_{k}:\M_{m_1}\oplus\ldots\oplus \M_{m_r}\to B$. 
To simplify the notation, set $F_{k} := \bigoplus\limits_{i=1}^r \M_{m_i}$. Then consider $\psi_k:= \pi^{(k)}_{m_1}\oplus \ldots \oplus \pi^{(k)}_{m_r}:\mathcal{Z}\to F_{k}$.
Denote by $\psi:\mathcal{Z}\to \prod\limits_{\infty} F_{k}$ and $\eta: \prod\limits_{\infty} F_{k} \to B_\infty$
the maps induced by $(\psi_k)_{k\in\mathbb{N}}$ and $(\eta_{k})_{k\in\mathbb{N}}$, respectively.
Observe that both maps are unital
 $^*$-homomorphisms. 

Let us consider now the unital $^*$-homomorphisms $\iota_B\circ\theta$ and $\eta\circ\psi$. 
We claim that these two maps are unitarily equivalent. Since $\mathcal{Z}$ has a unique trace, $\Aff(T(\iota_B\circ\theta))=\Aff(T(\eta\circ\psi))$. 
Both maps induce the same map on $K_0$ since they are unital and $K_0(\mathcal{Z})=\mathbb{Z}$. 
Similarly, they are equal to zero on $K_1$ since $K_1(\Z)=0$.

At the level of total K-theory, note that since $\mathcal{Z}$ is $\KK$-equivalent to $\mathbb{C}$, with the equivalence being induced by the canonical unital inclusion (\cite[Lemma 2.11]{JiangSu}), any unital embedding of $\mathcal{Z}$ induces the same class in $\KK(\mathcal{Z},B_\infty$). Since KL$(\Z,B_\infty)$ is a quotient of $\KK(\mathcal{Z},B_\infty$), it follows that any unital embedding of $\Z$ induces the same class in KL$(\Z,B_\infty)$. Then, by the universal multicoefficient theorem (\cite[Theorem 8.5]{classif}), we conclude that $\iota_B\circ\theta$ and $\eta\circ\psi$ induce the same $\Lambda$-morphism at the level of total K-theory. 

Finally, since $K_1(\mathcal{Z})=0$, the decomposition formula in \eqref{algK1decomp} shows that the Hausdorffised algebraic $K_1$-maps are completely determined by  
the trace maps. Therefore, $\overline{K_1}^{\alg}(\iota_B\circ\theta)= \overline{K_1}^{\alg}(\eta\circ\psi)$. Hence, $\iota_B\circ\theta$ and $\eta\circ\psi$ are unitarily equivalent by {\cite[Theorem 9.1]{classif}}.

By construction, $\dimnuc (\eta\circ\psi)=0$. Thus $\iota_B\circ\theta$ has nuclear dimension equal to zero as well since nuclear dimension is preserved by unitary equivalence. Therefore, $\dimnuc \theta=0$ by \cite[Proposition 2.5]{decomprankZstable}.
The last part of the statement holds from the fact that nuclear dimension of any embedding is bounded by the nuclear dimension of $\Z$ (see Lemma \ref{bounddimmorphism}).
\end{proof}

We will focus now on the nuclear dimension of unital embeddings of $\Z$ into $\Z$-stable $\C$-algebras of real rank zero.
Before proceeding, we recall the following classification theorem for unital maps with domain a strongly self-absorbing $\C$-algebra $\mathcal{D}$ that appears in \cite{TW07}.

\begin{prop}[{\cite[Corollary 1.12]{TW07}}]\label{prop:ssa.class.maps}
	Let $\mathcal{D}$ be a strongly self-absorbing $\C$-algebra and let $A$ be a $\mathcal{D}$-stable unital $\C$-algebra. Then all unital $^*$-homomorphisms from $\mathcal{D}$ to $A$ are mutually approximately unitarily equivalent.
\end{prop}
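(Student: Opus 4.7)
The plan is to show that every unital $^*$-homomorphism $\mathcal{D} \to A$ is approximately unitarily equivalent to a single canonical embedding coming from the $\mathcal{D}$-stability of $A$, and then conclude by transitivity of approximate unitary equivalence.

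The starting point is two consequences of $\mathcal{D}$ being strongly self-absorbing. Since $A$ is $\mathcal{D}$-stable, the work of Toms and Winter provides an isomorphism $\sigma \colon A \to A \otimes \mathcal{D}$ that is approximately unitarily equivalent to the first-factor inclusion $\iota_A \colon a \mapsto a \otimes 1_{\mathcal{D}}$. Applied internally to $\mathcal{D}$, combined with the fact that the flip on $\mathcal{D} \otimes \mathcal{D}$ is approximately inner, one obtains that the two inclusions $d \mapsto d \otimes 1_{\mathcal{D}}$ and $d \mapsto 1_{\mathcal{D}} \otimes d$ from $\mathcal{D}$ into $\mathcal{D} \otimes \mathcal{D}$ are approximately unitarily equivalent.

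Given a unital $^*$-homomorphism $\varphi \colon \mathcal{D} \to A$, the subalgebras $\varphi(\mathcal{D}) \otimes 1_{\mathcal{D}}$ and $1_A \otimes \mathcal{D}$ commute inside $A \otimes \mathcal{D}$, so there is a unital $^*$-homomorphism $\Phi \colon \mathcal{D} \otimes \mathcal{D} \to A \otimes \mathcal{D}$ sending $d_1 \otimes d_2$ to $\varphi(d_1) \otimes d_2$. Precomposing $\Phi$ with the two inclusions of $\mathcal{D}$ into $\mathcal{D} \otimes \mathcal{D}$ recovers $\varphi \otimes 1_{\mathcal{D}}$ and $1_A \otimes \id_{\mathcal{D}}$ respectively; since the two inclusions are approximately unitarily equivalent in $\mathcal{D} \otimes \mathcal{D}$ and $\Phi$ carries witnessing unitaries to witnessing unitaries in $A \otimes \mathcal{D}$, this shows $\varphi \otimes 1_{\mathcal{D}} \sim_{\au} 1_A \otimes \id_{\mathcal{D}}$ as maps into $A \otimes \mathcal{D}$. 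Postcomposing with the isomorphism $\sigma^{-1}$, and using that $\sigma \circ \varphi \sim_{\au} \iota_A \circ \varphi = \varphi \otimes 1_{\mathcal{D}}$ (so that $\varphi \sim_{\au} \sigma^{-1} \circ (\varphi \otimes 1_{\mathcal{D}})$), one concludes that $\varphi$ is approximately unitarily equivalent to the canonical embedding $\sigma^{-1} \circ (1_A \otimes \id_{\mathcal{D}})$. Applying the same reasoning to $\psi$ and invoking transitivity of $\sim_{\au}$ yields $\varphi \sim_{\au} \psi$.

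The main delicate point is bookkeeping across several composed approximate unitary equivalences. This is handled by the standard observation that if $\alpha \sim_{\au} \beta$ via unitaries $u_n$, then $\pi \circ \alpha \sim_{\au} \pi \circ \beta$ via the unitaries $\pi(u_n)$ for any unital $^*$-homomorphism $\pi$, and similarly that postcomposing with an automorphism or isomorphism preserves approximate unitary equivalence. The genuinely nontrivial input is that the flip on $\mathcal{D} \otimes \mathcal{D}$ is approximately inner together with the central role of $\sigma$ being approximately unitarily equivalent to $\iota_A$; both are structural features built into the theory of strongly self-absorbing $\C$-algebras, so no further obstacles appear.
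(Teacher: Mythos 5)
The paper offers no proof of this statement: it is quoted directly from Toms--Winter \cite[Corollary~1.12]{TW07} and used as a black box, so there is nothing internal to compare against. Your argument is correct and is essentially the standard proof of the cited result: the approximately inner (half-)flip pushed forward through $\varphi\otimes\id_{\mathcal{D}}$ shows that every unital $\varphi$ becomes the second-factor embedding $1_A\otimes\id_{\mathcal{D}}$ after tensoring with $1_{\mathcal{D}}$, and an isomorphism $\sigma\colon A\to A\otimes\mathcal{D}$ approximately unitarily equivalent to the first-factor inclusion transports this back to a single canonical embedding of $\mathcal{D}$ into $A$. The only point worth flagging is that the existence of such a $\sigma$ (\cite[Theorem~2.2]{TW07}, via one-sided approximate intertwining) requires $A$ to be separable, a hypothesis absent from the statement; this is harmless for the paper's applications, where all algebras are separable, and in general it can be avoided by working in $A\otimes\mathcal{D}$ and replacing $\sigma$ with $\id_A\otimes\gamma$, where $\gamma\colon\mathcal{D}\to\mathcal{D}\otimes\mathcal{D}$ is the isomorphism from the definition of strong self-absorption, which is automatically approximately unitarily equivalent to the first-factor inclusion with witnessing unitaries of the form $1_A\otimes u_n$, with no separability assumption on $A$.
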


With the previous statement in hand, we can compute the nuclear dimension of unital embeddings of $\Z$ into $\Z$-stable $\C$-algebras of real rank zero.

\begin{prop}\label{prop:Zstable.0.dim.maps}
	Let $A$ be a unital $\Z$-stable $\C$-algebra of real rank zero. Then any unital $^*$-homomorphism from $\Z$ to $A$ is approximately unitarily equivalent to a map which factors through a simple unital AF-algebra. 
	In particular, any unital $^*$-homomorphism from $\Z$ to $A$ has nuclear dimension equal to zero. 
\end{prop}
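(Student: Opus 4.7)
By Proposition~\ref{prop:ssa.class.maps}, all unital $^*$-homomorphisms $\Z \to A$ are mutually approximately unitarily equivalent, so it suffices to exhibit \emph{one} such map $\theta_0$ that factors as $\varphi\circ\psi$ with $\psi:\Z\to C$ and $\varphi:C\to A$ unital, $C$ a simple unital AF-algebra. Once this is in place, $\dimnuc(\theta_0)=0$ because $\theta_0$ factors through the AF-algebra $C$ via $^*$-homomorphisms (cf.\ Lemma~\ref{bounddimmorphism}), and hence $\dimnuc(\theta)=0$ by invariance under approximate unitary equivalence (Remark~\ref{remark:cnsqces.def.nuc.dim}(iv)).

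The starting point is that $A$ is weakly divisible of degree $n$ at $1_A$ for every $n \in \N$. Indeed, $\Z$-stability provides a unital embedding $\Z \hookrightarrow A$, and $\Z$ contains $\M_n \oplus \M_{n+1}$ unitally via its dimension-drop building blocks, so the composition yields unital $^*$-homomorphisms $\M_n\oplus\M_{n+1}\to A$. Combining this with Lemma~\ref{QDofZ} exactly as in the proof of Theorem~\ref{zmorphisms}, I construct finite dimensional $\C$-algebras $(F_k)_{k\in\N}$ whose matrix blocks grow, together with unital $^*$-homomorphisms $\sigma:\Z\to\prod\limits_\infty F_k$ and $\eta:\prod\limits_\infty F_k\to A_\infty$. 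The composition $\eta\circ\sigma$ is a unital $^*$-homomorphism $\Z\to A_\infty$ which, by Proposition~\ref{prop:ssa.class.maps} applied inside the $\Z$-stable algebra $A_\infty$, is approximately unitarily equivalent to $\iota_A\circ\theta$.

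To upgrade this to a factoring through a simple unital AF-algebra, I apply Proposition~\ref{dimgroup} to $\sigma$ (using that $\Z$ is simple, separable, unital and non-elementary) to extract a unital separable $\C$-subalgebra $E\subseteq\prod\limits_\infty F_k$ containing $\sigma(\Z)$ whose $K_0$ is a dimension group satisfying property~(D). By Proposition~\ref{AFdimgroup}, there exists a simple unital AF-algebra $C$ with $K_0(C)$ isomorphic to $K_0(E)$ as ordered groups with order unit, and property~(D) then delivers a unital embedding $\psi:\Z\hookrightarrow C$ via~\cite{embZ}. Following the template of Proposition~\ref{prop:AFalgebra.compatibility}, the ordered $K_0$-map $K_0(C)\cong K_0(E)\hookrightarrow K_0(\prod\limits_\infty F_k)\to K_0(A_\infty)$ assembles into the $K$-theoretic data of a unital $^*$-homomorphism $C\to A_\infty$ whose composition with $\psi$ is approximately unitarily equivalent to $\eta\circ\sigma$ in $A_\infty$.

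The main obstacle is descending this factoring from $A_\infty$ to $A$, i.e.\ producing an honest unital $^*$-homomorphism $\varphi:C\to A$ such that $\theta\sim_{\au}\varphi\circ\psi$. This is where the real rank zero hypothesis on $A$ is essential: combined with $\Z$-stability, it provides enough divisibility of projections in $A$ (at every corner) to realise the required $K$-theoretic data concretely in $A$, so that the prospective map $C\to A_\infty$ can be represented by a sequence of unital $^*$-homomorphisms $C\to A$, one of which supplies the desired $\varphi$. Setting $\theta_0:=\varphi\circ\psi$ and applying Proposition~\ref{prop:ssa.class.maps} one last time gives $\theta\sim_{\au}\theta_0$, completing both the factoring and the nuclear dimension statement.
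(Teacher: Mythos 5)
Your opening reduction is exactly right and is also how the paper begins: by Proposition \ref{prop:ssa.class.maps} all unital $^*$-homomorphisms $\Z\to A$ are mutually approximately unitarily equivalent, so it suffices to produce a single unital $^*$-homomorphism $\Z\to C\to A$ with $C$ a simple unital AF-algebra. From that point on, however, your argument has two genuine gaps. First, in the third paragraph you pass from the ordered $K_0$-data $K_0(C)\cong K_0(E)\hookrightarrow K_0\big(\prod_\infty F_k\big)\to K_0(A_\infty)$ to ``a unital $^*$-homomorphism $C\to A_\infty$''. There is no existence theorem available to do this here: $A$ is only assumed unital, $\Z$-stable and of real rank zero --- not simple, separable or nuclear --- so neither Theorem \ref{classifthm} nor \cite[Theorem 9.1]{classif} applies, and Proposition \ref{prop:AFalgebra.compatibility} only produces morphisms of invariants, never an actual $^*$-homomorphism. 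An Elliott-style lifting of a positive unital order-unit-preserving map out of an AF-algebra would require cancellation properties of $A_\infty$ that you have not established. Second, the final paragraph, where the map is supposed to descend from $A_\infty$ to $A$, is pure assertion: ``enough divisibility of projections \dots to realise the required $K$-theoretic data concretely in $A$'' is not an argument, and this is the only place real rank zero is even mentioned, so the hypothesis that is supposed to carry the proof is never actually put to work.

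The paper's own proof bypasses all of this. Real rank zero enters at the very first step through \cite[Corollary 12]{embZ}: Elliott and R{\o}rdam show directly that a unital $\Z$-stable $\C$-algebra of real rank zero admits a unital embedding $\varphi\colon C\to A$ of a simple separable unital infinite-dimensional AF-algebra $C$. One then takes any unital $^*$-homomorphism $\psi\colon\Z\to C$, observes that $\varphi\circ\psi$ factors through $C$ and hence has nuclear dimension zero, and concludes by Proposition \ref{prop:ssa.class.maps}. Your detour through $\prod_\infty F_k$, Proposition \ref{dimgroup} and $A_\infty$ is essentially an attempt to reprove the Elliott--R{\o}rdam embedding result from scratch, and it is precisely the two steps above (existence of the map into $A_\infty$, and the descent to $A$) where that result's actual content lives; as written, your proposal does not supply either.
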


\begin{proof}
	By \cite[Corollary 12]{embZ}, there is a simple separable unital infinite dimensional AF-algebra $C$ and a unital embedding $\varphi:C\to A$. Since $C$ is $\Z$-stable, there is a unital $^*$-homomorphism $\psi:\Z \to C$. Hence, $\varphi \circ \psi: \Z \to A$ is a unital $^*$-homomorphism with nuclear dimension equal to zero. By Proposition \ref{prop:ssa.class.maps}, any other unital $^*$-homomorphism $\Z \to A$ is approximately unitarily equivalent to $\varphi \circ \psi$; and hence, it also has nuclear dimension equal to zero. 
\end{proof}

We finish this paper by showing that if the trace space of $B$ has at most two extreme points, any unital embedding of $\mathcal{Z}$ into $B$ with nuclear dimension equal to zero is approximately unitarily equivalent to a unital $^*$-homomorphism which factors through a simple AF-algebra. The proof boils down to constructing a dimension group within $K_0(B)$ using that the unit is weakly divisible. However, even when $B$ has three extreme traces, it is not apparent how to build such a dimension group.

\begin{theorem}
Let $\theta:\mathcal{Z}\to B$ be a unital $^*$-homomorphism into a simple, separable, unital, nuclear, $\mathcal{Z}$-stable $\C$-algebra $B$ which has at most two extreme traces. Then the following are equivalent:

\begin{enumerate}
\item $\theta$ is approximately unitarily equivalent to a $^*$-homomorphism $\Z\to B$ which factors through a simple separable unital AF-algebra;
\item $\theta$ has nuclear dimension equal to zero;
\item for all $n\in\mathbb{N}$ there exist $g^{(n)},h^{(n)}\in K_0(B)_+$ such that 
\begin{align} \label{theo:2traces.eq1}
[1_B]_0=ng^{(n)}+(n+1)h^{(n)}.
\end{align}
\end{enumerate}
\end{theorem}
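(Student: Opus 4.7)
The plan is to prove (i) $\Rightarrow$ (ii) $\Rightarrow$ (iii) $\Rightarrow$ (i). The first implication is immediate from Lemma \ref{bounddimmorphism} together with the fact that $\dimnuc$ is invariant under approximate unitary equivalence (Remark \ref{remark:cnsqces.def.nuc.dim}(iv)). For (ii) $\Rightarrow$ (iii), I would apply Corollary \ref{codomainweaklydiv} to obtain weak divisibility of $B$ at $1_B$ of every degree, and then use Remark \ref{remark:explicit.weak.div} to convert this into the required $K_0$-decomposition, noting that $B$ has cancellation of projections (being either simple purely infinite or simple $\Z$-stable with stable rank one).

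For the main implication (iii) $\Rightarrow$ (i), the strategy is to construct a simple separable unital infinite dimensional AF-algebra $C$ together with a unital $^*$-homomorphism $\varphi : C \to B$. Such a $C$ is automatically $\Z$-stable, so it admits a unital $^*$-homomorphism $\psi : \Z \to C$, and then $\theta$ and $\varphi \circ \psi$ are two unital $^*$-homomorphisms from $\Z$ into the $\Z$-stable algebra $B$; by Proposition \ref{prop:ssa.class.maps} they are approximately unitarily equivalent, yielding (i). The map $\varphi$ will come from Theorem \ref{classifthm}, so it suffices to produce a faithful $\underline{K}T_u$-morphism $\underline{K}T_u(C) \to \underline{K}T_u(B)$. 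Since $C$ will have $K_1(C) = 0$, $\overline{K_1}^{\alg}(C) = 0$, and $K_*(C;\mathbb{Z}/n)$ determined by $K_0(C)$, this reduces to choosing a suitable simple dimension group $G$ with property $(D)$ (to serve as $K_0(C)$), a positive unital homomorphism $G \to K_0(B)$ sending the unit to $[1_B]_0$, and a compatible affine map on traces.

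The construction of $G$ is driven by $|\partial_e T(B)|$. If $T(B) = \emptyset$, then $B$ is Kirchberg, no trace data appears, and one builds $G$ abstractly from the $K_0$-decomposition provided by (iii); the existence of $\varphi$ then follows from the $K$-theoretic classification of morphisms between Kirchberg algebras. If $|\partial_e T(B)| = 1$ with unique trace $\tau$, let $G$ be the subgroup of $\mathbb{R}$ generated by $1$, $\tau(g^{(n)})$, and $\tau(h^{(n)})$ for all $n$: the identity $1 = n\tau(g^{(n)}) + (n+1)\tau(h^{(n)})$ forces at least one of these to lie in $(0, 1/n]$, so $G$ is a dense subgroup of $\mathbb{R}$ and hence a simple, unperforated, Riesz-interpolating dimension group with property $(D)$; the homomorphism $G \to K_0(B)$ is defined on generators. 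If $|\partial_e T(B)| = 2$, the analogous $G$ is to be constructed inside $\Aff(T(B)) \cong \mathbb{R}^2$ using the family $\{g^{(n)}, h^{(n)}\}_n \subset K_0(B)$ and strict comparison relative to the two extreme traces.

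The main obstacle is the two-trace case. Under the componentwise order on $\mathbb{R}^2$, Riesz interpolation is no longer automatic, and one must combine the different weak divisibility decompositions (for varying $n$) to produce a subgroup of $K_0(B)$ whose image is sufficiently dense in $\Aff(T(B))$ — as is needed for $\overline{K_1}^{\alg}(C) = 0$ and for the Thomsen-compatibility square \eqref{KthTracepairing} to hold — while still satisfying Riesz interpolation. The remark in the paper that this method fails for three or more extreme traces makes it clear that the two-dimensional argument already sits right at the boundary of what the technique can handle.
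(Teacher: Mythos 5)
Your treatment of (i)$\Rightarrow$(ii)$\Rightarrow$(iii) matches the paper, and your handling of the traceless and unique-trace cases of (iii)$\Rightarrow$(i) would work (the paper reaches the same conclusion slightly differently, by showing $B$ has real rank zero and invoking Proposition \ref{prop:Zstable.0.dim.maps}, which rests on the Elliott--R{\o}rdam embedding of a simple AF-algebra into any $\Z$-stable real rank zero algebra, rather than building $C$ and $\varphi$ by hand through Theorem \ref{classifthm}). The problem is the two-extreme-trace case: you correctly identify that producing a simple dimension group with Riesz interpolation inside $\mathbb{R}^2$ with the componentwise order is the crux, but you do not actually resolve it --- you describe the obstacle and stop. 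As written, the proposal is incomplete precisely where the theorem is hard.

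The paper gets around this not by constructing a two-dimensional dimension group, but by a dichotomy on the divisibility data that avoids the interpolation problem entirely. Write $\tau_1,\tau_2$ for the two extreme traces. Either $\tau_1(g^{(n)})=\tau_2(g^{(n)})$ for all but finitely many $n$, or not. In the first case the relevant elements $g^{(n)},h^{(n)}$ are constant on traces, so the subgroup $H\subseteq K_0(B)$ they generate is effectively rank one: ordering it by $\tau_1$ gives a dense subgroup of $\mathbb{R}$ with arbitrarily small positive elements, and one realises $(H,H_+,[1_B]_0)$ as the $K_0$-group of a classifiable $\C$-algebra $C$ with unique trace, which has real rank zero by R{\o}rdam's criterion; the unique-trace argument then applies to $C$. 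In the second case one shows that $\rho_B(K_0(B))$ is dense in $\mathbb{R}^2$: for any nonzero linear functional $f$ on $\mathbb{R}^2$, the identity $f(\rho_B([1_B]_0))=nf(\rho_B(g^{(n)}))+(n+1)f(\rho_B(h^{(n)}))$ together with $\tau_1(g^{(n)})\neq\tau_2(g^{(n)})$ forces $f(\rho_B(K_0(B)))$ to contain arbitrarily small nonzero elements, and Handelman's density theorem upgrades this to density of $\rho_B(K_0(B))$ in $\mathbb{R}^2$. Hence $B$ itself has real rank zero, and Proposition \ref{prop:Zstable.0.dim.maps} finishes the proof with no need to exhibit any interpolation group. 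To complete your argument you would need either this dichotomy or an actual construction of the two-dimensional dimension group, and the latter is exactly what the paper's concluding remark warns is not available by these methods beyond two traces.
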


\begin{proof}
Combining Remark \ref{remark:explicit.weak.div} and Theorem \ref{zmorphisms}, it follows that (ii) and (iii) are equivalent. Since (i) implies (ii), it suffices to check that (iii) implies (i).

If $B$ has no bounded traces, then $B$ is purely infinite ({\cite[Corollary 5.1]{rr0char}}). Since $B$ is also simple, it has real rank zero by \cite{Zha90}. Then, by Proposition \ref{prop:Zstable.0.dim.maps}, $\theta$ is approximately unitarily equivalent to a map which factors through a simple unital AF-algebra.

Suppose now that $B$ has a unique trace. Then the weak divisibility at the unit implies that $\rho_B(K_0(B))$ is a subgroup of $\mathbb{R}$ with arbitrarily small positive elements. This means that $\rho_B(K_0(B))$ is uniformly dense in $\Aff(T(B))$  and hence $B$ has real rank zero by {\cite[Theorem 7.2]{rr0char}}. As before, Proposition \ref{prop:Zstable.0.dim.maps} yields that $\theta$ is approximately unitarily equivalent to a map which factors through a simple unital AF-algebra.

Suppose that $B$ has two extreme traces $\tau_1$ and $\tau_2$. For simplicity, let us write $\tau_i$ instead of $K_0(\tau_i)$.
Let us consider first the case when
$\tau_1(g^{(n)})=\tau_2(g^{(n)})$ for all but finitely many $n\geq 2$. 
Then, by equation \eqref{theo:2traces.eq1}, the corresponding $h^{(n)}$'s must be constant in traces as well (i.e.\ $\tau_1(h^{(n)})=\tau_2(h^{(n)})$). 
We will  consider the subgroup $H \subset K_0(B)$ generated by those  $g^{(n)}$ and $h^{(n)}$ which are constant in traces. Let us equip $H$ with the order induced by $\tau_1$ and note that $H$ has arbitrarily small positive elements in trace. Take $C$ to be some classifiable $\C$-algebra with unique trace, $(K_0(C), K_0(C)_+, [1_C]_0)\cong (H,H_+, [1_B]_0)$ and $K_1(C)=0$. 
Again, since $\rho_C(K_0(C))$ is a subgroup of $\mathbb{R}$ with arbitrarily small positive elements, $C$ has real rank zero by {\cite[Theorem 7.2]{rr0char}}.
Similarly, $\theta$ is approximately unitarily equivalent to a $^*$-homomorphism factoring through $C$.
As before,by Proposition \ref{prop:Zstable.0.dim.maps} any unital embedding of $\mathcal{Z}$ into $C$ is approximately unitarily equivalent to a map which factors through a simple separable unital AF-algebra. Hence, the conclusion follows. 

We are left with the case when there are infinitely many $n\in\mathbb{N}$ such that $\tau_1(g^{(n)})\neq \tau_2(g^{(n)})$. By passing to a subsequence, we can assume that $\tau_1(g^{(n)})\neq \tau_2(g^{(n)})$ for all $n\in\mathbb{N}$. 
Observe that this immediately implies $\tau_1(h^{(n)})\neq \tau_2(h^{(n)})$  for all $n\in\mathbb{N}$. 

We claim that $\rho_B(K_0(B))$ is dense in $\mathbb{R}^2$ and hence $B$ has real rank zero by {\cite[Theorem 7.2]{rr0char}}. Let $f:\mathbb{R}^2\to\mathbb{R}$ be any non-zero linear functional. We will show that $f(\rho_B(K_0(B)))$ is dense in $\mathbb{R}$. 
Consider $\epsilon >0$ and say $f((1,0))=\alpha_1$ and $f((0,1))=\alpha_2$. Then $f(\rho_B(x))=\alpha_1\tau_1(x)+\alpha_2\tau_2(x)$ for any $x\in K_0(B)$. 
On the other hand, by equation \eqref{theo:2traces.eq1} we have
\begin{align}
	\alpha_1 + \alpha_2 = f(\rho_B ([1_B]_0)) = n f(\rho_B (g^{(n)})) + (n+1)f(\rho_B (h^{(n)})). \notag
\end{align}
If $\alpha_1 + \alpha_2 \neq 0$, then either $f(\rho_B (g^{(n)})) \neq 0$ or $f(\rho_B (h^{(n)})) \neq 0$. In both cases, if we consider the element with non-zero image and choose a large enough $n$, we obtain an element $x \in K_0(B)$ such that
\begin{align}
	0 < |f(\rho_B (x))| < \epsilon.
\end{align} 
On the other hand, if $\alpha_1+\alpha_2=0$, then $$|f(\rho_B(g^{(n)}))|=|\alpha_1(\tau_1(g^{(n)})-\tau_2(g^{(n)}))|<\epsilon$$ for $n$ large enough and it is strictly positive since $\tau_1(g^{(n)})\neq \tau_2(g^{(n)})$. 

By considering $-g^{(n)}$ or $-h^{(n)}$ if needed, we conclude that the image of $\rho_B(K_0(B))$ through $f$ is a subgroup of $\mathbb{R}$ which contains arbitrarily small positive elements and hence it is dense.
Then, it follows that $\rho_B(K_0(B))$ is dense in $\mathbb{R}^2$  by {\cite[Theorem 1.1]{densesubgroupsRn}}. Thus, $B$ has real rank zero and the result follows by Proposition \ref{prop:Zstable.0.dim.maps}.
\end{proof}

\end{document}